\documentclass[10pt]{article}

\usepackage[margin=1.05in]{geometry}
\usepackage{amsmath,amssymb,amsthm,mathtools}
\usepackage{authblk}
\usepackage{enumitem}
\usepackage{booktabs}
\usepackage{longtable}
\usepackage{array}
\usepackage{bbm}
\usepackage{dsfont}
\usepackage{hyperref}
\hypersetup{colorlinks=true,linkcolor=blue,citecolor=blue,urlcolor=blue}
\setlist{topsep=0.35em,itemsep=0.15em,parsep=0pt}

\newtheorem{theorem}{Theorem}[section]
\newtheorem{conjecture}{Conjecture}[section]
\newtheorem{lemma}[theorem]{Lemma}

\newtheorem{claim}[theorem]{Claim}

\newtheorem{assumption}[theorem]{Assumption}

\theoremstyle{definition}

\newtheorem{conclusion}[theorem]{Conclusion}

\newcommand{\ONE}{\mathrm{ONE}}

\newcommand{\F}{\mathcal F}
\newcommand{\M}{\mathcal M}

\title{{\Large\bf The Erd\H{o}s Matching Conjecture for 4-uniform hypergraphs}}
\author[1]{Jianfeng Hou\thanks{Email: \texttt{jfhou@fzu.edu.cn}}}
\author[1]{Caiyun Hu\thanks{Email: \texttt{hucaiyun.fzu@gmail.com}}}
\author[2]{Xizhi Liu\thanks{Email: \texttt{liuxizhi@ustc.edu.cn}}}
\affil[1]{\small Center for Discrete Mathematics, Fuzhou University, Fuzhou, China}
\affil[2]{\small School of Mathematical Sciences, University of Science and Technology of China, Hefei, Anhui, China}
\date{\today}

\begin{document}
\maketitle

\begin{abstract}
We prove the Erd\H{o}s Matching Conjecture for $4$-uniform hypergraphs for every matching number 
$s\ge6004$.  Specifically, for integers $s\ge 6004$ and  $n\ge 4s+4$, 
if a $n$-vertex 4-uniform hypergraph $\mathcal F$  does not contain a matching of size $s+1$, then 
\[
 |\mathcal F|\le
 \max\left\{\binom{4s+3}{4},\binom n4-\binom{n-s}{4}\right\}.
\]

Our proof introduces a finite-board reduction for general uniformity.  It
reduces the global conjecture to a lower-uniformity bound and a fixed finite
optimization at the two adjacent vertex numbers where the candidate
constructions exchange dominance.  In the $4$-uniform case the board has $19$
vertices, and its weighted inequality splits into $19$-, $15$-, and
$11$-vertex layers.  The hardest layer is resolved by exact rational dual
certificates and deterministic integer searches.  All computer-assisted steps
are checked in exact arithmetic by verifiers that reconstruct the finite
systems directly from their mathematical definitions.
\end{abstract}

\section{Introduction}

Given an integer $r\ge 2$, an \emph{$r$-uniform hypergraph} (henceforth an \emph{$r$-graph}) $\mathcal{H}$ is a collection of $r$-subsets of some set $V$. 
We call $V$ the \emph{vertex set} of $\mathcal{H}$ and denote it by $V(\mathcal{H})$. 
When $V$ is understood, we usually identify a hypergraph $\mathcal{H}$ with its edge set and use $|\mathcal{H}|$ to denote the number of edges of $\mathcal{H}$. Let $\nu(\F)$ denote the  \emph{matching number} of $\F$, i.e., the size of a maximum  matching of $\F$.

A classical problem, due to Erd\H{o}s \cite{Erdos1965}, asks for the maximum number of edges in an $r$-graph with a given matching number. 
Define
\[
m_r(n,s):=\max\left\{|\mathcal F|:\mathcal F\subseteq \binom{[n]}{r},\ \nu(\mathcal F)\le s\right\}.
\]

Note that there are two natural extremal constructions for $m_r(n,s)$.  Let 
\[
\mathcal{A}_r(n,s)=\mathcal{K}_{rs+r-1}^r\cup (n-(rs+r-1))\mathcal{K}_1^r~~\text{and}~~\mathcal{B}_r(n,s)=\mathcal{K}_{n}^r-\mathcal{K}_{n-s}^r,
\]
where $\mathcal{K}_{s}^r$ denote the complete $r$-graph on $s$ vertices. Then 
\[
|\mathcal{A}_r(n,s)|= a_r(s)\coloneqq\binom{rs+r-1}{r}, \text{ and }
|\mathcal{B}_r(n,s)|= b_r(n,s)\coloneqq\binom nr-\binom{n-s}{r}.
\]
 Moreover, both $\mathcal{A}_r(n,s)$ and $\mathcal{B}_r(n,s)$ have matching number $s$.

The following conjecture, known as the Erd\H{o}s Matching Conjecture (EMC for short), is one of the central extremal problems and recovers the Erd\H{o}s--Ko--Rado theorem \cite{EKR1961} when $s=1$.  
\begin{conjecture}[EMC, Erd\H{o}s \cite{Erdos1965}]\label{Erdos-Matching-Conjecture}
For all integers \(r\ge2\), \(s\ge0\), and \(n\ge r(s+1)\),
\begin{equation}\label{eq:EMC-general}
 m_r(n,s)
 =
 \max\left\{
a_r(s),
 b_r(n,s)
 \right\}.
\end{equation}
\end{conjecture}

For \(r=2\), Conjecture \ref{Erdos-Matching-Conjecture} follows from the classical  Erd\H{o}s-Gallai theorem 
\cite{ErdosGallai1959}.  For $r\ge 3$, much of the history of this conjecture can be organized
around how close \(n\) is to the lower admissible boundary \(r(s+1)\). 
Erd\H{o}s \cite{Erdos1965} already proved the conjecture for fixed $r$ and $s$ when $n \ge n_0(r,s)$. 
Subsequent work made the bounds explicit: Bollob\'as, Daykin, and Erd\H{o}s \cite{BDE1976} established 
the linear range $n \ge 2r^3 s$, later improved by Huang, Loh, and Sudakov \cite{HLS2012} to $n \ge 3r^2 s$.
Frankl, \L uczak, and Mieczkowska \cite{FLM2012} established the cover-side bound, with uniqueness, for $n>\frac{2r^2s}{\log r}$.

A major advance came from Frankl \cite{Frankl2013}, who showed $ m_r(n,s)=b_r(n,s)$ for $n \ge (2r-1)s + r$. For fixed $r$ and sufficiently large $s$, Frankl and
Kupavskii~\cite{FranklKupavskii2022} reduced this range to $ n\ge (5r-2)s/3$. 
After the first version of the present paper appeared, Cao, Liu, and Zhang
\cite{CaoLiuZhang2026} further improved the general cover-side range to
$n\ge(r+1)s$.  
At the opposite, clique-dominant end, the boundary case
\(n=r(s+1)\) follows from a classical theorem of Kleitman
\cite{K1968}. 
Frankl \cite{Frankl2017NewRange} proved that,
for every fixed \(r\), there exists \(\varepsilon_r>0\) such that
the EMC holds throughout $r(s+1)\le n<(r+\varepsilon_r)(s+1)$.
Stability and related refinements were developed in \cite{FranklKupavskii2017, GLM2022}; related matching and degree versions include~\cite{AlonEtAl2012,Han2016,HuangZhao2017}.
For related results on rainbow matchings in uniform hypergraphs, see \cite{GLMY2022,LWY2022,LWY2023,LYY2021}.

Before the present work, the only nontrivial uniformities for which the full conjecture is known for every admissible \(n\) are \(r=2\) and \(r=3\). 
In the three-uniform case, Frankl, R\"odl, and Ruci\'nski \cite{FRR2012} first proved the conjecture in the range \(n\ge4(s+1)\), in the notation of the present paper.  
\L uczak and Mieczkowska \cite{LuczakM2014} subsequently established the full
admissible range for all sufficiently large \(s\), and Frankl \cite{Frankl2017} completed the proof for every \(s\).
Frankl, R\"odl, and Ruci\'nski \cite{FRR2017} later gave a short proof for
\(s\ge33\); this is the form used as the lower-uniformity input in Theorem~\ref{lem:FRR2017}.
For \(r=4\), a recent work of Frankl, Lu, Ma, and Wu
\cite{FranklLuMaWu2026} proves stability results and establishes the
conjecture  when \(n\ge5s\), with \(n\) sufficiently large. The result of Cao, Liu, and Zhang
\cite{CaoLiuZhang2026} also gives this range when $r=4$. 
The present paper addresses the remaining all-admissible-\(n\) problem for
\(4\)-graphs when \(s\ge6004\).

\begin{theorem}\label{thm:main}
For integers $s\ge6004$ and $n\ge4s+4$, every $n$-vertex $4$-graph
$\F$ with $\nu(\F)\le s$ satisfies
\[
|\F|\le \max\left\{\binom{4s+3}4,\binom n4-\binom{n-s}4\right\}.
\]
\end{theorem}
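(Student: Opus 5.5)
The plan follows the finite-board reduction announced in the abstract. Standard shifting lets us assume $\F$ is shifted (left-compressed), and we may assume $\F$ is maximal with $\nu(\F)\le s$. Write $n=4s+4+t$ with $t\ge0$.

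The first step isolates where the two constructions exchange dominance. For fixed $s$, the quantity $b_4(n,s)-a_4(s)$ is increasing in $n$. Hence there is a unique threshold $n_0=n_0(s)$ with $a_4(s)\ge b_4(n,s)$ for $n\le n_0$ and $b_4(n,s)>a_4(s)$ for $n\ge n_0+1$; a computation gives $n_0\approx(4+c)s$ for an explicit constant $c\in(0,1)$. We then reduce the theorem to the two adjacent values $n_0$ and $n_0+1$, via a monotonicity argument in $n$.

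\begin{itemize}
\item For $n\ge n_0+1$ we use the cover-side range as far as it is available. Cao--Liu--Zhang gives $m_4(n,s)=b_4(n,s)$ for $n\ge5s$. For the gap $n_0+1\le n<5s$ we induct on $n$ by deleting vertex $n$. In a shifted family, $\F(\bar n)=\{F\in\F: n\notin F\}$ has matching number at most $s$. The link $\F(n)$ is a $3$-graph on $n-1$ vertices, and shiftedness forces $\nu(\F(n))\le s$. The $3$-uniform EMC (Theorem~\ref{lem:FRR2017}, valid for $s\ge33$) then bounds $|\F(n)|$ by $\max\{\binom{3s+2}{3},\binom{n-1}3-\binom{n-1-s}3\}$. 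In this range the second term dominates and equals $b_4(n,s)-b_4(n-1,s)$, so the induction closes provided the base case $n=n_0+1$ holds.
\item For $n\le n_0$ we argue downward, or directly by the analogous link/deletion recursion. The base cases are the clique-dominant boundary $n=4s+4$ (Kleitman) and $n=n_0$.
\end{itemize}

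Thus everything hinges on proving $m_4(n,s)\le\max\{a_4,b_4\}$ at the two adjacent values $n_0$ and $n_0+1$.

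At these two values we reduce to a finite board. A shifted family with $\nu\le s$ is determined by its set of maximal edges. Following Frankl's approach for the near-boundary range, we group the vertices into blocks: a fixed-size window of $19$ "type" positions encodes, via shiftedness, the local pattern of a maximal edge. Averaging over random partitions of $[n]$ into $s+1$ disjoint $4$-sets plus the leftover vertices (Katona-type averaging), and using $\nu\le s$, shows that each partition class contributes a bounded weighted count. This yields a weighted linear inequality over subsets of a $19$-vertex board. The weights depend on $s$ only through ratios tending to constants, and taking $s\ge6004$ keeps the error terms below the slack.

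We then show that the finite optimization's optimum is attained by the traces of $\mathcal A$ or $\mathcal B$. Conditioning on how many board vertices a block uses, the inequality splits into $19$-, $15$-, and $11$-vertex layers. The lower layers involve $3$-uniform traces, and there Theorem~\ref{lem:FRR2017} supplies the bound. The $19$-vertex layer is a genuine LP or integer program. It is certified by an exact rational dual solution together with a deterministic enumeration of the remaining integer configurations.

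The main obstacle is this hardest layer: making the error terms uniform in $s$ at exactly the crossover $n_0,n_0+1$, where there is essentially no slack between $a_4$ and $b_4$. My first attempt there would be an LP dual solved numerically, then rationalized and checked exactly.
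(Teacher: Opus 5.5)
Your reduction to the two adjacent critical values is sound and is essentially the paper's Lemma~\ref{fact:critical-reduction-r}. The upward induction via the $3$-uniform link bound works on its own, so Cao--Liu--Zhang and Kleitman are not needed, and for $n\le n_0$ it suffices to add isolated vertices.

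The gap is in the board step. A board-by-board comparison whose optimum is ``attained by $\mathcal A$ or $\mathcal B$'' does not sum to the right bound. Summing gives $\sum_\tau\max\{\mathcal A\text{-value},\mathcal B\text{-value}\}$, not the larger of the two totals. Moreover, $\mathcal B$ does not fit the board framework at all: it has no maximum matching avoiding vertex $1$, and it has traces of size one on $K=[4s+3]$.

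The missing idea is the property-$\ONE$ dichotomy.
\begin{itemize}
\item For stable maximal families with a maximum matching avoiding vertex $1$, fix an $s$-matching $E_1,\dots,E_s$ in $K\setminus\{1\}$ with residual triple $R_\F\ni 1$. Property $\ONE$ forces $|F\cap K|\ge 2$. Maximality gives closure: a trace $H$ with $|H|<4$ has all $\binom{q}{4-|H|}$ completions. The boards $V_\tau=R_\F\cup\bigcup_{i\in\tau}E_i$ then yield the exact identity \eqref{eq:edge-denote-weight}.
\item One compares with $\mathcal A$ \emph{alone} on every board. This gives $m^{\ONE}_4(n,s)\le a_4(s)$ at both critical values, and the same induction extends it to all $n$.
\item Families without property $\ONE$, including $\mathcal B$, are never placed on a board. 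Instead one deletes $1,\dots,q_0$ until property $\ONE$ appears. If $n-q_0\ge 7(s-q_0)+4$, Frankl's theorem closes the argument. Otherwise one uses the property-$\ONE$ bound at $(n-q_0,s-q_0)$, which lies on the cover side because $n_4(t)-n_4(t-1)\ge 2$.
\end{itemize}
This transfer is exactly what turns the board threshold $s\ge 3481$ into $s\ge 6004$. That constant has nothing to do with error terms.

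The board itself is also not specified in your plan. Your Katona-type averaging over random partitions of $[n]$ into $s+1$ four-sets does not define a board or weights. It discards the ordering that shiftedness refers to, and with $\nu\le s$ it only yields Kleitman-type bounds.

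What actually drives the local inequalities are two facts absent from your proposal:
\begin{itemize}
\item no $19$-board contains five pairwise disjoint traces (Lemma~\ref{lem:local-matching-r4});
\item the seeds $R_\F\cup\{e_{i,1}\}$ and $R_\F\cup\{e_{i,2}\}$ are present (Lemma~\ref{lem:seeds-r4}).
\end{itemize}

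Likewise, the $15$- and $11$-vertex layers do not come from the $3$-uniform theorem; that theorem is used only for the link bound in the induction on $n$. These layers are proved by explicit nonnegative combinations of five-disjoint-trace inequalities. The load on each missing quad is checked to be at most $625$ for the $15$-board and at most $375$ for the $11$-board.

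Your description of the $19$-vertex layer, with exact dual certificates plus exhaustive enumeration, does match the paper. It only becomes meaningful once the property-$\ONE$ setup above is in place.
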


For  $r\ge 2$, define the \emph{transition value}
\begin{equation*}\label{eq:nr-def}
 n_r(s)\coloneqq\min\left\{n\ge r(s+1):\binom{rs+r-1}{r}\le \binom nr-\binom{n-s}{r}\right\}.
\end{equation*}

Since $a_r(s)$ is independent of $n$, whereas $b_r(n,s)$ is strictly
increasing in $n$, the consecutive values $n_r(s)-1$ and $n_r(s)$ straddle
the transition between the two extremal constructions.  A central
feature of the proof is that it suffices to establish the required bound at 
these two values of $n$.

The proof is organized in two stages.  First, inspired by the work of Frankl, R\"odl, and Ruci\'nski \cite{FRR2017}, we 
prove the general finite-board criterion, 
Theorem~\ref{thm:finite-board-criterion}, in Section~\ref{sec:general-reduction}.  After shifting, the critical-value reduction, maximalization,
and the property-$\ONE$ branch---in which a stable family has a maximum matching
avoiding vertex $1$---is reduced to two inputs:
\begin{enumerate}[label=(\roman*)]
\item the $(r-1)$-uniform EMC gives the cover-side bound
$\binom m{r-1}-\binom{m-t}{r-1}$ for every $m$-vertex link with matching number
at most $t$, for all $m\ge n_r(t)$; and
\item a finite optimization problem for mixed-size trace configurations on an
$(r^2+r-1)$-vertex board, arising from the two critical parameter pairs
$n\in\{n_r(s)-1,n_r(s)\}$.
\end{enumerate}



The non-$\ONE$ deletion chain then transfers the result to arbitrary
families, with an explicit loss in the lower threshold for $s$.

Second, we verify the two inputs for $r=4$.  The known $3$-uniform theorem
provides the link bound, while
Sections~\ref{sec:board-setup}--\ref{sec:r4-board-input-proof} prove the finite
input on a $19$-vertex board.  The board comparison decomposes into three
weighted local inequalities on the $19$-, $15$-, and $11$-vertex layers.  The
hard part of the wide layer splits into top-star and no-top-star branches.
The former is verified by exact rational Farkas certificates; the latter is
established by deterministic finite up-set searches, exact arithmetic, and
rectangle-extension enumeration.

The local board input is verified uniformly for every $s\ge3481$ at
$n\in\{n_4(s)-1,n_4(s)\}$, and hence settles the property-$\ONE$ case in that
range.  The quantitative non-$\ONE$ deletion argument then yields the final
threshold $s\ge6004$ in Theorem~\ref{thm:main}.  We have made no attempt to
optimize this constant.

Appendix~\ref{app:certificate-specs} specifies the
ancillary scripts, their proof-critical success markers, and the mathematical
assertion verified by each script.  All calculations and certificates are
available at
\href{https://github.com/xliu2022/xliu2022.github.io/blob/main/EMC_certificate.zip}{\texttt{EMC\_certificate.zip}}.

\section{A finite-board criterion for general \texorpdfstring{$r$}{r}}\label{sec:general-reduction}

In this section, we establish a general finite-board criterion for Conjecture \ref{Erdos-Matching-Conjecture}.

\subsection{Critical values and the two inputs}\label{Critical-values}

For fixed $r\ge 2$, recall that
\begin{equation}\label{eq:nr-def}
 n_r(s)\coloneqq\min\left\{n\ge r(s+1):\binom{rs+r-1}{r}\le \binom nr-\binom{n-s}{r}\right\}.
\end{equation}
For  a real number $x\geq 1$, define the function 
\[
f_r(x)=x^r-(x-1)^r-r^r.
\]
It is clear that $f_r(x)$ is strictly increasing on $(1,\infty)$, satisfies $f_r(r)<0$, and tends to infinity as $x\to\infty$. Let $\rho_r> r$ denote the unique real root of $f_r(x)=0$ for $x\geq 1$.

\begin{lemma}\label{lem:critical-gap-general}
For fixed $r\ge2$, $n_r(s)=\rho_r s+o(s)$.  Moreover, for all sufficiently large $s$,
$n_r(s)-n_r(s-1)\ge2$.
\end{lemma}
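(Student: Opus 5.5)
The plan is to sharpen the first claim to $n_r(s)=\rho_r s+\kappa_r+O(1/s)$ for an explicit constant $\kappa_r$, and to read off both assertions from this. Write
\[
g(n,s)\coloneqq\binom nr-\binom{n-s}r-\binom{rs+r-1}r .
\]
This is a polynomial in $(n,s)$ of total degree $r$. Since $b_r(n,s)$ is strictly increasing in $n$ and $a_r(s)$ does not depend on $n$, $n_r(s)$ is the least integer $n\ge r(s+1)$ with $g(n,s)\ge0$. So it suffices to locate the sign change of $g(\cdot,s)$ to within a window of length $o(1)$.

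The degree-$r$ homogeneous part of $g$ is $\frac{1}{r!}\bigl(n^r-(n-s)^r-(rs)^r\bigr)=\frac{s^r}{r!}f_r(n/s)$. Substituting $n=\rho_r s+y$ makes $g$ a polynomial $P(s,y)$. Its $s^r$ coefficient is $f_r(\rho_r)/r!=0$, so
\[
P(s,y)=\frac{s^{r-1}}{r!}\bigl(f_r'(\rho_r)\,y+c_r\bigr)+R(s,y),
\]
where $c_r$ is the constant from the degree-$(r-1)$ part of $g$ evaluated at $(\rho_r,1)$. The remainder $R$ collects terms $s^i y^j$ with $i+j\le r$ and $i\le r-2$, so $|R(s,y)|=O(s^{r-2})$ uniformly for $|y|\le C$.

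Next comes the localization. Since $f_r'(\rho_r)=r\bigl(\rho_r^{r-1}-(\rho_r-1)^{r-1}\bigr)>0$, set $\kappa_r=-c_r/f_r'(\rho_r)$ and fix any $\varepsilon>0$.
\begin{itemize}
\item For $y=\kappa_r\pm\varepsilon$ and large $s$, the main term has size $\asymp\varepsilon s^{r-1}$, which dominates $R$. Hence $g>0$ at $n=\rho_r s+\kappa_r+\varepsilon$ and $g<0$ at $n=\rho_r s+\kappa_r-\varepsilon$.
\item Outside the window $|y|\le C$ one needs the sign globally in $n$; monotonicity of $b_r$ in $n$ supplies this. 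Every integer $n$ below $\rho_r s+\kappa_r-\varepsilon$ has $g(n,s)<0$, and every integer above $\rho_r s+\kappa_r+\varepsilon$ has $g(n,s)>0$.
\item Since $\rho_r>r$, the constraint $n\ge r(s+1)$ is inactive for large $s$.
\end{itemize}
Letting $\varepsilon\to0$ slowly, for instance $\varepsilon=K/s$ with $K$ large, gives $n_r(s)=\lceil \rho_r s+\kappa_r+\delta_s\rceil$ with $\delta_s\to0$. In particular $n_r(s)=\rho_r s+o(s)$.

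For the gap, use $\lceil x\rceil-\lceil x'\rceil> x-x'-1$. This gives
\[
n_r(s)-n_r(s-1)>\rho_r-1+\delta_s-\delta_{s-1}.
\]
Since $\rho_r>r\ge2$, the right side exceeds $1$ for large $s$. As the left side is an integer, it is at least $2$.

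The main obstacle is the bookkeeping in the second-order expansion. One must check that no term of order $s^{r-1}$ with $y$-dependence beyond the linear term is hidden in the remainder. This holds because any monomial $s^iy^j$ with $j\ge2$ has $i\le r-2$ by the degree bound. One must also check that the $O(1)$ corrections in $a_r(s)$ and the binomials are all absorbed into $c_r$ or into $R$. A cruder estimate, giving only $|n_r(s)-\rho_r s|\le C$, would not suffice, since it yields only a gap of at least $\rho_r-2C$. The second-order term is therefore genuinely needed.
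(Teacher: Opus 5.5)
Your argument is correct, but it reaches the gap by a different route than the paper. You locate the real root of $g(\cdot,s)$ to second order, $\rho_r s+\kappa_r+O(1/s)$, so that $n_r(s)=\lceil\rho_r s+\kappa_r+\delta_s\rceil$ with $\delta_s\to0$. The gap then follows from $\lceil x\rceil-\lceil x'\rceil>x-x'-1$ and $\rho_r-1>1$.

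The paper never needs the constant term. With $N:=n_r(s)=\rho_r s+o(s)$, it computes directly
\[
\Delta_{s-1}(N-2)-\Delta_s(N)=\frac{(\rho_r-2)\bigl(\rho_r^{r-1}-(\rho_r-1)^{r-1}\bigr)}{(r-1)!}\,s^{r-1}+o(s^{r-1}),
\]
using $r^r=\rho_r^r-(\rho_r-1)^r$. The exact information $\Delta_s(N)\ge0$ then gives $\Delta_{s-1}(N-2)>0$, i.e.\ $n_r(s-1)\le N-2$. Since both terms are evaluated at the same $N$, the unknown $O(1)$ offset $N-\rho_r s$ drops out at order $s^{r-1}$. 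Geometrically, the step $(s,n)\mapsto(s-1,n-2)$ crosses the zero curve of slope $\rho_r>2$ into the region $\Delta>0$. So your closing remark that the second-order term is ``genuinely needed'' holds only for the strategy of locating $n_r(s)$ and $n_r(s-1)$ separately.

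Your route gives more information: the constant $\kappa_r$, and hence two-sided control of the gap. The paper's route is more economical, and it is exactly what gets made explicit in Lemma~\ref{lem:r4-explicit-threshold}. There one only needs an upper bound on $n_4(s)$ and monotonicity of $F_s(N)$ in $N$, rather than explicit constants in your $O(s^{r-2})$ remainder.

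Two small points of precision. First, the opening claim $n_r(s)=\rho_r s+\kappa_r+O(1/s)$ is false as written for the integer $n_r(s)$; it should refer to the real root, and your later ceiling formula is the right form. Second, comparing integers with the real points $\rho_r s+\kappa_r\pm\varepsilon$ needs monotonicity of $x\mapsto\binom xr-\binom{x-s}r$ for real $x\ge s+r-1$. This is true, since $\binom xr$ is convex beyond its largest root $r-1$. Alternatively, apply your expansion directly at the nearest integers, which still lie in the window $|y|\le C$.
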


\begin{proof}
Let 
\[
 \Delta_s(n)\coloneqq\binom nr-\binom{n-s}{r}-\binom{rs+r-1}{r}.
\]
Recall that  $n_r(s)$ is the first admissible integer $n\ge r(s+1)$ for which $\Delta_s(n)\ge0$. Since $\Delta_s(n+1)-\Delta_s(n)=\binom n{r-1}-\binom{n-s}{r-1}>0$,  we have  $\Delta_s(n)$ is
strictly increasing in $n$. Set $n=xs$. Then 
\[
 \Delta_s(n)=\frac{s^r}{r!}\left(x^r-(x-1)^r-r^r\right)+O(s^{r-1}).
\]
By the definition of $\rho_r$, we have $n_r(s)=\rho_r s+o(s)$. 
In the following, we prove that $n_r(s)-n_r(s-1)\geq 2$ for sufficiently large $s$. 
It suffices to show that $\Delta_{s-1}(n_r(s)-2)\geq 0$ for large $s$.
Let $N=n_r(s)$. Using the fact that $\Delta_s(N)\geq 0$, we prove that $\Delta_{s-1}(N-2)-\Delta_s(N)\geq 0$. It follows from $N=\rho_r s+o(s)$ and direct subtraction that 
\begin{align*}
 \Delta_{s-1}(N-2)-\Delta_s(N)
 &=-\binom{N-1}{r-1}-\binom{N-2}{r-1}+\binom{N-s-1}{r-1}
   +\sum_{i=-1}^{r-2}\binom{rs+i}{r-1}  \\
 &=\frac{r^r+(\rho_r-1)^{r-1}-2\rho_r^{r-1}}{(r-1)!}s^{r-1}+o(s^{r-1}).
\end{align*}
Using $r^r=\rho_r^r-(\rho_r-1)^r$, the numerator equals
$(\rho_r-2)\bigl(\rho_r^{r-1}-(\rho_r-1)^{r-1}\bigr)>0$.
Hence, $\Delta_{s-1}(N-2)-\Delta_s(N)> 0$ for large $s$.
\end{proof}

\begin{assumption}\label{ass:link-general}
There is an integer $s_{\rm link}(r)$ such that, for every $t\ge s_{\rm link}(r)$ and every
$m\ge n_r(t)$, every $(r-1)$-uniform hypergraph $J$ on $m$ vertices with $\nu(J)\le t$ satisfies
 \[
 |J|\le \binom m{r-1}-\binom{m-t}{r-1}.
 \]
\end{assumption}
Under Assumption~\ref{ass:link-general}, we can reduce the discussion of Conjecture \ref{Erdos-Matching-Conjecture} for $r$-graphs to a non-uniform hypergraph on   $r^2+r-1$ vertices. 
In the remainder of this section, we will prove how this reduction is achieved, drawing some ideas from \cite{FRR2017}.



For $[n]=\{1,2,\dots,n\}$, given two $r$-sets $A=\{a_1,\ldots,a_r\}$ and
$B=\{b_1,\ldots,b_r\}\in \binom{[n]}r$ with increasingly ordered elements,
write $A\preceq B$ if $a_i\le b_i$ for every $i$, and write $A\prec B$ if
$A\preceq B$ and $A\ne B$.  An $r$-graph
$\mathcal{F}\subseteq\binom{[n]}r$ is \emph{stable} (or \emph{shifted}) if
$A\preceq B$ and $B\in\mathcal F$ imply $A\in\mathcal F$.  The standard
shifting operation produces a stable family
$\operatorname{sh}(\mathcal F)$ with the same size and with
$\nu(\operatorname{sh}(\mathcal F))\le\nu(\mathcal F)$.  Hence it suffices to
prove EMC for stable $r$-graphs.  We
shall also use the following immediate consequence: 
\[
\text{if } \quad \nu(\mathcal{F}) = s \quad \text{and} \quad |\mathcal{F}| = m_r(n, s), \quad \text{then} \quad \nu(sh(\mathcal{F})) = \nu(\mathcal{F}).
\]

\begin{lemma}\label{fact:critical-reduction-r}
Under Assumption~\ref{ass:link-general}, if, for some fixed $s\ge s_{\rm link}(r)$,
\[
m_r(n_r(s)-1,s)=a_r(s)~~\text{and}~~m_r(n_r(s),s)=b_r(n_r(s),s),
\]
then $m_r(n,s)=\max\left\{
a_r(s),
 b_r(n,s)
 \right\}$ for all $n\ge r(s+1)$.
\end{lemma}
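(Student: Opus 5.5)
The plan is to treat the two sides of the transition separately. On the clique side I will use monotonicity of $m_r(n,s)$ in $n$. On the cover side I will run an upward induction on $n$, splitting off the last vertex of a stable family and bounding its link with Assumption~\ref{ass:link-general}. In both cases the lower bound $m_r(n,s)\ge\max\{a_r(s),b_r(n,s)\}$ is immediate from the constructions $\mathcal A_r(n,s)$ and $\mathcal B_r(n,s)$. So only the upper bound needs proof.

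\emph{Clique side, $r(s+1)\le n\le n_r(s)-1$.} I will first note that $m_r(n,s)$ is nondecreasing in $n$: a family on $[n]$ is also a family on $[n+1]$ with the same matching number. Hence $m_r(n,s)\le m_r(n_r(s)-1,s)=a_r(s)$, using the first hypothesis. By the definition \eqref{eq:nr-def} of $n_r(s)$, and since $\Delta_s(n)$ is strictly increasing (as shown in the proof of Lemma~\ref{lem:critical-gap-general}), we have $b_r(n,s)<a_r(s)$ for these $n$. So the maximum equals $a_r(s)$, which settles this range.

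\emph{Cover side, $n\ge n_r(s)$.} Here $\max\{a_r(s),b_r(n,s)\}=b_r(n,s)$, again by monotonicity of $\Delta_s$. I will prove $m_r(n,s)\le b_r(n,s)$ by induction on $n$; the base case $n=n_r(s)$ is the second hypothesis. For $n>n_r(s)$, shifting lets me take a stable $\F\subseteq\binom{[n]}r$ with $\nu(\F)\le s$. Split $\F=\F(\bar n)\cup\{L\cup\{n\}:L\in\F(n)\}$, where $\F(\bar n)$ consists of the edges avoiding $n$ and $\F(n)$ is the $(r-1)$-uniform link of $n$ on $[n-1]$.
\begin{itemize}
\item The family $\F(\bar n)$ lives on $[n-1]$ with $\nu\le s$. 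By induction, $|\F(\bar n)|\le b_r(n-1,s)$.
\item I claim $\nu(\F(n))\le s$. Suppose $L_1,\dots,L_{s+1}$ are pairwise disjoint sets in $\F(n)$. They use $(r-1)(s+1)$ vertices of $[n-1]$. Since $n\ge r(s+1)$, at least $s$ vertices $u_1,\dots,u_s\in[n-1]$ avoid all $L_i$. By stability, $L_i\cup\{u_i\}\preceq L_i\cup\{n\}$, so $L_i\cup\{u_i\}\in\F$ for $i\le s$, and $L_{s+1}\cup\{n\}\in\F$. These $s+1$ edges are pairwise disjoint, contradicting $\nu(\F)\le s$.
\item Since $m:=n-1\ge n_r(s)$ and $s\ge s_{\rm link}(r)$, Assumption~\ref{ass:link-general} with $t=s$ gives $|\F(n)|\le\binom{n-1}{r-1}-\binom{n-1-s}{r-1}$.
\end{itemize}
Adding the two bounds and using Pascal's rule twice gives
\[
|\F|\le\binom{n-1}{r}-\binom{n-1-s}{r}+\binom{n-1}{r-1}-\binom{n-1-s}{r-1}=b_r(n,s),
\]
which closes the induction.

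The only step with real content is the link matching bound in the second bullet. It needs the slack $n-(r-1)(s+1)\ge s+1$ to find the $s$ replacement vertices (the $(s+1)$-st edge keeps $n$), and that slack is exactly the admissibility condition $n\ge r(s+1)$. Everything else is bookkeeping: monotonicity of $m_r(\cdot,s)$ and of $\Delta_s$, and the standard fact that shifting does not increase $\nu$.
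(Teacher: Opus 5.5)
Your proof is correct and follows the paper's argument essentially verbatim. You use monotonicity in $n$ on the clique side, and on the cover side an induction on $n$ from the base case $n_r(s)$, deleting the last vertex of a stable family and bounding $\F(\bar n)$ by induction and the link $\F(n)$ via $\nu(\F(n))\le s$ and Assumption~\ref{ass:link-general}. The only cosmetic difference is that you keep one link edge through $n$ and replace $n$ in the other $s$ edges, which needs only $n\ge r(s+1)$, whereas the paper uses $n-1\ge r(s+1)$ to find $s+1$ replacement vertices.
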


\begin{proof}

Using the $r$-graph constructions $\mathcal{A}_r(n,s)$ and $\mathcal{B}_r(n,s)$, we derive the lower bound
\[
m_r(n,s)\ge \max\bigl\{a_r(s),\, b_r(n,s)\bigr\}.
\]
To establish the upper bound for
$r(s+1)\le n\le n_r(s)-1$, add isolated vertices to any admissible
$r$-graph on $n$ vertices until the vertex set has size $n_r(s)-1$.  The
hypothesis at $n_r(s)-1$ then gives
\[
        m_r(n,s)\le m_r(n_r(s)-1,s)=a_r(s).
\]
Hence it remains to show
\begin{equation}\label{EQ:mr_le_br}
m_r(n,s)\le b_r(n,s)
\end{equation}
for all $n\ge n_r(s)$. 
We proceed by induction on $n$. For the base case \(n = n_r(s)\), the desired bound is precisely the statement of the lemma. Now assume that \(n \ge n_r(s) + 1\) and that \eqref{EQ:mr_le_br} holds for every integer \(n'\) satisfying $n_r(s) \le n'< n$.

Let $\mathcal{F}\subseteq\binom{[n]}r$ be an $r$-graph  with $\nu(\mathcal F)=s$. We may assume that $\mathcal F$ is stable. Let  $\mathcal F(\bar n)=\{F\in\F:n\notin\F\}$ and $\mathcal F(n)=\{F\setminus \{n\}:n\in F\in\F\}$. 
Then
\begin{equation*}\label{eq:mr_le_br}
|\mathcal F|
=
|\mathcal F(\bar n)|+|\mathcal F(n)|.
\end{equation*}
Note that $\mathcal F(\bar n)$ is defined on $[n-1]$ and  $\nu(\mathcal F(\bar n))\le s$, so the induction hypothesis gives
\begin{equation}\label{EQ:bound-F-delete-n-induction}
|\mathcal F(\bar n)|\le \max\{a_r(s), b_r(n-1,s)\} =\binom{n-1}{r}-\binom{n-1-s}{r}.
\end{equation}

We claim that $\nu(\mathcal F(n))\le s$.
Suppose, to the contrary, that there are pairwise disjoint edges
\[
e_1,\ldots,e_{s+1}\in \mathcal F(n).
\]
Then $e_i\cup\{n\}\in\mathcal F $ for $i\in [s+1]$.
It follows from $n\ge n_r(s)+1$ and $n_r(s)\ge r(s+1)$ that $n-1\ge r(s+1)$. Consequently, 
\[
\left|[n-1]\setminus\bigcup_{i=1}^{s+1}e_i\right|
\ge
r(s+1)-(r-1)(s+1)
=
s+1.
\]
We can choose distinct vertices $v_1,\ldots,v_{s+1}\in[n-1]\setminus\bigcup_{i=1}^{s+1}e_i$.
As $\mathcal F$ is stable and $v_i<n$,  we have  $e_i\cup\{v_i\}\in\mathcal F$. 
Hence,
\[
\bigl\{e_1 \cup \{v_1\}, \dots, e_{s+1} \cup \{v_{s+1}\}\bigr\}
\]
forms a matching of size \(s+1\) in \(\mathcal F\), contradicting the assumption that \(\nu(\mathcal F) = s\).
Using Assumption~\ref{ass:link-general} gives 
\[
|\mathcal F(n)|\le \binom{n-1}{r-1}-\binom{n-1-s}{r-1}, 
\]
which together with \eqref{EQ:bound-F-delete-n-induction} yields that 
\begin{align*}
|\mathcal F|\le \binom{n-1}{r} -\binom{n-1-s}{r}+ \binom{n-1}{r-1} -\binom{n-1-s}{r-1} =\binom nr-\binom{n-s}{r}= b_r(n,s).
\end{align*}
This completes the proof. 
\end{proof}

We say that a stable $r$-graph $\mathcal{F}\subseteq\binom{[n]}r$  has \emph{property ONE} if $\mathcal{F}$ has a maximum matching avoiding the vertex $1$.
Let
$$m^{ONE}_{r}(n,s)=\max\{|\F|:|V(\F)|=n, \nu(\F)=s,~\text{and}~\F~\text{is stable and has property ONE}\}.$$
An $r$-graph $\F$ with $\nu(\F)=s$ is \emph{maximal} if for every missing $r$-set $Q\notin F$, we have $\nu(\F\cup\{Q\})=s+1$.

\begin{lemma}\label{fact:one-extremizer-maximal}
Let \(\F\subseteq \binom{[n]}{r}\) be stable with \(\nu(\F)=s\),  and have property ONE. If   $|\F|=m^{\rm ONE}_r(n,s)$, then  \(\F\) is maximal. 
\end{lemma}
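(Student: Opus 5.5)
The plan is to argue by contradiction: from a non-maximal extremal family we build a strictly larger stable family that still has matching number $s$ and property ONE. Suppose $\F$ is stable, has property ONE, satisfies $\nu(\F)=s$ and $|\F|=m^{\rm ONE}_r(n,s)$, but is not maximal. Then there is an $r$-set $Q\notin\F$ with $\nu(\F\cup\{Q\})=s$. Put $\mathcal G=\F\cup\{Q\}$, so $|\mathcal G|=|\F|+1$ and $\nu(\mathcal G)=s$. The family $\mathcal G$ need not be stable, so we shift it. The whole point is to check that shifting $\mathcal G$ moves only the one new set $Q$ and never touches $\F$.

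\textbf{Step 1: shifting fixes $\F$.} Recall the elementary shift $S_{ij}$ for $i<j$. It replaces $A\in\mathcal G$ with $j\in A$ and $i\notin A$ by $A'=(A\setminus\{j\})\cup\{i\}$, provided $A'\notin\mathcal G$. For $A\in\F$ we have $A'\preceq A$, so $A'\in\F\subseteq\mathcal G$ by stability of $\F$. Hence no set of $\F$ ever moves. Applying this inductively along the sequence of shifts defining $\operatorname{sh}(\mathcal G)$, each intermediate family has the form $\F\cup\{Q_k\}$ with $Q_k\notin\F$. Indeed, the only set that can move is the current extra set $Q_k$, and it moves only to a set not already present. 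Therefore
\[
\operatorname{sh}(\mathcal G)=\F\cup\{Q^*\},\qquad Q^*\notin\F,
\]
and this family is stable with $|\operatorname{sh}(\mathcal G)|=|\F|+1$.

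\textbf{Step 2: the matching number is exactly $s$.} By the standard property of shifting quoted before Lemma~\ref{fact:critical-reduction-r}, $\nu(\operatorname{sh}(\mathcal G))\le\nu(\mathcal G)=s$. Conversely, $\F\subseteq\operatorname{sh}(\mathcal G)$, so $\nu(\operatorname{sh}(\mathcal G))\ge\nu(\F)=s$. Hence $\nu(\operatorname{sh}(\mathcal G))=s$.

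\textbf{Step 3: property ONE is preserved.} Since $\F$ has property ONE, it contains a matching of size $s$ avoiding vertex $1$. That matching still lies in $\operatorname{sh}(\mathcal G)$, and $s$ is the maximum matching size there. So $\operatorname{sh}(\mathcal G)$ is a stable $n$-vertex $r$-graph with matching number $s$ and property ONE. Its size $|\F|+1$ exceeds $m^{\rm ONE}_r(n,s)$, which is the desired contradiction.

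The only delicate point is Step 1. One must verify that shifting cannot move a set of $\F$, which uses stability of $\F$ exactly as stated. One must also verify that the extra set never collides with $\F$, which holds because $S_{ij}$ moves a set only into an unoccupied position. With this in hand, the rest is bookkeeping. The key reason to use shifting, rather than simply choosing a $\preceq$-minimal missing set, is that we do not need to know directly whether matchings through $Q$ can be transferred to sets below $Q$. The general inequality $\nu(\operatorname{sh}(\mathcal G))\le\nu(\mathcal G)$ handles this automatically.
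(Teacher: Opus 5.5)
Your proof is correct and follows the paper's argument: shift $\F\cup\{Q\}$, observe that $\F$ survives, so the matching number stays exactly $s$ and the $s$-matching avoiding vertex $1$ persists, giving a larger property-ONE family. Your Step 1 also spells out why this works, via elementary shifts $S_{ij}$ and stability of $\F$, where the paper only remarks briefly that the shifting "can be chosen so that the stable subfamily $\F$ is retained."
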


\begin{proof}
 Suppose, for a contradiction, that there exists an $r$-set \(Q\notin \F\) such that $\nu(\F\cup\{Q\})=s$.
Since shifting does not increase matching number, $\nu\left(\operatorname{sh}(\F\cup\{Q\})\right)\le \nu(\F\cup\{Q\})=s$.
On the other hand, the shifting process can be chosen so that the stable
subfamily $\F$ is retained, and hence
$\nu\left(\operatorname{sh}(\F\cup\{Q\})\right)\ge \nu(\F)=s$.
So $\nu(\operatorname{sh}(\F\cup\{Q\}))=s$.
Moreover, the \(s\)-matching in \(\F\) avoiding vertex \(1\) is still present in \(\operatorname{sh}(\F\cup\{Q\})\). Hence \(\operatorname{sh}(\F\cup\{Q\})\) has property ONE. Therefore
\[
m^{\rm ONE}_r(n,s)\ge |\operatorname{sh}(\F\cup\{Q\})|=|\F|+1,
\]
contradicting the choice of \(\F\). 
\end{proof}

Using a similar  strategy as in the proof of Lemma \ref{fact:critical-reduction-r}, we can obtain the following lemma.

\begin{lemma} \label{fact:ONE-critical-reduction-r}
Under Assumption~\ref{ass:link-general}, if, for some fixed integer \(s\ge s_{\rm link}(r)\), 
\[
m^{\mathrm{ONE}}_r(n_r(s)-1,s)\le a_r(s)
        \quad\text{and}\quad
m^{\mathrm{ONE}}_r(n_r(s),s)\le b_r(n_r(s),s),
\]
then $m^{\mathrm{ONE}}_r(n,s)\le \max\bigl\{a_r(s),\, b_r(n,s)\bigr\}$ for all \(n\ge r(s+1)\).
\end{lemma}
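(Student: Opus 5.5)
The plan is to copy the proof of Lemma~\ref{fact:critical-reduction-r} step by step. At each step I will check that stability, matching number exactly $s$, and property ONE are preserved, so that the hypotheses on $m^{\mathrm{ONE}}_r$ can be applied in place of those on $m_r$.

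\emph{Range $r(s+1)\le n\le n_r(s)-1$.} Let $\F\subseteq\binom{[n]}r$ be stable with $\nu(\F)=s$ and property ONE. View $\F$ as a family on $[n_r(s)-1]$ by adding isolated vertices $n+1,\dots,n_r(s)-1$.
\begin{itemize}
\item Stability is preserved. If $A\preceq B$ and $B\in\F$, then $B\subseteq[n]$, so every element of $A$ is at most the corresponding element of $B$; hence $A\subseteq[n]$ and $A\in\F$.
\item The matching number is still $s$, and the $s$-matching avoiding vertex $1$ is untouched, so property ONE is preserved.
\end{itemize}
The first hypothesis then gives $|\F|\le m^{\mathrm{ONE}}_r(n_r(s)-1,s)\le a_r(s)$.

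\emph{Range $n\ge n_r(s)$.} I will prove $m^{\mathrm{ONE}}_r(n,s)\le b_r(n,s)$ by induction on $n$, with the base case $n=n_r(s)$ being the second hypothesis. Let $n\ge n_r(s)+1$ and let $\F$ be stable on $[n]$ with $\nu(\F)=s$ and property ONE. Split $|\F|=|\F(\bar n)|+|\F(n)|$ as before.

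The link $\F(n)$ is handled exactly as in Lemma~\ref{fact:critical-reduction-r}. Stability and $n-1\ge r(s+1)$ show $\nu(\F(n))\le s$. Assumption~\ref{ass:link-general} applies with $m=n-1\ge n_r(s)$ and $t=s\ge s_{\rm link}(r)$, giving
\[
|\F(n)|\le \binom{n-1}{r-1}-\binom{n-1-s}{r-1}.
\]

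The main step is to show that $\F(\bar n)$ is admissible for $m^{\mathrm{ONE}}_r(n-1,s)$.
\begin{itemize}
\item $\F(\bar n)$ is stable on $[n-1]$: a set $A\preceq B$ with $B\in\F(\bar n)$ cannot contain $n$.
\item $\nu(\F(\bar n))\le s$ is clear.
\item For property ONE, take an $s$-matching $M$ of $\F$ avoiding vertex $1$; it covers $rs$ vertices. Since $n-1\ge r(s+1)$, the set $[2,n-1]$ contains at least $n-2-rs\ge r-1\ge1$ vertices not covered by $M$. If some edge $e\in M$ contains $n$, replace $n$ in $e$ by such an uncovered vertex $v\in[2,n-1]$. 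The new edge is $\preceq e$, so it lies in $\F$ by stability.
\end{itemize}
This gives an $s$-matching in $\F(\bar n)$ avoiding $1$. Hence $\nu(\F(\bar n))=s$ exactly and $\F(\bar n)$ has property ONE.

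By the induction hypothesis (or the base case when $n-1=n_r(s)$), together with $n-1\ge n_r(s)$, which makes $b_r(n-1,s)\ge a_r(s)$,
\[
|\F(\bar n)|\le m^{\mathrm{ONE}}_r(n-1,s)\le \binom{n-1}r-\binom{n-1-s}r .
\]
Adding this to the link bound and applying Pascal's rule gives $|\F|\le b_r(n,s)$.

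I expect the only genuinely new obstacle to be the step checking that property ONE survives deleting vertex $n$. The rest is a direct transcription of the proof of Lemma~\ref{fact:critical-reduction-r}.
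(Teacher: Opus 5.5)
Your proposal is correct and follows essentially the same route as the paper: pad with isolated vertices below $n_r(s)$, and above it induct on $n$ using the $\F(\bar n)/\F(n)$ split, the link bound from Assumption~\ref{ass:link-general}, and a stability-based replacement of $n$ to show $\F(\bar n)$ keeps property ONE. You also verify explicitly that $\nu(\F(\bar n))=s$ and that stability survives padding, which the paper leaves implicit.
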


\begin{proof}
 For \(n\le n_r(s)-2\), consider a stable \(r\)-graph
\(\mathcal F\) on \(n\) vertices, with property ONE, with
\(|\mathcal F|=m^{\mathrm{ONE}}_r(n,s)\), and 
\(\nu(\mathcal F)=s\). Adding  isolated vertices
\(n+1,\ldots,n_r(s)-1\) results in an \(r\)-graph
\(\mathcal F'\) which is still stable, has property ONE, and has
\(\nu(\mathcal F')=s\). Thus
\[
m^{\mathrm{ONE}}_r(n,s)=|\mathcal F|
 =|\mathcal F'|
 \le m^{\mathrm{ONE}}_r(n_r(s)-1,s)
 \le a_r(s).
\]
For \(n\ge n_r(s)+1\), we apply induction on \(n\). Assume that $m^{\mathrm{ONE}}_r(n-1,s)\le \max\bigl\{a_r(s), b_r(n-1,s)\}=b_r(n-1,s)$. 
Let \(\F\subseteq \binom{[n]}{r}\) be a stable \(r\)-graph  with property ONE, \(\nu(\mathcal F)=s\), and \(|\mathcal F|=m^{\mathrm{ONE}}_r(n,s)\). Let  $\mathcal F(\bar n)=\{F\in\F:n\notin\F\}$ and $\mathcal F(n)=\{F\setminus \{n\}:n\in F\in\F\}$. 
Then
\begin{equation*}\label{eq:mr_le_br}
|\mathcal F|
=
|\mathcal F(\bar n)|+|\mathcal F(n)|.
\end{equation*}
Using the same argument as  in the proof of Lemma \ref{fact:critical-reduction-r}, we still have $\nu(\mathcal F(n))\le s$ and then $|\mathcal F(n)|\le \binom{n-1}{r-1}-\binom{n-1-s}{r-1}$ under Assumption~\ref{ass:link-general}.  Moreover, start
with an $s$-matching in $\F$ avoiding vertex $1$.  If it uses $n$, then
$n-rs\ge r\ge2$, so there is an unused vertex other than $1$; replacing
$n$ by such a smaller vertex preserves the corresponding edge by stability.
Thus $\F(\bar n)$ has property $\ONE$.  By induction,
$|\F(\bar n)|\le m_r^{\ONE}(n-1,s)\le b_r(n-1,s)$.  Adding the two bounds
completes the proof.
\end{proof}



To prove the next result, we need the following classical theorem due to  Frankl \cite{Frankl2013}. 

\begin{theorem}(Frankl \cite{Frankl2013})\label{input:frankl-large-n}
If $\F$ is an $r$-graph on $n$ vertices with $\nu(\F)\le s$ and $n\ge (2r-1)s+r$, then $|\F|\le \binom nr-\binom{n-s}{r}$.
\end{theorem}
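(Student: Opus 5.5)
Since this is Frankl's classical theorem, taken as a black box in the paper, I will only outline the route I would follow. It mirrors Lemma~\ref{fact:critical-reduction-r}.

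\textbf{Reductions.} First shift, so that $\F$ may be assumed stable. Then split off the last vertex, writing $|\F|=|\F(\bar n)|+|\F(n)|$.

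\textbf{Double induction.} I would use induction on $r$ (the case $r=2$ is Erd\H{o}s--Gallai) and, for fixed $r$, induction on $n$ starting from $n_0=(2r-1)s+r$.
\begin{itemize}
\item As in the proof of Lemma~\ref{fact:critical-reduction-r}, stability together with $n-1\ge r(s+1)$ gives $\nu(\F(n))\le s$.
\item Since $n-1\ge(2r-1)s+r-1\ge(2(r-1)-1)s+(r-1)$, the $(r-1)$-uniform case applies and yields $|\F(n)|\le\binom{n-1}{r-1}-\binom{n-1-s}{r-1}$.
\item For $n>n_0$, the induction hypothesis on $n$ bounds $|\F(\bar n)|$ by $b_r(n-1,s)$.
\item Pascal's identity then closes the inductive step, exactly as in Lemma~\ref{fact:critical-reduction-r}.
\end{itemize}
So everything reduces to the base case $n=n_0$.

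\textbf{Base case.} Here one cannot use induction on $n$, and a genuinely new inequality is needed. I would follow Frankl and aim for a shadow-type estimate: for $\mathcal G$ with $\nu(\mathcal G)\le s$,
\[
|\partial\mathcal G|\ge |\mathcal G|/s .
\]
I would try to prove this by taking a maximum matching $M=\{M_1,\dots,M_s\}$ and charging each edge of $\mathcal G$ to an $(r-1)$-subset obtained by deleting a vertex lying in some $M_i$. Maximality of $M$ forces every edge to meet $\bigcup M_i$, and an augmentation argument bounds the multiplicity of the charging by $s$.

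I would then apply the shadow estimate to the subfamily $\F_0=\{F\in\F: F\cap[s]=\emptyset\}$. The target inequality $|\F|\le b_r(n,s)$ is equivalent to saying that the number of missing $r$-sets meeting $[s]$ is at least $|\F_0|$.
\begin{itemize}
\item Stability combined with $\nu(\F)\le s$ shows that, for each $G$ in the shadow of $\F_0$ and each $i\in[s]$, many sets $G\cup\{j\}$ must be absent from $\F$.
\item Averaging over the $n-s\ge(2r-2)s+r$ available elements should convert the inequality $|\partial\F_0|\ge|\F_0|/s$ into an injection-type count of the missing sets.
\item The hypothesis $n\ge(2r-1)s+r$ is precisely what makes this averaging favourable.
\end{itemize}

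\textbf{Main obstacle.} I expect this base-case counting to be the hardest step: proving the shadow inequality and tuning the averaging so that the threshold comes out as exactly $(2r-1)s+r$. If the direct charging fails, my fallback is to partition $[n]$ into $s+1$ blocks, use the matching condition to forbid certain block configurations, and average over random such partitions. This is the form in which Frankl's argument is usually presented.
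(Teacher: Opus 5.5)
The paper does not prove this statement; it quotes it from Frankl. Your outline therefore has to stand on its own, and it has a genuine gap exactly where you locate the difficulty.

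\textbf{The reduction and the shadow lemma.} Your induction on $n$ (and on $r$) is correct, but it only reduces the theorem to $n_0=(2r-1)s+r$. That is not a special value: the base case is the full statement at $n_0$. Both mechanisms you propose for the base case fail as described.

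The shadow inequality $|\partial\mathcal G|\ge|\mathcal G|/s$ is the right lemma, but your charging proof cannot work. Take $\mathcal G=\binom{[(s+1)r-1]}{r}$, which has $\nu(\mathcal G)=s$. A maximum matching $M$ leaves a set $G_0$ of $r-1$ uncovered vertices. Each of the $rs$ edges $G_0\cup\{v\}$ with $v\in V(M)$ has $v$ as its only vertex in $V(M)$. So any rule that deletes a vertex of $V(M)$ charges all of them to $G_0$, giving multiplicity $rs$ rather than $s$.

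The inequality does have a short correct proof by another route:
\begin{itemize}
\item shifting does not increase $|\partial\mathcal G|$ or $\nu(\mathcal G)$;
\item on a ground set of size $N\le(s+1)r-1$, the local LYM inequality gives $|\partial\mathcal G|\ge\frac{r}{N-r+1}|\mathcal G|\ge|\mathcal G|/s$;
\item for $N\ge(s+1)r$, with $\mathcal G(\bar N)$ and $\mathcal G(N)$ as in the proof of Lemma~\ref{fact:critical-reduction-r}, stability gives $\nu(\mathcal G(N))\le s$, and $|\partial\mathcal G|\ge|\partial\mathcal G(\bar N)|+|\partial\mathcal G(N)|$ lets you induct on $N$ and $r$.
\end{itemize}

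\textbf{The missing comparison.} The more serious gap is the step meant to turn the shadow bound into missing sets meeting $[s]$. It is not carried out, and the mechanism you indicate points the wrong way.

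Let $G\in\partial\F_0$, say $G\cup\{x\}\in\F_0$. Then $G\subseteq[s+1,n]$ and $G\cup\{i\}\preceq G\cup\{x\}$ for every $i\in[s]$, so stability forces $G\cup\{i\}\in\F$. Thus the shadow of $\F_0$ produces \emph{present} sets meeting $[s]$. Sets $G\cup\{j\}$ with $j\notin[s]$ do not meet $[s]$, so they do not count toward your reformulated target.

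Even a count of missing sets through each fixed $i\in[s]$ cannot be charged to $\partial\F_0$ without a further reduction. For $r=2$ and $s\ge2$, the family
\[
\F=\{\{1,y\}:2\le y\le n\}\cup\binom{[2,2s]}{2}
\]
is stable with $\nu(\F)=s$ and $|\partial\F_0|=s$, yet no pair through $1$ is missing.

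What you need is an actual injection from $\F_0$ into the missing $r$-sets meeting $[s]$. This is the only place where $\nu(\F)\le s$ and the hypothesis $n\ge(2r-1)s+r$ can enter. The averaging over the $n-s$ outside elements and the random-partition fallback are only named, not carried out.

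One preliminary reduction does go through. By induction on $s$ (the range condition is inherited), if the sets of $\F$ avoiding $1$ have matching number at most $s-1$, there are at most $\binom{n-1}{r}-\binom{n-s}{r}$ of them. Adding $\binom{n-1}{r-1}$ for the sets through $1$ gives the bound. So only the property-$\ONE$ case needs the real comparison, and that comparison is what your proposal does not supply.
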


We now transfer the property-$\ONE$ estimate to arbitrary families.  For the
quantitative statement, fix an integer $s_\Delta(r)$ and a real number
$\eta_r$ such that, for every $s\ge s_\Delta(r)$,
\begin{equation}\label{eq:transition-quantitative-assumptions}
        n_r(s)-n_r(s-1)\ge2
        \qquad\text{and}\qquad
        n_r(s)-1\ge \rho_rs+\eta_r.
\end{equation}
Lemma~\ref{lem:critical-gap-general} guarantees the first condition for some
$s_\Delta(r)$.  In the case $r=4$, both conditions are verified
explicitly in Lemma~\ref{lem:r4-explicit-threshold} and in the proof of
Theorem~\ref{thm:main}.

\begin{lemma} \label{lem:one-to-all-r}
Under Assumption~\ref{ass:link-general}, suppose that, for some
$s_0\ge s_\Delta(r)$,
\[
m^{\ONE}_{r}(n,s)\le \max\bigl\{a_r(s),\, b_r(n,s)\bigr\}
\quad\text{for all }s\ge s_0\text{ and }n\ge r(s+1).
\]
Then
\[
m_r(n,s)\le \max\bigl\{a_r(s),\, b_r(n,s)\bigr\}
\quad\text{for all }n\ge r(s+1)
\]
and all
\[
s\ge \max\{s_{\rm link}(r), s_0,c_0s_0+c_1\}.
\]
Here
\[
        c_0=\frac{2r-2}{\rho_r-1},
        \qquad
        c_1=\frac{r-1-\eta_r}{\rho_r-1},
\]
and $s_{\rm link}(r)$ is the constant from
Assumption~\ref{ass:link-general}.
\end{lemma}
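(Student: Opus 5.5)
The plan is to reduce to the two critical vertex numbers and then run a deletion chain on vertex $1,2,\dots$ until either property $\ONE$ appears or the matching number is small enough for Theorem~\ref{input:frankl-large-n}. Let $s\ge\max\{s_{\rm link}(r),s_0,c_0s_0+c_1\}$. Since $s\ge s_{\rm link}(r)$, Lemma~\ref{fact:critical-reduction-r} shows it suffices to prove $m_r(n,s)\le\max\{a_r(s),b_r(n,s)\}$ for $n\in\{n_r(s)-1,n_r(s)\}$. At $n=n_r(s)-1$ we have $b_r(n,s)<a_r(s)$, so in both cases it is enough to show $|\F|\le b_r(n,s)$ or $|\F|\le a_r(s)$. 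I will in fact aim for $|\F|\le b_r(n,s)$ in all but the $\ONE$-case at the first step. Take $\F\subseteq\binom{[n]}r$ stable with $\nu(\F)=s$ (shifting is harmless).

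\emph{The deletion chain.} Set $\F_0=\F$. If $\F_j$ (on vertex set $\{j+1,\dots,n\}$, stable, with $\nu(\F_j)=s-j$) lacks property $\ONE$, then every maximum matching uses its smallest vertex $j+1$. Hence the family $\F_{j+1}$ of edges of $\F_j$ avoiding $j+1$ has $\nu(\F_{j+1})\le s-j-1$, since an $(s-j)$-matching there would avoid $j+1$. We also have $|\F_j|\le|\F_{j+1}|+\binom{n-j-1}{r-1}$. Stop at the first $k$ at which either $\F_k$ has property $\ONE$, or $s-k<s_0$. Telescoping gives
\[
|\F|\le \sum_{i=0}^{k-1}\binom{n-1-i}{r-1}+|\F_k|,
\]
and the key identity is $\sum_{i<k}\binom{n-1-i}{r-1}+b_r(n-k,s-k)=b_r(n,s)$, which follows from Pascal's rule. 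So it suffices to show $|\F_k|\le b_r(n-k,s-k)$, except when $k=0$, where the hypothesis on $m^{\ONE}_r$ directly gives $|\F|\le\max\{a_r(s),b_r(n,s)\}$. The admissibility condition $n-k\ge r(s-k+1)$ is automatic from $n\ge r(s+1)$.

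\emph{Case $\F_k$ has $\ONE$ with $k\ge1$ and $s-k\ge s_0$.} By hypothesis $|\F_k|\le\max\{a_r(s-k),b_r(n-k,s-k)\}$ (padding $\nu$ up to equality is not needed, as a smaller matching number only helps after monotonicity in $s$). I need $n-k\ge n_r(s-k)$, so that the $b$-term dominates. This is where the gap condition \eqref{eq:transition-quantitative-assumptions} enters. Since every $s-j\ge s_0\ge s_\Delta(r)$, we get $n_r(s-k)\le n_r(s)-2k\le n-1-2k+1\le n-k$ for $k\ge1$, using $n\ge n_r(s)-1$. Then $|\F_k|\le b_r(n-k,s-k)$, and the identity gives $|\F|\le b_r(n,s)$.

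\emph{Case $s-k<s_0$ (the expected obstacle).} Here no $\ONE$ information is available, so I would invoke Theorem~\ref{input:frankl-large-n} on $\F_k$. That requires
\[
n-k\ge(2r-1)(s-k)+r.
\]
The right side minus the left side equals $(2r-1)s-n-(2r-2)k+r$, which is decreasing in $k$. It is therefore enough to check it at the smallest $k$ with $s-k<s_0$, namely $k=s-s_0+1$. This reduces to
\[
n\ge s+(2r-2)(s_0-1)+r+\dots .
\]
Using $n\ge n_r(s)-1\ge\rho_rs+\eta_r$, the condition becomes $(\rho_r-1)s\ge(2r-2)s_0+(r-1)-\eta_r$ after careful bookkeeping of the $\pm1$'s. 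This is exactly $s\ge c_0s_0+c_1$. The delicate part is getting these constant offsets right. Then $|\F_k|\le b_r(n-k,s-k)$, and again $|\F|\le b_r(n,s)\le\max\{a_r(s),b_r(n,s)\}$, completing the argument.
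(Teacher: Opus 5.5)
Your proposal is correct and is essentially the paper's argument: reduce to $n\in\{n_r(s)-1,n_r(s)\}$, delete vertices $1,2,\dots$ of the shifted family, then apply Frankl's theorem when the residual matching number is small, or the $m^{\ONE}$ hypothesis together with $n_r(s)-n_r(s-k)\ge 2k$ otherwise, and telescope back to $b_r(n,s)$. The only differences are cosmetic: you stop the chain once $s-k<s_0$ rather than bounding $q_0$ by contradiction; your offsets need only $n\ge s+(2r-2)(s_0-1)+r$, which is implied by (not equivalent to) $s\ge c_0s_0+c_1$; and the gap chain should read $n_r(s-k)\le n_r(s)-2k\le n+1-2k\le n-k$ for $k\ge1$.
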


\begin{proof}
Suppose, to the contrary, that for some $s\ge \max\{s_{\rm link}(r), s_0,c_0s_0+c_1\}$ and some $n\ge r(s+1)$ we have
$m_r(n,s)>\max\bigl\{a_r(s),\, b_r(n,s)\bigr\}$.
By Lemma~\ref{fact:critical-reduction-r}, there exist $n\in\{n_r(s)-1,n_r(s)\}$
and an $r$-graph \(\F\subseteq \binom{[n]}{r}\) such that $\nu(\mathcal F)=s$ and $|\mathcal F|=m_r(n,s)>\max\bigl\{a_r(s),\, b_r(n,s)\bigr\}$. Let $\mathcal F'=\operatorname{sh}(\mathcal F)$. Then $\mathcal F'$ cannot have property $\ONE$, since otherwise, it follows from $s\ge s_0$ and the the assumption of Lemma~\ref{lem:one-to-all-r} that 
\[
|\mathcal F|=|\mathcal F'|
\le
m_r^{\ONE}(n,s)
\le
\max\bigl\{a_r(s),\, b_r(n,s)\bigr\},
\]
a contradiction. 

Set $\mathcal F'_0 = \mathcal F'$. For $q \ge 1$, let $\mathcal F'_q$ be the induced subhypergraph of $\mathcal F'$ obtained by deleting vertices $1,2,\ldots,q$. Clearly, $\nu(\mathcal F'_q) \le s$ for $q \le s$. It follows from the stability of $\mathcal F'_q$ and the fact that $\mathcal F'$ does not have property $\mathrm{ONE}$ that $\nu(\mathcal F'_q) \ge s - q$ and $\nu(\mathcal F'_1) = s - 1$. Note that $\nu(\mathcal F'_s) \ge 1$; otherwise, we  have $\mathcal F'_s = \emptyset$, which  implies $|\mathcal F| = |\mathcal F'| \le b_r(n,s)$, a contradiction. Therefore, by the pigeonhole principle, there exists some $1 \le q \le s - 1$ such that $\nu(\mathcal F'_q) = \nu(\mathcal F'_{q+1})$, meaning that $\mathcal F'_q$ has property $\mathrm{ONE}$. Define
\[
q_0 = \min\{q \ge 1 : \mathcal F'_q \text{ has property } \mathrm{ONE}\}.
\]
Then $\mathcal F'_{q_0}$ has property $\mathrm{ONE}$, and by the minimality of $q_0$, we have $\nu(\mathcal F'_{q_0}) = s - q_0$.

We next bound $q_0$. 
If $n-q_0\ge (2r-1) (s-q_0)+r$, then by Lemma~\ref{input:frankl-large-n}, we have 
$|\mathcal F'_{q_0}| \le \binom{n-q_0}{r}-\binom{n-s}{r}$.
Note that the number of edges of $\mathcal F'$ meeting $\{1,\ldots,q_0\}$ is at most $\binom nr-\binom{n-q_0}{r}$. 
Therefore
\[
\begin{aligned}
|\mathcal F'|
&\le
\binom nr-\binom{n-q_0}{r}
+
|\mathcal F'_{q_0}|       \\
&\le
\binom nr-\binom{n-q_0}{r}
+
\binom{n-q_0}{r}
-
\binom{n-s}{r}             \\
&=
\binom nr-\binom{n-s}{r}
=
b_r(n,s)
\le
\max\bigl\{a_r(s),\, b_r(n,s)\bigr\},
\end{aligned}
\]
a contradiction. Hence, we have  
\begin{equation}\label{EQ:bound-n-q0-delete-n-induction}
n-q_0\le (2r-1)(s-q_0)+r-1.
\end{equation}
Recall from~\eqref{eq:transition-quantitative-assumptions} that
$n\ge n_r(s)-1\ge \rho_rs+\eta_r$.  Together with
\eqref{EQ:bound-n-q0-delete-n-induction}, this gives
\begin{equation}\label{eq:q0-upper-bound}
q_0\le \frac{(2r-1-\rho_r)s+r-1-\eta_r}{(2r-2)}. 
\end{equation}

We then bound $|\mathcal F'_{q_0}|$ using the  assumption of the lemma. It follows from $n\ge r(s+1)$ that 
\[
n-q_0
\ge
r(s+1)-q_0
=
r(s-q_0+1)+(r-1)q_0
\ge
r(s-q_0+1).
\]

Furthermore, by  \eqref{eq:q0-upper-bound}, we also have 
\begin{align*}
s-q_0 \ge s- \frac{(2r-1-\rho_r)s+r-1-\eta_r}{2r-2} =\frac{\rho_r-1}{2r-2}s -\frac{r-1-\eta_r}{2r-2}.
\end{align*}
By the choice of $c_0$ and $c_1$, $s\ge c_0s_0+c_1$ implies
\[
s-q_0\ge s_0
\]
Since $\mathcal F'_{q_0}$ has property $\ONE$, the assumption of the lemma
gives
\[
|\mathcal F'_{q_0}|
\le
m_r^{\ONE}(n-q_0,s-q_0)
\le
\max\bigl\{a_r(s-q_0), b_r(n-q_0,s-q_0)\bigr\}.
\]

Since \(s-q_0\ge s_0\ge s_\Delta(r)\), \eqref{eq:transition-quantitative-assumptions} gives $n_r(s)-n_r(s-q_0)=\sum_{t=s-q_0+1}^{s}\bigl(n_r(t)-n_r(t-1)\bigr)\ge 2q_0$, which implies that
\begin{align*}
n-q_0\ge n_r(s)-1-q_0
\ge n_r(s-q_0)+2q_0-1-q_0
\ge n_r(s-q_0).
\end{align*}
Hence
\begin{align}\label{EQ:bound- F'_q_0-basis}
|\mathcal F'_{q_0}|
\le
b_r(n-q_0,s-q_0)
\le
\binom{n-q_0}{r}-\binom{n-s}{r}.
\end{align}

To complete the proof, we establish the following bound:
\begin{align}\label{EQ:bound-F'-q-induction-main}
|\mathcal F'_q|
\le
\binom{n-q}{r}-\binom{n-s}{r},
\end{align}
for $0 \le q \le q_0$, by backward induction on $q$. The base case follows from \eqref{EQ:bound- F'_q_0-basis}. Assume now that for some $1 \le q \le q_0$, the inequality \eqref{EQ:bound-F'-q-induction-main} holds. Observe that every edge of $\mathcal F'_{q-1}$ that is not an edge of $\mathcal F'_q$ must contain the vertex $q$. Hence the number of such edges is at most $\binom{n-q}{r-1}$. Consequently,
\begin{align*}
|\mathcal F'_{q-1}|
&\le \binom{n-q}{r-1} + |\mathcal F'_q| \\
&\le \binom{n-q}{r-1} + \binom{n-q}{r} - \binom{n-s}{r} \\
&= \binom{n-q+1}{r} - \binom{n-s}{r}.
\end{align*}
This completes the induction step. Taking $q = 0$, we obtain
\[
|\mathcal F'|
=
|\mathcal F'_0|
\le
\binom{n}{r} - \binom{n-s}{r}
=
b_r(n,s)
\le
\max\bigl\{a_r(s),\, b_r(n,s)\bigr\}.
\]
a contradiction. This completes  the proof.
\end{proof}

Combining Lemmas~\ref{fact:one-extremizer-maximal}, \ref{fact:ONE-critical-reduction-r} and \ref{lem:one-to-all-r}, proving EMC for large $s$, it is enough to prove the following statement: There is a constant $s_0$ such that every stable maximal \(r\)-graph \(\F\subseteq\binom{[n]}r\), with
\begin{align}\label{condiction-of-F}
n\in\{n_r(s)-1,n_r(s)\},\qquad \nu(\F)=s\ge s_0, \qquad \text{   having property ONE},
\end{align}
satisfies $|\F|\le \max\bigl\{a_r(s),\, b_r(n,s)\bigr\}$.

\subsection{The trace and the local board}

By the argument in Subsection \ref{Critical-values}, let $\F\subseteq \binom{[n]}{r}$ be a stable maximal family with matching number $\nu(\F)=s$ and property ONE throughout this subsection.

Put 
\[
N\coloneqq rs+r-1, \quad K\coloneqq [N], \quad  q\coloneqq n-N\ge 1. 
\]
Define the \emph{trace family}  
$$\F_0\coloneqq \{F\cap K: F\in \F\}.$$
Every member in $\F_0$ is called a \emph{trace}. 
We claim that  
\begin{align}\label{EQ:size-trace-F0-least-2}
|F\cap K|\ge 2
\end{align}  
for every \(F\in \mathcal F\).
If \(F\cap K=\emptyset \), then \(F\) is disjoint from the \(s\)-matching \(\mathcal{M}\subseteq K\), giving an \((s+1)\)-matching in \(\mathcal F\), a contradiction.
If \(F\cap K=\{x\}\), by the stability of \(\mathcal F\),  \(F'=(F\setminus\{x\})\cup\{1\}\in \mathcal F\). 
Moreover, property $\ONE$ and stability give  an \(s\)-matching \(\mathcal M\) avoiding vertex \(1\)  with \(V(\mathcal M)\subseteq K\setminus\{1\}\). 
Consequently, \(\{F'\}\cup \mathcal M\) forms an \((s+1)\)-matching in \(\mathcal F\),  a contradiction.

\begin{lemma}\label{lem:trace-matching-general}
$\F_0$ is stable and  $\nu(\F_0)=s$.
\end{lemma}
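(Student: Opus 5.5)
Here $\F_0$ is a non-uniform family on $K=[N]$; its members have sizes between $2$ and $r$ by \eqref{EQ:size-trace-F0-least-2}. Stability of $\F_0$ should mean closure under $\preceq$ among sets of the same size. There are three things to prove: stability, $\nu(\F_0)\ge s$, and $\nu(\F_0)\le s$.

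\emph{Stability.} Let $T=F\cap K\in\F_0$ with $F\in\F$, and let $T'\preceq T$ with $|T'|=|T|$ and $T'\subseteq K$. Set $F'=T'\cup(F\setminus K)$. The elements of $F\setminus K$ all exceed $N$, and every element of $T$ or $T'$ is at most $N$. So the increasingly ordered list of $F'$ is that of $T'$ followed by that of $F\setminus K$, and coordinatewise this gives $F'\preceq F$. Stability of $\F$ yields $F'\in\F$, hence $T'=F'\cap K\in\F_0$.

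\emph{Lower bound.} Property $\ONE$ together with stability gives an $s$-matching $\M$ of $\F$ with $V(\M)\subseteq K\setminus\{1\}$, exactly as used in the proof of \eqref{EQ:size-trace-F0-least-2}. To obtain it, take any $s$-matching avoiding $1$ and repeatedly replace a vertex above $N$ by an unused vertex of $K\setminus\{1\}$. Such a vertex exists because $|K\setminus\{1\}|=rs+r-2\ge rs$, and stability preserves the edges under each replacement. The edges of $\M$ are their own traces, so $\nu(\F_0)\ge s$.

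\emph{Upper bound (the main step).} Suppose $T_1,\dots,T_{s+1}\in\F_0$ are pairwise disjoint, with $T_i=F_i\cap K$. The parts $F_i\setminus K$ may overlap outside $K$, so I will re-embed them. Each $F_i$ needs $r-|T_i|$ vertices outside $K$, and these must be chosen distinct across $i$. Since $|T_i|\ge2$, the total demand is $\sum_i(r-|T_i|)\le(r-2)(s+1)$. There are two places to find such vertices.

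First, by stability one may replace outside vertices by unused vertices of $K$, that is, vertices of $K\setminus\bigcup T_i$. If $F_i=T_i\cup S_i$ and $S_i'$ consists of vertices not in $\bigcup_j T_j$, with every $S_i'$-element at most the corresponding $S_i$-element in the increasing order, then $T_i\cup S_i'\preceq F_i$. Choosing $S_i'$ to be the smallest available elements does the job, and a simple greedy argument handles this. Second, any outside vertex of $[n]\setminus K$ is also allowed, but there are only $q$ of them.

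The count of unused vertices inside $K$ is $|K|-\sum|T_i| = rs+r-1-\sum|T_i|$. The total available supply is therefore $rs+r-1-\sum|T_i|+q$, while the demand is $r(s+1)-\sum|T_i|$. Since $q\ge1$, supply is at least $r(s+1)-\sum|T_i|$, which equals the demand. The matching condition is then handled by a greedy Hall-type argument: process the edges in some order and assign to each edge the smallest free vertices, which are $\le$ its original $S_i$-elements by stability.

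This yields an $(s+1)$-matching in $\F$, a contradiction. The delicate point is making the greedy replacement respect the coordinatewise order simultaneously for all $i$. I expect the cleanest way is to notice that all free vertices of $K$ are smaller than every vertex outside $K$. So for $F_i$ I take $r-|T_i|$ fresh vertices, using free vertices of $K$ first and then outside vertices in increasing order. Any set of distinct vertices built this way is $\preceq$ the original $S_i$ as long as we never assign a vertex larger than needed; if necessary, one processes the edges in decreasing order of $\max S_i$.
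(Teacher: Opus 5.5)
Your proof is correct and uses the same two ingredients as the paper. First, every vertex of $K$ lies below every vertex of $[n]\setminus K$, so stability lets you trade outside vertices for free vertices of $K$ while keeping the traces disjoint. Second, $r(s+1)-(rs+r-1)=1$. The only difference is organizational: you perform all the trades at once, whereas the paper chooses the traces and edges to maximize $|F_1\cup\cdots\cup F_{s+1}|$ and gets a contradiction from a single trade.

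The hedging in your last paragraph (greedy ordering, processing edges by $\max S_i$) is unnecessary. Your own count shows that once every free vertex of $K$ is used, exactly one slot remains, and that slot can simply keep its original outside vertex. Coordinatewise domination then holds for any assignment, so no ordering of the edges is needed.
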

\begin{proof}
The stability of $\F$ gives the stability of $\F_0$  and $\nu(\F_0)\ge s$. Suppose that \(\mathcal F_0\) contains \(s+1\) pairwise disjoint traces.
Among all choices of pairwise disjoint traces
\(H_1,\ldots,H_{s+1}\in\mathcal F_0\) and edges \(F_1,\ldots,F_{s+1}\in\mathcal F\) satisfying \(F_i\cap K=H_i\) for every \(i\), choose one for which
\[ 
|F_1\cup\cdots\cup F_{s+1}|
\]
is as large as possible.

If the $F_i$ are disjoint, they form an $(s+1)$-matching.
Otherwise two of them meet outside $K$, say in a vertex $u$.
Indeed, if all vertices of $K$ were used, then the $s+1$ edges would have only $r(s+1)-(rs+r-1)=1$ outside position in total, so no outside intersection could occur. Hence, there is a vertex $v\in K\setminus\bigcup_i F_i$. 
Replace $u$ in one edge containing it by $v$.  Stability keeps the new edge in $\F$, the corresponding trace is still disjoint from the other selected traces, and the union of the chosen edges is larger, contradicting the maximality of the choice.  
This contradiction proves $\nu(\F_0)\le s$.
\end{proof}

\begin{lemma}\label{fact:closure-general}
Let $H\in\F_0$ and $|H|<r$.  Then every $r$-set $Q$ with $H\subseteq Q\subseteq[n]$ belongs to $\F$. 
\end{lemma}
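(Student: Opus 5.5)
The plan is to argue by contradiction, using the maximality of $\F$ together with Lemma~\ref{lem:trace-matching-general}. The key point is that we never need to understand $Q$ itself beyond the fact that it contains $H$. The hypothesis $|H|<r$ only ensures that the statement has content, namely that there are $r$-sets $Q\supsetneq H$ to consider.

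Fix $H\in\F_0$ and an $r$-set $Q$ with $H\subseteq Q\subseteq[n]$, and suppose that $Q\notin\F$. Since $\F$ is maximal with $\nu(\F)=s$, the family $\F\cup\{Q\}$ contains a matching of size $s+1$. That matching must use $Q$, because $\nu(\F)=s$. Hence there are edges $M_1,\ldots,M_s\in\F$ that are pairwise disjoint and each disjoint from $Q$.

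Now pass to traces. Each $M_i\cap K$ lies in $\F_0$, and these sets are nonempty by \eqref{EQ:size-trace-F0-least-2}. They are pairwise disjoint because the $M_i$ are. Since $H\subseteq Q$ and every $M_i$ avoids $Q$, each $M_i\cap K$ is also disjoint from $H$. Moreover $H\in\F_0$ by assumption; it is the trace $F\cap K$ of some $F\in\F$. Therefore
\[
H,\; M_1\cap K,\;\ldots,\; M_s\cap K
\]
are $s+1$ pairwise disjoint members of $\F_0$. This gives $\nu(\F_0)\ge s+1$, contradicting Lemma~\ref{lem:trace-matching-general}. Hence $Q\in\F$.

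I expect no real obstacle here. The only care needed is to use the trace-level matching bound from Lemma~\ref{lem:trace-matching-general} rather than trying to combine the edge $F$ (with $F\cap K=H$) with the $M_i$ directly. That direct route would fail, since $F$ may meet some $M_i$ outside $K$. Lemma~\ref{lem:trace-matching-general} already absorbs exactly this difficulty via its stability and replacement argument.
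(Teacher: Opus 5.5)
Your proof is correct, but it takes a different route from the paper.

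The paper never passes to traces. It takes an edge $H\cup X_0\in\F$ with $X_0\subseteq[n]\setminus K$. It then observes that the $s$ edges avoiding $Q$ leave at least $|(K\setminus H)\cup X_0|-rs=2r-1-2|H|\ge r-|H|$ vertices of $(K\setminus H)\cup X_0$ uncovered. It picks $Y$ among these and uses stability ($H\cup Y\preceq H\cup X_0$) to get an edge of $\F$ disjoint from the $s$ edges. This gives an $(s+1)$-matching in $\F$ directly.

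You instead note that the traces of the $s$ edges, together with $H$, are already $s+1$ pairwise disjoint members of $\F_0$; they are nonempty by \eqref{EQ:size-trace-F0-least-2}. You then invoke $\nu(\F_0)\le s$ from Lemma~\ref{lem:trace-matching-general}. That lemma is proved before this one using only stability, so there is no circularity. You are also right that combining $H\cup X_0$ with the $M_i$ directly would fail, and that the trace lemma handles exactly that issue.

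In effect, the paper re-runs in this special case the stability push that Lemma~\ref{lem:trace-matching-general} already carries out in general. Your version is shorter, and it shows that the hypothesis $|H|<r$ plays no real role. The paper needs it only for the count $2r-1-2|H|\ge r-|H|$, and the case $|H|=r$ is trivial anyway. The paper's version, in exchange, is self-contained: it does not rely on the trace-matching lemma or on nonemptiness of traces.
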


\begin{proof}
Suppose that some $r$-set $Q\supset H$ is missing.  By maximality, $\F\cup\{Q\}$ contains an
$(s+1)$-matching using $Q$; hence $\F$ contains $s$ disjoint edges avoiding $Q$.
By $H\in\F_0$, there is an edge $H\cup X_0\in \F$ with $X_0\subseteq[n]\setminus K$.
Let $S=(K\setminus H)\cup X_0$.  After deleting the vertices of those $s$ edges,
at least
\[
        |S|-rs=2r-1-2|H|\ge r-|H|
\]
vertices of $S$ remain. Choose $Y$ of size $r-|H|$ among the smallest such
vertices. Then $H\cup Y\preceq H\cup X_0$, so stability gives
$H\cup Y\in \F$.  This edge is disjoint from the $s$ edges above, contrary
to $\nu(\F)=s$.
\end{proof}



Since \(\mathcal F\) is stable and has property $\ONE$, we can choose an
$s$-matching $\M=\{E_1,\ldots,E_s\}$ avoiding vertex \(1\) such that
\(V(\mathcal M)\subseteq K\setminus\{1\}\).  
Among all such matchings contained in \(K\setminus\{1\}\), choose one for which
$$ R_{\mathcal F}:=K\setminus V(\mathcal{M}) $$
is lexicographically minimal. Then $|R_{\F}|=r-1$.

For each trace \(H\in\F_0\), let \(z(H)=| \{i\in[s]:H\cap E_i\neq\emptyset\}\) be the \emph{spread}  of $H$. The choice of $\M$ and Lemma~\ref{lem:trace-matching-general} implies 
\begin{align}\label{z(H)}
1\le z(H)\le r
\end{align}
For every \(\tau\in\binom{[s]}r\), define the \emph{local board} to be 
\[
V_\tau\coloneqq R_\F\cup(\cup_{i\in\tau}E_i).
\]
Put 
\[
 \F_0^\tau\coloneqq\{H\in\F_0:H\subseteq V_\tau\}. 
\]

Note that the number of local boards \(V_\tau\) containing \(H\) is $\binom{s-z(H)}{r-z(H)}$. On the other hand, by Lemma~\ref{fact:closure-general}, each  \(H\) contributes $\binom{q}{r-|H|}$ edges to \(\F\). We distribute this contribution equally among all local boards containing \(H\). This leads naturally to the weight
\[
w_{s,r}(H)
\coloneqq
\frac{\binom{q}{r-|H|}}
     {\binom{s-z(H)}{r-z(H)}}.
\]
A straightforward double counting  gives
\begin{equation}\label{eq:edge-denote-weight}
|\F|=\sum_{H\in\F_0}\binom{q}{r-|H|}=\sum_{\tau\in\binom{[s]}r}\sum_{H\in\F_0^\tau}w_{s,r}(H).
\end{equation}

Recall that $\mathcal{A}_r(n,s)$ has property ONE, whereas
$\mathcal{B}_r(n,s)$ does not.  We may assume that
$|\F|\ge a_r(s)=|\mathcal{A}_r(n,s)|$, since otherwise the desired bound is
immediate.  It therefore suffices to prove the stronger estimate
$|\F|\le |\mathcal{A}_r(n,s)|$.

To prove this stronger bound, we can also apply \eqref{eq:edge-denote-weight}  to
$\mathcal{A}_r(n,s)$, using the same $s$-matching $E_1,\ldots,E_s$.  Let
\[
        \mathcal A_0:=\{A\cap K:A\in\mathcal A_r(n,s)\},
        \qquad
        \mathcal A_0^\tau:=\{H\in\mathcal A_0:H\subseteq V_\tau\}.
\]
By the symmetry of $\mathcal{A}_r(n,s)$, the corresponding inner sums are
independent of the choice of $\tau$.  Consequently, it is enough to prove,
for every $\tau\in\binom{[s]}r$, the local inequality
\begin{equation}\label{eq:local-board-general}
 \sum_{H\in \F_0^\tau}w_{s,r}(H) \le \sum_{H\in \mathcal{A}_0^\tau}w_{s,r}(H).   
\end{equation}
Summing these inequalities over all $\tau\in\binom{[s]}r$ and using~\eqref{eq:edge-denote-weight} gives $|\F|\le |\mathcal{A}_r(n,s)|$. Hence the problem reduces naturally to a finite-board comparison.

\begin{theorem}\label{thm:finite-board-criterion}
Suppose that Assumption~\ref{ass:link-general} holds, and let $s_\Delta(r)$ and $\eta_r$
satisfy~\eqref{eq:transition-quantitative-assumptions}.  
Suppose that there is an integer $s_0\ge \max\{s_\Delta(r),s_{\mathrm{link}}(r)\}$ such that, for every $s\ge s_0$, every $n\in\{n_r(s)-1,n_r(s)\}$, and every stable maximal property-$\ONE$ $r$-graph $\F\subseteq\binom{[n]}r$ with $\nu(\F)=s$, the local board
inequality~\eqref{eq:local-board-general} holds for every
$\tau\in\binom{[s]}r$.  Then the EMC holds for all
$n\ge r(s+1)$ and all integers
\[
        s\ge \max\{s_{\rm link}(r),s_0,c_0s_0+c_1\},
\]
where $c_0$ and $c_1$ are as in Lemma~\ref{lem:one-to-all-r}.
\end{theorem}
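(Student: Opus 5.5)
The plan is to chain the reductions already established in Subsection~\ref{Critical-values} with the double-counting identity~\eqref{eq:edge-denote-weight}. First, fix $s\ge s_0$ and $n\in\{n_r(s)-1,n_r(s)\}$, and let $\F\subseteq\binom{[n]}r$ be stable, with property $\ONE$, $\nu(\F)=s$ and $|\F|=m^{\ONE}_r(n,s)$. By Lemma~\ref{fact:one-extremizer-maximal}, $\F$ is maximal, so the hypothesis of the theorem applies: the local inequality~\eqref{eq:local-board-general} holds for every $\tau\in\binom{[s]}r$. Summing over $\tau$ and using~\eqref{eq:edge-denote-weight} for both $\F$ and $\mathcal A_r(n,s)$ gives $|\F|\le|\mathcal A_r(n,s)|=a_r(s)\le\max\{a_r(s),b_r(n,s)\}$. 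Here I must check that the identity applies to $\mathcal A_r(n,s)$ with the same matching $E_1,\dots,E_s$ and the same $R_\F$. Since $\mathcal A_r(n,s)$ is the complete $r$-graph on $K$, every trace is an $r$-subset of $K$. Each such trace has $z(H)\le r$ with respect to the $E_i$ (the spread is computed only on the blocks $E_i$, and $R_\F$ is contained in every board), and it contributes $\binom{q}{0}=1$, so the double counting is valid verbatim. Hence $m^{\ONE}_r(n_r(s)-1,s)\le a_r(s)$. At $n=n_r(s)$ we get $m^{\ONE}_r(n_r(s),s)\le a_r(s)\le b_r(n_r(s),s)$ by the definition of $n_r(s)$.

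Second, since $s\ge s_0\ge s_{\rm link}(r)$, Lemma~\ref{fact:ONE-critical-reduction-r} extends this to $m^{\ONE}_r(n,s)\le\max\{a_r(s),b_r(n,s)\}$ for all $n\ge r(s+1)$ and all $s\ge s_0$. This is exactly the hypothesis of Lemma~\ref{lem:one-to-all-r}, with $s_0\ge s_\Delta(r)$. That lemma then yields $m_r(n,s)\le\max\{a_r(s),b_r(n,s)\}$ for all $n\ge r(s+1)$ and $s\ge\max\{s_{\rm link}(r),s_0,c_0s_0+c_1\}$. Together with the lower bound coming from the constructions $\mathcal A_r(n,s)$ and $\mathcal B_r(n,s)$, this is the EMC in that range.

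The main point needing care is the first step: confirming that the local-board framework of the preceding subsection really applies to every extremal property-$\ONE$ family. Three things must be verified. The first is that the identity~\eqref{eq:edge-denote-weight} is legitimate; this relies on Lemma~\ref{fact:closure-general} for traces of size less than $r$, on~\eqref{EQ:size-trace-F0-least-2}, and on the spread bound~\eqref{z(H)}, so that the weights are well defined. The second is that $q=n-N\ge1$ at both critical values; this needs $n_r(s)-1\ge rs+r$, which I would derive from $\Delta_s(rs+r-1)<0$, i.e.\ from the strict inequality $b_r(rs+r-1,s)<a_r(s)$. If needed, I would simply absorb the case $q=0$, where $\F_0=\F$ and the bound is immediate. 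The third is that we may assume $|\F|\ge a_r(s)$ as in the text; this is harmless.
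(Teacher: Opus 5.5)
Your proposal is correct and follows the paper's proof exactly: Lemma~\ref{fact:one-extremizer-maximal} gives a maximal extremizer, summing \eqref{eq:local-board-general} via \eqref{eq:edge-denote-weight} gives $m^{\ONE}_r(n,s)\le a_r(s)$ at both critical values, and then Lemma~\ref{fact:ONE-critical-reduction-r}, Lemma~\ref{lem:one-to-all-r} and the two constructions finish the argument. One small slip in your side remark: $q\ge1$ at $n=n_r(s)-1$ is equivalent to $\Delta_s(rs+r)<0$, i.e.\ $b_r(r(s+1),s)<a_r(s)$, not to $\Delta_s(rs+r-1)<0$ (which holds trivially and says nothing about $n_r(s)$); this is immaterial, since your fallback of handling $q=0$ directly via $|\F|\le\binom{N}{r}$ covers that case.
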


\begin{proof}
Fix $s\ge s_0$ and $n\in\{n_r(s)-1,n_r(s)\}$.  By
Lemma~\ref{fact:one-extremizer-maximal}, an extremizer for
$m_r^{\ONE}(n,s)$ may be taken to be stable and maximal.  Summing the assumed
local inequalities~\eqref{eq:local-board-general} and using~\eqref{eq:edge-denote-weight} gives
\[
        m_r^{\ONE}(n,s)\le a_r(s).
\]
At $n=n_r(s)-1$ this is the required clique-side bound, while at
$n=n_r(s)$ it is at most $b_r(n_r(s),s)$ by the definition of $n_r(s)$.  Lemma~\ref{fact:ONE-critical-reduction-r} therefore extends the
property-$\ONE$ estimate to every $n\ge r(s+1)$.  The conclusion now follows
from Lemma~\ref{lem:one-to-all-r}, together with the two constructions giving
the matching lower bound.
\end{proof}

At the end of section, we remark that the competing family $\mathcal{B}_r(n,s)$ is not discarded by this
reduction: stable families without property $\ONE$ are handled by the
deletion chain in Lemma~\ref{lem:one-to-all-r}.  Thus the finite-board
analysis below only needs to treat the $\mathcal{A}_r$-type,
property-$\ONE$ branch.

\section{The $19$-board setup  and proof strategy}\label{sec:board-setup}
For the remainder of the paper, we specialize to the case $r=4$. 
Let $\mathcal{F}\subseteq \binom{[n]}{4}$ be a stable maximal family 
with matching number $\nu(\mathcal{F})=s$ and property ONE.
Note that
\[
        a_4(s)\coloneqq\binom{4s+3}4,
        \qquad
        b_4(n,s)\coloneqq\binom n4-\binom{n-s}4,
        \qquad
        n_4(s)\coloneqq\min\{n\ge4(s+1):b_4(n,s)\ge a_4(s)\}.
\]
\begin{theorem}(Frankl, R\H{o}dl and Ruci\'{n}ski~\cite{FRR2017}) \label{lem:FRR2017}
For all $s\ge 33$ and $n\ge 3s+3$, $m_3(n,s)=\max\{a_3(s), b_3(n,s)\}$.
\end{theorem}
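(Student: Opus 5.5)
We would prove the statement by specializing the finite-board criterion of Section~\ref{sec:general-reduction}, Theorem~\ref{thm:finite-board-criterion}, to $r=3$. The two inputs then become (i) a graph link bound and (ii) a finite optimization on a $3^2+3-1=11$-vertex board. The closing step would check the explicit thresholds so that the conclusion covers every $s\ge 33$ and every $n\ge 3s+3$. Note that $n\ge 3s+3=3(s+1)$ is exactly the admissible range of Conjecture~\ref{Erdos-Matching-Conjecture}. Lemma~\ref{fact:critical-reduction-r} then reduces everything to the two critical values $n\in\{n_3(s)-1,n_3(s)\}$, with small $n$ handled by padding with isolated vertices.

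\emph{Step 1 (link input).} Assumption~\ref{ass:link-general} for $r=3$ asks that a graph $J$ on $m\ge n_3(t)$ vertices with $\nu(J)\le t$ has at most $\binom m2-\binom{m-t}2$ edges. We take this from the Erd\H{o}s--Gallai theorem \cite{ErdosGallai1959}, which gives $|J|\le\max\{\binom{2t+1}2,\binom m2-\binom{m-t}2\}$. It then suffices to show that the cover term dominates once $m\ge n_3(t)$. Here $\rho_3\approx3.47$ solves $3x^2-3x+1=27$, so $n_3(t)\approx3.47t$. Since $\binom m2-\binom{m-t}2\ge\binom{2t+1}2$ already holds for $m\ge\frac52 t+O(1)$, a one-line estimate gives $s_{\rm link}(3)=1$ (or a tiny constant). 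We also verify~\eqref{eq:transition-quantitative-assumptions} for $r=3$ explicitly: $n_3(s)-n_3(s-1)\ge2$, and $n_3(s)-1\ge\rho_3s+\eta_3$ for an explicit $\eta_3$. This is done by the expansion in Lemma~\ref{lem:critical-gap-general}, made effective with exact polynomial bounds, which yields an explicit $s_\Delta(3)$.

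\emph{Step 2 (the 11-vertex board).} Fix a stable maximal property-$\ONE$ family $\F$ at $n\in\{n_3(s)-1,n_3(s)\}$. Set $N=3s+2$ and $q=n-N\approx0.47s$, and take the matching $E_1,\dots,E_s$ together with $R_\F$, $|R_\F|=2$. By~\eqref{EQ:size-trace-F0-least-2}, traces have size $2$ or $3$. Their spread satisfies $1\le z\le 3$ by~\eqref{z(H)}. The weights are
\[
w(H)=\binom{q}{3-|H|}\Big/\binom{s-z(H)}{3-z(H)}.
\]
After normalizing by $s^{3}$ these weights converge to explicit constants depending only on $(|H|,z(H))$ and on $\lambda=q/s\approx\rho_3-3$. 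The family $\F_0^\tau$ lives on $V_\tau=R_\F\cup E_{i}\cup E_j\cup E_k$. By Lemma~\ref{lem:trace-matching-general} it is stable with respect to the induced order, and it contains no $4$ pairwise disjoint traces: a board matching of size $4$ together with the other $s-3$ edges $E_\ell$ would give $s+1$ disjoint traces. Lemma~\ref{fact:closure-general} says that a pair trace forces all its $3$-supersets. The local inequality~\eqref{eq:local-board-general} thus becomes a finite weighted problem over stable up-closed mixed families of $2$- and $3$-subsets of an $11$-set with matching number at most $3$. The comparison family is the trace of $\mathcal A_3(n,s)$, namely all $3$-subsets of $K$.

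\emph{Step 3 (solving the board problem).} We would enumerate the possible pair-trace structures; in a stable family these are determined by a few maximal pairs. For each, we bound the admissible triples via the $\nu\le3$ constraint by a Kleitman-type count on $11$ vertices: the clique $\binom{11}{3}$ is optimal when there are no pairs. The gain from pairs ($q$ edges each, weighted by $1/\binom{s-z}{1}$ or similar) must then be traded against the triples they forbid. The case analysis splits according to whether a pair containing the first vertex of $R_\F$ is present, since such a pair acts as a top star.

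\emph{Step 4 (thresholds).} Combining Steps~2 and~3 gives $m^{\ONE}_3\le\max\{a_3,b_3\}$ for $s\ge s_0$. Lemma~\ref{lem:one-to-all-r} then yields all families for $s\ge c_0s_0+c_1$, with $c_0=4/(\rho_3-1)\approx1.62$.

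The main obstacle is twofold. First, the local inequality in Step~3 must hold with exact (non-asymptotic) weights. Second, the resulting constant $c_0s_0+c_1$ need not reach down to $33$. For the first point we would treat the error terms in the weights explicitly as rational functions of $s$ and verify the finitely many configurations in exact arithmetic. For the second, if the deletion chain loses too much, we would instead cite the direct argument of \cite{FRR2017} for $33\le s<c_0s_0+c_1$, or sharpen Lemma~\ref{lem:one-to-all-r} by applying the board inequality along the whole chain $\F'_q$.
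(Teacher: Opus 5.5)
The paper does not prove this statement: it is imported from \cite{FRR2017} and used only to supply Assumption~\ref{ass:link-general} for $r=4$. Your plan to rederive it from Theorem~\ref{thm:finite-board-criterion} at $r=3$ is not circular, since the link input is Erd\H{o}s--Gallai, and your check that $\binom m2-\binom{m-t}2\ge\binom{2t+1}2$ once $m\ge(5t+3)/2$ is correct. But as written it is a programme rather than a proof.

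The first gap is Step~3, which is the entire mathematical content and is not carried out. You must prove an exact weighted inequality on every $11$-board: the present pairs must be paid for by the missing triples. The pairs have weight $q/(s-2)$ (wide) or $q/\binom{s-1}{2}$ (spread $1$); the triples have weight $1$, $1/(s-2)$, $1/\binom{s-1}{2}$ according to spread. A Kleitman-type count of triples under $\nu\le3$ says nothing about this trade-off. What drives it is the following, assembled into an explicit charging scheme or certificate as in Sections~\ref{sec:layer1}--\ref{sec:layer3}:
\begin{itemize}
\item the $r=3$ analogue of Lemma~\ref{lem:seeds-r4}, which you would have to state and prove;
\item closure (Lemma~\ref{fact:closure-general});
\item the no-four-disjoint-traces obstruction.
\end{itemize}
Moreover, this must hold with exact weights at small $s$, where it is tighter than the asymptotics suggest. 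For instance, $n_3(20)=73$, so $q=11$ and $q/(s-2)=11/18\approx0.61$, not the limiting value $\rho_3-3\approx0.486$. (Note $\rho_3=(3+\sqrt{321})/6\approx3.486$, not $3.47$.)

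The second gap is the threshold. Lemma~\ref{lem:one-to-all-r} only yields $s\ge c_0s_0+c_1$ with $c_0=4/(\rho_3-1)\approx1.61$. This is because the first property-$\ONE$ member $\F'_{q_0}$ of the deletion chain may have matching number as small as $s-q_0\approx\frac{\rho_3-1}{4}s\approx0.62s$. To reach $s=33$ you therefore need the board inequality, at both critical values, for every matching number from about $20$ upward.

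Neither fallback closes this. Citing \cite{FRR2017} for $33\le s<c_0s_0+c_1$ assumes the very statement being proved. ``Applying the board inequality along the chain'' does not help either. The members $\F'_q$ with $q<q_0$ lack property $\ONE$, and the board machinery applies only to property-$\ONE$ families. So you are forced to use it at matching number $s-q_0$, which is exactly the small range in question.

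Hence, unless Step~3 is actually verified in exact arithmetic for all $s$ from about $20$ on, the proposal does not reach the stated range $s\ge33$. The alternative would be to replace the lossy deletion step for non-$\ONE$ families by a different argument.
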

Indeed, if $t\ge33$ and $n\ge n_4(t)$, then $n\ge4t+4$, and $b_3(4t+4,t)-a_3(t)
=\frac{t(5t^2+18t+10)}{3}>0$.
Since $b_3(n,t)$ is increasing in $n$, Theorem~\ref{lem:FRR2017} yields $
m_3(m,t)=b_3(m,t)$,
which is precisely Assumption~\ref{ass:link-general} for $r=4$.
For $H\in \F_0$, it follows from \eqref{EQ:size-trace-F0-least-2} that $|H| \ge 2$. We  write $ w(H)=\frac{\binom q{4-|H|}}{\binom{s-z(H)}{4-z(H)}}$ for the weight $w_{s, 4}(H)$ defined in Section~\ref{sec:general-reduction}.

\begin{lemma}\label{lem:r4-explicit-threshold}
For every integer \(s\ge 3481\), we have 
\[
n_4(s)\le 4s+3+\frac{12}{25}(s-3),~n_4(s)-1\ge 4s+4~\text{and}~n_4(s)-n_4(s-1)\ge 2.
\]
Consequently, if $n\in\{n_4(s)-1,n_4(s)\}$, $q:=n-(4s+3)$, then
\[
        \frac{q}{s-3}\le \frac{12}{25}.
\]
In particular, the following weight estimates hold:
\begin{align*}
        |H|=3,\ z(H)=3
        &\implies
        w(H)\le \frac{12}{25},\\
        |H|=2,\ z(H)=2
        &\implies
        w(H)\le \frac{144}{625},\\
        |H|=3,\ z(H)=2
        &\implies
        (s-3)w(H)\le \frac{24}{25},\\
        |H|=2,\ z(H)=1
        &\implies
        (s-3)w(H)\le \frac{432}{625},\\
        |H|=3,\ z(H)=1
        &\implies
        \binom{s-2}{2}w(H)\le \frac{36}{25}.
\end{align*}
\end{lemma}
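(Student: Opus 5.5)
The plan is to reduce each assertion to the sign of an explicit polynomial in $s$, using the increasing function $\Delta_s(n)=b_4(n,s)-a_4(s)$ from the proof of Lemma~\ref{lem:critical-gap-general}. Recall that $n_4(s)$ is the least $n\ge 4s+4$ with $\Delta_s(n)\ge0$.

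\emph{Upper bound on $n_4(s)$.} Put $m=\lfloor 12(s-3)/25\rfloor$ and $n^*=4s+3+m$. It suffices to show $\Delta_s(n^*)\ge 0$. To remove the floor, I write $s-3=25k+j$ with $0\le j\le 24$, so that $m=12k+\lfloor 12j/25\rfloor$. For each of the $25$ residues, $\Delta_s(n^*)$ becomes an explicit quartic in $k$ with rational coefficients. Alternatively, since $\Delta_s$ is increasing in $n$, I can bound below by the single real value $n=4s+3+(12(s-3)-24)/25$ and expand $\Delta_s$ there as a polynomial in $s$.

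The leading coefficient is $\tfrac{s^4}{24}f_4(4.48)$, which is positive but tiny: $4.48^4-3.48^4-256\approx 0.1$. Thus $\rho_4\approx4.4797$, and $12/25$ exceeds $\rho_4-4$ by only a sliver. The cubic and lower-order terms, which come from the $+3$ shifts and from the $-3$ and floor losses, are negative and dominate until $s$ is in the thousands. This is where the threshold $3481$ comes from.

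I expect this step to be the main obstacle. The plan is to expand exactly in rational arithmetic, write the resulting polynomial as $P(s)=\alpha s^4-\beta s^3-\dots$, and show $P(s)>0$ for $s\ge3481$. For this I would substitute $s=3481+t$ and check that all coefficients in $t$ are nonnegative, or else verify $P(3481)>0$ together with $P'>0$ on $[3481,\infty)$.

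\emph{Lower bound $n_4(s)-1\ge 4s+4$.} This amounts to $\Delta_s(4s+4)<0$, that is, $\binom{4s+4}4-\binom{3s+4}4<\binom{4s+3}4$, or equivalently $\binom{4s+3}3<\binom{3s+4}4$. For $s\ge 3481$ this is a one-line polynomial comparison, since the right-hand side has order $s^4$ and the left-hand side has order $s^3$.

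\emph{Gap $n_4(s)-n_4(s-1)\ge2$.} As in Lemma~\ref{lem:critical-gap-general}, with $N=n_4(s)$ it suffices to show $\Delta_{s-1}(N-2)\ge\Delta_s(N)$. By the displayed identity there, the difference equals
\[
-\binom{N-1}3-\binom{N-2}3+\binom{N-s-1}3+\sum_{i=-1}^{2}\binom{4s+i}3.
\]
I would evaluate this for $N$ in the window $4s+4\le N\le 4s+3+\tfrac{12}{25}(s-3)$, which is already established above. The expression is a cubic in $(s,N)$. Its leading part is $\tfrac{s^3}{6}\bigl(256+(x-1)^3-2x^3\bigr)$ with $x=N/s\in[4,4.48]$, and this stays positive on that interval: the value is $\approx 0.4$ at $x=4.48$ and about $155$ at $x=4$. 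Because the margin at the top of the window is again small, I would bound the lower-order terms crudely, using monotonicity in $N$ to check the two endpoints, and confirm positivity for $s\ge3481$.

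\emph{Consequences.} Since $q=n-(4s+3)\le n_4(s)-(4s+3)$, we get $q/(s-3)\le 12/25$. The weight bounds then follow directly from $w(H)=\binom q{4-|H|}/\binom{s-z}{4-z}$ together with $q\le\tfrac{12}{25}(s-3)$:
\begin{itemize}
\item for $(|H|,z)=(3,3)$, $w=q/(s-3)$;
\item for $(2,2)$, $w=\dfrac{q(q-1)}{(s-2)(s-3)}\le \bigl(\tfrac{q}{s-3}\bigr)^2$;
\item for $(3,2)$, $(s-3)w=\dfrac{2q}{s-2}\le\dfrac{2q}{s-3}$;
\item for $(2,1)$, $(s-3)w=\dfrac{3q(q-1)}{(s-1)(s-2)}\le 3\bigl(\tfrac{q}{s-3}\bigr)^2$;
\item for $(3,1)$, $\binom{s-2}2w=\dfrac{3q}{s-1}\le\dfrac{3q}{s-3}$.
\end{itemize}
Substituting $12/25$ gives the five stated constants.
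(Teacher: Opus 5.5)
Your plan is the paper's argument. You evaluate $\Delta_s$ at $4s+3+\lfloor 12(s-3)/25\rfloor=\lfloor (112s+39)/25\rfloor$, check $\Delta_s(4s+4)<0$, and show $\Delta_{s-1}(N-2)>\Delta_s(N)$ by monotonicity in $N$ on $4s+4\le N\le (112s+39)/25$. You then read off the five weight bounds from $q\le\frac{12}{25}(s-3)$, and those weight computations are correct.

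However, the upper bound on $n_4(s)$ does not go through as you propose to verify it. Your ``alternative'' of evaluating at the single value $(112s+15)/25$ fails. Exactly,
\[
\Delta_s\Bigl(\tfrac{112s+15}{25}\Bigr)=\tfrac{s}{125000}\bigl(829s^3-2895710s^2-818825s-116750\bigr),
\]
and since $829\cdot 3493<2895710$, this is negative for every $s\le 3493$. For the same reason, in your 25-residue version you cannot check positivity uniformly from $s=3481$ in every class, whether by substituting $s=3481+t$ or via $P(3481)>0$ and $P'>0$.

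The missing observation, which the paper uses, is that the floor loss equals $24/25$ only when $s\equiv5\pmod{25}$. Such $s\ge3481$ automatically satisfy $s\ge3505$, where the cubic above is positive. In every other class the loss is at most $23/25$, and evaluating at $(112s+16)/25$ gives $\frac{s}{375000}(2487s^3-8567702s^2-2509011s-347566)$, which is positive and increasing for $s\ge 3481$. This is not cosmetic: at $s=3480\equiv5\pmod{25}$ the first inequality of the lemma is actually false, which is exactly why the threshold is $3481$.

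A smaller correction concerns the gap step: the margin there is not small. At $x=\rho_4$ the leading coefficient $256+(x-1)^3-2x^3$ equals $(\rho_4-2)\bigl(\rho_4^3-(\rho_4-1)^3\bigr)\approx118$, and it is about $118.3$ at $x=4.48$, not $0.4$. So once the difference is shown to be decreasing in $N$ for $N\ge4s+4$, checking the single endpoint $N=(112s+39)/25$ suffices with room to spare, as in the paper. (Also, $\rho_4\approx4.4792$, not $4.4797$.)
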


\begin{proof}
For real \(x\), define $\Delta_s(x):= \binom{x}{4}-\binom{x-s}{4}-\binom{4s+3}{4}$.
Set
\[
        m(s):=
        \left\lfloor \frac{112s+39}{25}\right\rfloor=\frac{112s+39-\varepsilon_s}{25},
        \qquad
        0\le \varepsilon_s\le 24.
\]
We first prove that $\Delta_s(m(s))\ge 0$, which implies 
\begin{equation*}
        n_4(s)\le m(s)
        \le
        \frac{112s+39}{25}
        =
        4s+3+\frac{12}{25}(s-3).
\end{equation*}
Observe that \[
\Delta_s'(x)= \frac{s}{12} \left( 6x^2-6sx+2s^2-18x+9s+11\right)
\] 
is increasing and greater than $0$ for \(x\ge 4s+4\). So, \(\Delta_s(x)\) is increasing.

If \(\varepsilon_s\le 23\), then $m(s)\ge \frac{112s+16}{25}>4s+4$.
Hence
\begin{align*}
     \Delta_s(m(s))\ge \Delta_s\left(\frac{112s+16}{25}\right)=\frac{s}{375000}P_1(s),
\end{align*}
where $P_1(s) =2487s^3-8567702s^2-2509011s-347566$.
It follows from $P_1'(s)>0$  for $s\ge 3481$ and $P_1(3481)>0$ 
that $\Delta_s(m(s))\ge 0$.

Otherwise, we have  \(\varepsilon_s=24\), and then 
\[
        112s+39\equiv 24 \pmod {25}.
\]
This is equivalent to $ s\equiv 5\pmod {25}$.
Thus, under \(s\ge 3481\), one has \(s\ge 3505\). 
\[
        \Delta_s(m(s))
        =
        \Delta_s\left(\frac{112s+15}{25}\right)
        =
        \frac{s}{125000}P_2(s),
\]
where $P_2(s) =829s^3-2895710s^2-818825s-116750$.
It follows from $ P_2'(s)>0$ for $s\ge 3505$ and $P_2(3505)>0$ that  $\Delta_s(m(s))\ge 0$.

Next, by 
$$a_4(s)-b_4(4s+4,s)=\binom{4s+3}{4}-\left(\binom{4s+4}{4}-\binom{3s+4}{4}\right)=\frac{s(s-1)(81s^2+95s+26)}{24}>0.$$
Hence 
\[
n_4(s)-1\ge 4s+4.
\]

Now we prove $n_4(s) - n_4(s-1) \ge 2$. 
Let $N:=n_4(s)$. Then $\Delta_s(N)\ge 0$. As in the proof of Lemma  \ref{lem:critical-gap-general}, it 
suffices to show that $\Delta_{s-1}(N-2)- \Delta_s(N)> 0$.
A direct subtraction gives
\[
\begin{aligned}
        6\bigl(\Delta_{s-1}(N-2)-\Delta_s(N)\bigr)
        =&-N^3-3N^2s+9N^2+3Ns^2+12Ns      \\
        &-26N+255s^3-102s^2+45s+18.
\end{aligned}
\]
Denote the right-hand side by \(F_s(N)\). For fixed \(s\),
\begin{align*}
        \frac{\partial F_s}{\partial N}
        =-3N^2-6sN+18N+3s^2+12s-26<0~~\text{for}~~N\ge 4s+4,
\end{align*}

Therefore \(F_s(N)\) is decreasing for \(N\ge 4s+4\).
Recall that $N=n_4(s)\le \frac{112s+39}{25}$.
We have 
\[
        F_s(N)
        \ge
        F_s\left(\frac{112s+39}{25}\right) =
        \frac{
        1848647s^3+18927s^2+516094s-69594
        }{15625}>0, 
\]
and then $\Delta_{s-1}(N-2)-\Delta_s(N)>0$.

Now let $n\in\{n_4(s)-1,n_4(s)\}$, $q:=n-(4s+3)$. Then $q\le  n_4(s)-4s-3\le\frac{12}{25}(s-3)$,
and therefore
\[
        \frac{q}{s-3}\le \frac{12}{25}.
\]
The weight estimates for $H$ follow directly from the following obvious inequalities.
\begin{align*}
        \frac{q}{s-3}&\le \frac{12}{25};\\
        \frac{\binom q2}{\binom{s-2}{2}}&=\frac{q(q-1)}{(s-2)(s-3)}\le \left(\frac{12}{25}\right)^2;\\
        (s-3)\frac{q}{\binom{s-2}{2}}&=\frac{2q}{s-2}\le 2\cdot \frac{12}{25};\\
        (s-3)\frac{\binom q2}{\binom{s-1}{3}}&=\frac{3q(q-1)}{(s-1)(s-2)}\le 3\left(\frac{12}{25}\right)^2;\\
        \binom{s-2}{2}\frac{q}{\binom{s-1}{3}} &=\frac{3q}{s-1}\le 3\cdot\frac{12}{25}.
\end{align*}
This completes the proof. 
\end{proof}

We retain all the notation from the preceding discussion of $r$-graphs, specialized throughout to the case $r=4$.
Write $E_i=\{e_{i,1}<e_{i,2}<e_{i,3}<e_{i,4}\}$ and $R_{\F}= K\setminus\bigcup_{i=1}^sE_i=\{1,\delta_1,\delta_2\}$ with $\delta_1<\delta_2$.
For $i\in[s]$ and $j\in[4]$, we call $R_{\mathcal F}\cup\{e_{i,j}\}$ the \emph{\(e_{i, j}\)-seed}.
The next lemma shows that the $R_{\mathcal F}\cup\{e_{i,u}\}$ are always present for $u\in\{1,2\}$; we refer to them as the seeds.

The following statement requires $q\ge1$.  This is exactly the situation in the critical applications: if $s\ge3481$ and $n\in\{n_4(s)-1,n_4(s)\}$, then Lemma~\ref{lem:r4-explicit-threshold} gives $n\ge4(s+1)$ and hence $q=n-(4s+3)\ge1$.

\begin{lemma}\label{lem:seeds-r4}
Assume $q\ge1$.  For every $i$, $R_{\F}\cup\{e_{i,1}\}$ and $R_{\F}\cup\{e_{i,2}\}$ belong to $\F_0$.
\end{lemma}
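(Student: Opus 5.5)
The plan is to argue by contradiction, combining maximality of $\F$ with stability and the lexicographic minimality of $R_\F$. Fix $i$ and $u\in\{1,2\}$, and put $Q=R_\F\cup\{e_{i,u}\}=\{1,\delta_1,\delta_2,e_{i,u}\}$. This is a $4$-subset of $K$, so $Q\in\F_0$ is equivalent to $Q\in\F$. Since $\{1,\delta_1,\delta_2,e_{i,1}\}\preceq\{1,\delta_1,\delta_2,e_{i,2}\}$, stability shows that the case $u=2$ implies the case $u=1$. Hence it suffices to prove $Q\in\F$ for $u=2$.

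Suppose $Q\notin\F$. By maximality, $\F\cup\{Q\}$ contains an $(s+1)$-matching through $Q$. Thus $\F$ contains $s$ pairwise disjoint edges $G_1,\dots,G_s$ avoiding $Q$, and in particular avoiding $1$.

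\textbf{Step 1: normalize $G_1,\dots,G_s$ by stability.} Among all such $s$-matchings avoiding $Q$, I would take one whose vertex multiset is as small as possible, by the same kind of replacement used in the proof of Lemma~\ref{lem:trace-matching-general}. If some free vertex $v\notin Q$ is smaller than a used vertex $y$, replace $y$ by $v$; stability keeps the new edge in $\F$. This leaves the matching occupying the $4s$ smallest vertices of $[n]\setminus Q$. Since $|K\setminus Q|=4s-1$, the union $\bigcup_j G_j$ is then exactly $(K\setminus Q)\cup\{x\}$ with $x=4s+4$. Here the hypothesis $q\ge1$ is used: it guarantees $x\le n$.

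\textbf{Step 2: produce an extra disjoint edge.} The only vertices not covered by $G_1,\dots,G_s$ inside $K\cup\{x\}$ are $1,\delta_1,\delta_2,e_{i,2}$. I would instead exploit the edge $G_j$ containing $x$. Its trace $G_j\setminus\{x\}$ is a $3$-subset of $K$. By stability, $(G_j\setminus\{x\})\cup\{1\}\in\F$. Combined with the remaining $s-1$ edges $G_{j'}$, this gives an $s$-matching inside $K$ that misses exactly $\{x\}$'s replacement slot; its leftover set is a $3$-subset of $K$ drawn from $\{\delta_1,\delta_2,e_{i,2}\}$ together with a vertex of $G_j$. The goal is to show this leftover set is either lexicographically smaller than $R_\F$, or avoids $1$ while leaving room for an additional edge. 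The first alternative contradicts the choice of $\M$; the second contradicts $\nu(\F)=s$.

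\textbf{Step 3: the main obstacle.} The hardest step is controlling where $e_{i,2}$ sits relative to $\delta_1,\delta_2$ and to the vertices of $G_j$. I would first record the consequences of the lexicographic minimality of $R_\F$ applied to swaps inside $R_\F\cup E_i$. For instance, if $\{\delta_1,\delta_2,e_{i,3},e_{i,4}\}\in\F$, the leftover would become $\{1,e_{i,1},e_{i,2}\}$, and this configuration must not be lexicographically smaller than $R_\F$. Such relations give order constraints like $\delta_1<e_{i,1}$, or else force certain sets out of $\F$. I would then split into cases according to whether $\delta_2<e_{i,2}$. In each case the comparison $\preceq$ should let me dominate $Q$ by an edge already known to lie in $\F$, contradicting $Q\notin\F$ via stability. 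If this case analysis becomes unwieldy, the fallback is to run the exchange directly on $\M$. Replace $E_i$ by a $4$-subset of $R_\F\cup E_i$ that avoids $1$ and $e_{i,2}$, and compare the resulting leftover lexicographically with $R_\F$, using the closure property Lemma~\ref{fact:closure-general} for the trace obtained from $G_j$.
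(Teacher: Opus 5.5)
Your reduction to $u=2$ and your Step~1 are correct: after pushing, the $s$ edges avoiding $Q$ cover exactly $(K\setminus Q)\cup\{x\}$ with $x=4s+4$, and this is where $q\ge1$ is needed. The gap is in what follows. In Step~2 you replace $x$ by $1$ in the edge $G_j\ni x$. The new $s$-matching passes through vertex $1$, and its leftover in $K$ is exactly $\{\delta_1,\delta_2,e_{i,2}\}$; no vertex of $G_j$ is freed. Neither of your two alternatives can then occur:
\begin{itemize}
\item The lexicographic minimality of $R_\F$ is taken only over $s$-matchings contained in $K\setminus\{1\}$, so this matching is not a competitor. In any case, a leftover without $1$ is lexicographically larger than $\{1,\delta_1,\delta_2\}$.
\item There is no room for an extra edge. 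Every $4$-set of still-free vertices dominates $Q$ coordinatewise (the smallest such set is $\{\delta_1,\delta_2,e_{i,2},x\}$), so none of them is an edge, by stability and $Q\notin\F$.
\end{itemize}
Step~3 is then a list of intentions rather than an argument. When $\delta_2<e_{i,2}$ the dominating edge is $E_i$ itself, since $1<e_{i,1}$, $\delta_1<e_{i,2}$, $\delta_2<e_{i,3}$ and $e_{i,2}<e_{i,4}$. When $e_{i,2}<\delta_2$, however, you name no edge dominating $Q$ and give no other argument, and that case is the real content of the lemma.

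The missing idea is to substitute $\delta_2$ for $x$, not $1$. If $e_{i,2}<\delta_2$, set $G_j':=(G_j\setminus\{x\})\cup\{\delta_2\}$. Then $G_j'\preceq G_j$, so $G_j'\in\F$, and $G_j'$ is disjoint from the other $G_{j'}$ because $\delta_2\in Q$. Together with the other $G_{j'}$ it forms an $s$-matching inside $K\setminus\{1\}$. Its leftover is $\{1,\delta_1,e_{i,2}\}$, which is lexicographically smaller than $\{1,\delta_1,\delta_2\}$, contradicting the choice of $\M$.

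With this repair, and with $E_i$ for the case $\delta_2<e_{i,2}$, your route is complete. It is also slightly more direct than the paper's. In the case $e_{i,2}<\delta_2$, the paper applies maximality to $\{1,\delta_1,e_{i,2},u\}$ with $u\notin K$. It pushes the resulting matching into $K\setminus\{1,\delta_1,e_{i,2}\}$, reaching the same lexicographic contradiction, and concludes $\{1,\delta_1,e_{i,2}\}\in\F_0$. It then needs Lemma~\ref{fact:closure-general} to get $Q\in\F$. Applying maximality to $Q$ itself, as you set out to do, avoids the closure lemma.
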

\begin{proof}
If $e_{i,2}>\delta_2$, then the increasing list of $R_{\F}\cup\{e_{i,2}\}$ is coordinatewise no larger than that of $E_i$,
because
\[
        1< e_{i,1},\qquad \delta_1<e_{i,2},\qquad \delta_2<e_{i,2}<e_{i,3},\qquad e_{i,2}<e_{i,4}.
\]
Thus stability gives $R_{\F}\cup\{e_{i,2}\}\in \F_0$.

Assume $e_{i,2}<\delta_2$.  We first show that $\{1,\delta_1,e_{i,2}\}\in \F_0$.  
Suppose not.  Since $q \ge 1$, choose an outside vertex $u\notin K$.  
Then $\{1,\delta_1,e_{i,2},u\}$ is not an edge.  
By maximality, $\F\cup\{1,\delta_1,e_{i,2},u\}$ contains an $(s+1)$-matching using $\{1,\delta_1,e_{i,2},u\}$, so $\F$ contains an $s$-matching disjoint from $\{1,\delta_1,e_{i,2}\}$. List the \(4s\) vertices used by this matching increasingly as $u_1<\cdots<u_{4s}$.

We now push this matching into \(K\setminus\{1,\delta_1,e_{i,2}\}\). 
List the vertices of \(K\setminus\{1,\delta_1,e_{i,2}\}\) increasingly as $v_1<\cdots<v_{4s}$.
Since \(K\setminus\{1,\delta_1,e_{i,2}\}\) consists of the \(4s\) smallest vertices of
\([n]\setminus\{1,\delta_1,e_{i,2}\}\), we have $v_j\le u_j$ for every $j$.
Replace each used vertex \(u_j\) by \(v_j\). For each edge of the matching, the resulting \(4\)-set is
coordinatewise no larger than the original edge; hence it is still an edge of \(\F\) by stability. The new \(s\)-matching uses exactly the vertices of $K\setminus\{1,\delta_1,e_{i,2}\}$.
Thus it is an admissible property-ONE \(s\)-matching contained in \(K\), whose remaining vertices are
$\{1,\delta_1,e_{i,2}\}$.
This contradicts the choice of \(E_1,\ldots,E_s\), for which $\{\delta_1,\delta_2\}$ was chosen lexicographically minimal.
Hence $\{1,\delta_1,e_{i,2}\}\in \F_0 $.
By Lemma~\ref{fact:closure-general} every \(4\)-set containing this trace is an edge of \(\F\). In particular,
\[
R_{\F}\cup\{e_{i,2}\}
=
\{1,\delta_1,\delta_2,e_{i,2}\}
\in \F.
\]
Since this set is contained in \(K\), we have $R_{\F}\cup\{e_{i,2}\}\in \F_0 $.
Finally, $R_{\F}\cup\{e_{i,1}\}\preceq R_{\F}\cup\{e_{i,2}\}$. Stability gives $R_{\F}\cup\{e_{i,1}\}\in \F$.
Again this \(4\)-set is contained in \(K\), so $R_{\F}\cup\{e_{i,1}\}\in \F_0 $.
This completes the proof of the lemma. 
\end{proof}

\begin{lemma}\label{lem:local-matching-r4}
For $\tau\in\binom{[s]}4$, the  $19$-board $V_{\tau}$ does not contain five pairwise disjoint traces from $\F_0$.
\end{lemma}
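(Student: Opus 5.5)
We argue by contradiction: five pairwise disjoint traces inside $V_\tau$, together with the $s-4$ matching edges $E_i$, $i\notin\tau$, would give $s+1$ pairwise disjoint traces in $\F_0$. This contradicts Lemma~\ref{lem:trace-matching-general}.

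Suppose $H_1,\ldots,H_5\in\F_0^\tau$ are pairwise disjoint, all contained in
\[
V_\tau=R_\F\cup\bigcup_{i\in\tau}E_i .
\]
For each $i\in[s]\setminus\tau$, the edge $E_i$ lies in $K$, so $E_i\cap K=E_i$ and $E_i\in\F_0$. The sets $E_i$ with $i\notin\tau$ lie in $K\setminus V_\tau$, because the $E_j$ are pairwise disjoint and disjoint from $R_\F$. They are therefore disjoint from each $H_j$ and from one another. The collection $\{H_1,\ldots,H_5\}\cup\{E_i:i\notin\tau\}$ thus consists of
\[
5+(s-4)=s+1
\]
pairwise disjoint members of $\F_0$. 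This contradicts $\nu(\F_0)=s$ from Lemma~\ref{lem:trace-matching-general}.

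There is no real obstacle. The only point needing care is that $\F_0$ is the trace family: $E_i$ itself is a trace because it lies in $K$, and the matching number is computed inside $\F_0$ (with traces of size at least $2$ by \eqref{EQ:size-trace-F0-least-2}, although that is not needed here). All the substantive work, namely lifting disjoint traces to disjoint edges through outside vertices by using stability, was already done in Lemma~\ref{lem:trace-matching-general}.
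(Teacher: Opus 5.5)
Your proposal is correct and is essentially the paper's own argument: the five disjoint traces in $V_\tau$ together with the $s-4$ edges $E_i$, $i\notin\tau$ (which are traces because $E_i\subseteq K$), give $s+1$ pairwise disjoint members of $\F_0$, contradicting Lemma~\ref{lem:trace-matching-general}. Your extra checks, that $E_i\in\F_0$ and that $E_i\subseteq K\setminus V_\tau$ for $i\notin\tau$, are exactly the details the paper leaves implicit.
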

\begin{proof}
Otherwise, five disjoint traces in $V_\tau$, together with the $s-4$ matching edges outside the four selected edges, would give $s+1$ pairwise disjoint members of the mixed trace family $\F_0$, contradicting
Lemma~\ref{lem:trace-matching-general}.
\end{proof}

\paragraph{Proof strategy.} To prove Theorem \ref{thm:main},  it suffices to establish the local comparison~\eqref{eq:local-board-general} on each \(19\)-board at the two critical values of $n$ by Theorem~\ref{thm:finite-board-criterion}. 
For the family \(\mathcal A_4(n,s)\), every trace on \(K\)
has size four. A general family \(\F\), however, may also have traces
of size two or three. By Lemma~\ref{fact:closure-general}, each such
smaller trace has many completions outside \(K\), and hence contributes
additional weight. These smaller traces cannot occur freely: if too
many \(4\)-subsets of the same board were also present, they would
produce five pairwise disjoint traces, contradicting
Lemma~\ref{lem:local-matching-r4}. Thus the presence of smaller traces
forces some \(4\)-subsets of the board to be absent. Our goal is to
prove the compensation inequality
\begin{equation}\label{EQ:main-inequality-4-uniform}
  \sum_{\substack{H\subseteq V_\tau, \ H\in \F_0,\ \\ |H|=2,3}} w(H)
 \le  \sum_{\substack{H_Q\subseteq V_\tau,\ H_Q\notin \F_0\\ |H_Q|=4}} w(H_Q).   
\end{equation}
We organize the proof according to the number of selected matching edges met
by a missing trace of size 4. The part meeting the largest number of matching edges is treated
on the full \(19\)-board (Layer $1$). The remaining parts are handled on the
subboards formed by \(R_{\F}\) together with three or two selected
matching edges, which have \(15\) (Layer $2$) and \(11\) vertices (Layer $3$), respectively. 
The next three sections establish these local estimates. Their sum gives the displayed compensation inequality, and the weighted double-counting identity then yields
\[
|\F|\le |\mathcal A_4(n,s)|.
\]

\section{Layer $1$: the $19$-board inequality for wide triples and wide pairs}\label{sec:layer1}
Fix $\tau$ with $|\tau|=4$.  Without loss of generality, we may assume that $\tau=\{1,2,3,4\}$. Then  $V_\tau=R_{\F}\cup E_1\cup E_2\cup E_3\cup E_4$.
We refer to \(E_k\) as the \(k\)-th selected column, regard \(e_{k,j}\) as the vertex at level \(j\) of \(E_k\), and call \(k\) the \emph{support} of \(E_k\).
For a trace $H\in \F_0$, the \emph{support} of $H$ is 
\[
 \mathrm{supp}(H)\coloneqq
\{k\in[4]:H\cap {E_{k}}\neq\emptyset\}, 
\]
We call $H$ is \emph{quad} if $|H|=4$, \emph{triple} if $|H|=3$, and  \emph{pair} if $|H|=2$. 
A  trace $H\in \F_0$ is called \emph{wide} if it contains at most one vertex from each selected column and contains no vertex of \(R_{\F}\). 
For a subset $H$, if $H\in \F_0$, we call it \emph{present}; otherwise, we call it \emph{missing}. Let $Q_4\subseteq[4]^4$ be the set of missing wide quads of $\F_0$.  
Stability says that the present wide quads form a down-set: replacing a selected-column level by a smaller level in the same column gives, after sorting, a $4$-set coordinatewise no larger than the original one.  Hence $Q_4$ is an up-set.  

Let $J=\{j_1<j_2\}\in \binom{[4]}{2}$ be a pair support, and a corresponding pair level \(\mathbf{u}=(u_1,u_2)\in [4]^2\). Set 
\[
H_J(\mathbf{u}):=\{e_{j_1,u_1}, e_{j_2,u_2}\}, \qquad 
P_J:=\{\mathbf{u}\in [4]^2: H_J(\mathbf{u})\in \F_0\}.
\]
Let $P_2\coloneqq\{H_J(\mathbf{u}): \mathbf{u}\in P_J, J\in \binom{[4]}{2}\}$,
The corresponding definitions for triples are analogous. In particular, for a triple support $I\in\binom{[4]}3$, let 
\[
       T_I:=\{\mathbf{u}'\in [4]^3: H_I(\mathbf{u}')\in \F_0\}, \qquad  
       T_3\coloneqq \left\{    H_I(\mathbf{u}'): \mathbf{u}'\in T_I, I\in \binom{[4]}{3}  \right\}
\]
Observe that by Lemma \ref{lem:r4-explicit-threshold}, we have 
\[
w(H)\le \frac{12}{25} \text{ if } H\in T_3; \quad \text{ and } w(H)\le \frac{144}{625} \text{ if } H\in P_2. 
\]
The following is the main result of this section. 
\begin{theorem} \label{THM:Lay1-main-result}
On every $19$-board, 
 \begin{equation}\label{eq:layer1-target}
        \frac{12}{25}|T_3|+\frac{144}{625}|P_2|\le |Q_4|.
\end{equation}   
\end{theorem}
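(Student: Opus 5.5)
The plan is to turn \eqref{eq:layer1-target} into a finite statement about down-sets in level grids and then settle that statement by a mix of hand arguments and exact computer checks. Encode everything by levels. A present wide triple on support $I$ is a point $\mathbf u'\in T_I\subseteq[4]^3$, and a present wide pair on support $J$ is a point $\mathbf u\in P_J\subseteq[4]^2$. Stability of $\F_0$ (Lemma~\ref{lem:trace-matching-general}) makes each $T_I$ and $P_J$ a down-set in the coordinatewise order. It also makes the set of present wide quads a down-set of $[4]^4$, so $Q_4$ is its complementary up-set. Three further facts tie these sets together:
\begin{itemize}
\item if a wide pair is present, then so is every wide triple and wide quad containing it;
\item the seeds $R_\F\cup\{e_{k,1}\}$ and $R_\F\cup\{e_{k,2}\}$ are present for every $k$ (Lemma~\ref{lem:seeds-r4});
\item the columns $E_1,\dots,E_4$ are present.
\end{itemize}
The only restriction left is Lemma~\ref{lem:local-matching-r4}: the board contains no five pairwise disjoint present traces.

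The first step is to convert Lemma~\ref{lem:local-matching-r4} into explicit \emph{forcing rules}. Each present small trace forces certain wide quads into $Q_4$. For a present wide triple $H_I(\mathbf u')$, take it together with a seed $R_\F\cup\{e_{k,v}\}$, where $k$ is the column it misses and $v\in\{1,2\}$. These two traces use $7$ vertices, and the remaining $12$ vertices lie in the four columns. If three pairwise disjoint wide quads (or full columns) partitioned those remaining $12$ vertices, we would have five disjoint traces. Hence, for every decomposition of the leftover levels into three wide quads, at least one of the three quads lies in $Q_4$.

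Pairs are handled the same way. A pair leaves $17$ vertices; a seed then uses four of them. A further wide triple or quad, together with the remaining quads, must then be blocked in the same manner. The outcome is a finite hypergraph of ``blocking'' constraints on $[4]^4$, indexed by present points of the $T_I$ and $P_J$. The inequality becomes: for every admissible choice of down-sets $(T_I)$, $(P_J)$ and up-set $Q_4$ satisfying all blocking constraints, $\frac{12}{25}\sum_I|T_I|+\frac{144}{625}\sum_J|P_J|\le|Q_4|$.

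Next I would split according to whether the lowest-level configurations are present. By ``top-star'' I mean a present pair or triple containing a level-$1$ vertex in every column of its support, together with the seeds. In the \emph{top-star} branch, each present small trace forces many quads out, so a fractional argument should suffice. I would relax to a linear program in which the variables are $0/1$ indicators relaxed to $[0,1]$, and the constraints are the down-set inequalities and the blocking inequalities. An exact rational dual (Farkas) certificate then proves that $\frac{12}{25}|T_3|+\frac{144}{625}|P_2|-|Q_4|\le0$ over the relaxation. To keep the LP small, I would first quotient by the symmetric group on the four columns: the forcing rules depend only on levels, so they are symmetric under permuting columns.

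In the \emph{no-top-star} branch the small traces are confined to higher levels. Here the LP relaxation is presumably too weak, so I would instead enumerate exhaustively. For each symmetry class of the families $(T_I,P_J)$, generated as antichains of maximal elements, I would compute the minimal up-set $Q_4$ generated by the forced quads. I would then check the inequality in exact integer arithmetic, pruning whenever adding a present trace only strengthens the forcing.

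The main obstacle I expect is this last branch. The number of down-set configurations across four triple supports and six pair supports is large. The blocking constraints involve decompositions of leftover level-rectangles into three quads, and these must be generated correctly. The delicate part is proving that the search is complete, i.e.\ that every configuration reduces to a checked one by monotonicity. I would handle this by arguing that the left side is monotone in the present small traces while the forced $Q_4$ is monotone as well, so only maximal configurations per ``rectangle extension'' need enumeration.
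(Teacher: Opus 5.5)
There is a genuine gap: the two concrete mechanisms you propose do not work as described.

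\textbf{The branch split is vacuous.} Each $T_I$ and $P_J$ is a down-set, so any present wide triple or pair forces the all-level-$1$ trace on its support to be present. Your ``top-star'' branch is therefore exactly $T_3\cup P_2\neq\emptyset$. Your ``no-top-star'' branch is the trivial case $T_3=P_2=\emptyset$, and small traces can never be ``confined to higher levels''.

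\textbf{The LP relaxation is too weak.} Since the split does nothing, everything rests on the LP. Set all triple indicators to $\tfrac12$, all pair indicators to $\tfrac14$, and all missing-quad indicators to $\tfrac16$. This point satisfies:
\begin{itemize}
\item every monotonicity constraint;
\item every closure constraint ($\tfrac14\le\tfrac12$ and $\tfrac12+\tfrac16\le1$);
\item every five-disjoint-traces constraint built from wide traces, seeds and columns, because such a configuration contains either two small traces or one small trace, a seed and three wide quads;
\item even the caps $|T_I|\le32$ and $|P_J|\le4$.
\end{itemize}
Yet $\tfrac{12}{25}\cdot128+\tfrac{144}{625}\cdot24-\tfrac{256}{6}\approx24.3>0$. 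A modified point with one all-level-$1$ triple forced present still gives about $6.8$. So no Farkas certificate of the kind you describe exists.

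\textbf{The enumeration step is ill-posed.} The blocking rules are disjunctive (one of three quads must be missing), so there is no ``minimal up-set generated by the forced quads''. Minimizing $|Q_4|$ is a hitting-set search. Joint enumeration over four triple supports and six pair supports is hopeless, since one triple support alone admits $26{,}893$ legal down-sets. Monotonicity gives no dominance either, because adding traces increases both sides.

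\textbf{What is missing} are the reductions that make the finite problem small.
\begin{itemize}
\item One first proves by hand that $|P_J|\le4$ and $|T_I|\le32$, using height vectors and the seed-based pair and bad-$3$-matching obstructions. Then the left side is below $67$, and only $|Q_4|\le66$ needs work.
\item One then splits on a property of $Q_4$, not of the traces: whether $Q_4$ contains a full top-star $\{(x_1,x_2,x_3,4):x_i\ge2\}$.
\item If it does, $27$ quads are forced missing. Let $c$ count present top-slice quads and $\ell$ missing lower quads. Adding $\ell\le c+2$, branching on $c$ (and on $\ell$ when $c=23$), and down-set-size cuts makes the LP certifiable.
\item If it does not, a short lemma shows that three disjoint present upper quads kill all wide pairs and triples. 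When no such quads exist, a search gives $|Q_4\cap U|\ge33$.
\item Per-trace thresholds with independent per-support maximization then cover $33\le|Q_4|\le63$. Rectangle forcing for extremal supports covers $64\le|Q_4|\le66$.
\end{itemize}
Without some decomposition of this kind, your outline does not amount to a proof.
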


\subsection{Bounds for pairs and triples}
In this subsection, we prove $|P_2|\le 24$ and $|T_3|\le 128$.

\begin{lemma}\label{lem:pair-ferrers}
For every pair support \(J\), the set of present wide pair levels $P_J$ is a down-set in \([4]^2\) of size at most \(4\). Consequently
\[
        |P_2|\le 24.
\]
\end{lemma}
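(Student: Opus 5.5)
Since stability of $\F_0$ is given by Lemma~\ref{lem:trace-matching-general}, the down-set property of $P_J$ comes almost for free. If $(u_1,u_2)\in P_J$ and $u_1'\le u_1$, $u_2'\le u_2$, then within columns the levels are ordered, so $e_{j_1,u_1'}\le e_{j_1,u_1}$ and $e_{j_2,u_2'}\le e_{j_2,u_2}$. The sorted $2$-set $\{e_{j_1,u_1'},e_{j_2,u_2'}\}$ is then coordinatewise no larger than $\{e_{j_1,u_1},e_{j_2,u_2}\}$, because decreasing entries of a multiset and re-sorting never increases any order statistic. Stability of $\F_0$ therefore gives $H_J(u_1',u_2')\in\F_0$, and $P_J$ is a down-set in $[4]^2$.

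For the size bound I will use Lemma~\ref{lem:local-matching-r4}: it is enough to show that a down-set of size at least $5$ in $[4]^2$ yields five pairwise disjoint traces inside $V_\tau$. A down-set in $[4]^2$ of size $\ge5$ contains one of the following:
\begin{itemize}
\item an antichain-like "staircase" allowing a matching of two disjoint pairs within columns $j_1,j_2$;
\item a row or column of length $\ge2$.
\end{itemize}
More precisely, the Ferrers diagrams of size $\ge5$ all contain either $\{(1,1),(1,2),(2,1),(2,2)\}$ together with another cell, or a full line of length $\ge 4$, or cells $(1,1),(2,1),(3,1),(1,2),(2,2)$, and so on. The cleanest route is to extract two disjoint present pairs on $J$: cells $\mathbf u,\mathbf v$ with $u_1\ne v_1$ and $u_2\ne v_2$. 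Such cells exist as soon as the Ferrers diagram contains $(1,2)$ and $(2,1)$, which holds unless the diagram is a single row or column. However, two disjoint pairs alone are not enough. They use $4$ vertices from two columns, leaving the other two levels of $E_{j_1}$ and $E_{j_2}$, the two other columns $E_k,E_l$ (for $\{k,l\}=[4]\setminus J$), and $R_\F$. That leaves $11$ vertices, and I need three more disjoint traces. Lemma~\ref{lem:seeds-r4} supplies seeds, but every seed contains $R_\F$, so seeds give only one trace.

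The plan is therefore to exploit the pair more strongly. Each present pair $\{e_{j_1,u_1},e_{j_2,u_2}\}$ can be completed, by Lemma~\ref{fact:closure-general}, with any two vertices of $[n]$. Since $q\ge1$ gives only one outside vertex, I will complete inside $V_\tau$ instead. Using $\{e_{j_1,u},e_{j_2,v}\}\cup X$ for leftover board vertices $X$, together with one seed and the pairs, I will aim to assemble five disjoint traces that use at most $19$ vertices. A count shows this is feasible. Two pairs ($4$ vertices), a seed ($4$), and the remaining columns $E_k,E_l$ as quads (present, since $E_k\in\F$) use $16$ vertices. A fifth trace then requires one more disjoint present trace among the remaining $3$ leftover column vertices, and a $3$-vertex trace is not available for free.

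So instead I will use three disjoint pairs on $J$. A down-set of size $\ge5$ in $[4]^2$ always contains a "partial permutation" of size $3$, namely cells with distinct rows and distinct columns: if the conjugate-shape argument gives $\lambda_1\ge\lambda_2\ge\dots$ with $\sum\lambda_i\ge5$ and $\lambda_i\le 4$, then a transversal of size $3$ exists unless $\lambda_3=0$ or $\lambda_1\le2$. The exceptions of size $\ge5$ are the shapes $(4,1)$, $(4,2)$, $(3,2)$, $(4,3)$, $(4,4)$, $(3,3)$ and their conjugates, which I will treat separately. In the main case, three disjoint pairs use $6$ vertices. Together with $E_k$, $E_l$ (using $8$ vertices) and a seed $R_\F\cup\{e_{j_1,a}\}$, where $e_{j_1,a}$ is a level-1 or level-2 vertex of $E_{j_1}$ not used by the pairs, this gives five disjoint traces, contradicting Lemma~\ref{lem:local-matching-r4}. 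Getting a seed vertex free at level $1$ or $2$ in column $j_1$ or $j_2$ requires choosing the transversal to avoid one of those cells, and this will be the main obstacle. In the exceptional two-row shapes (size $\ge5$ forces a row of length $\ge3$), I would instead pair the two rows with a seed taken from the other column of $J$ and close the argument in the same way. The bound $|P_2|\le 6\cdot4=24$ then follows by summing over the $\binom42$ supports.
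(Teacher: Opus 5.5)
Your down-set argument is correct and is essentially the paper's: you invoke stability of $\F_0$, while the paper applies stability of $\F$ with the outside completion kept fixed.

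The size bound, however, has a genuine gap, and it starts from a miscount. You take two disjoint present pairs $H_J(\mathbf u),H_J(\mathbf v)$, a seed, and the full columns $E_k,E_l$ with $\{k,l\}=[4]\setminus J$, and conclude that a fifth trace is still needed. But pair, pair, seed, $E_k$, $E_l$ are already five pairwise disjoint members of $\F_0$ inside $V_\tau$, using $16$ of the $19$ vertices. So Lemma~\ref{lem:local-matching-r4} is violated at once.

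The route you discarded is exactly the paper's key claim. Suppose $\mathbf x,\mathbf y\in P_J$ satisfy $x_1\ne y_1$, $x_2\ne y_2$, and $\{x_r,y_r\}\ne\{1,2\}$ for some $r$. Pick $z\in\{1,2\}\setminus\{x_r,y_r\}$ and use the seed $R_{\F}\cup\{e_{j_r,z}\}$ from Lemma~\ref{lem:seeds-r4}. The side condition is essential: $\{1,2\}^2$ is a legitimate $P_J$ of size $4$, because both low levels are used in both columns and no seed is free.

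With this claim, write $P_J=\{(x,y):y\le h_x\}$ and split on the height vector:
\begin{itemize}
\item if $h_1\ge3$ and $h_2\ge1$, then $(1,3),(2,1)\in P_J$ contradict the claim, since level $2$ is free in coordinate $2$;
\item if $h_1=2$ and $h_3\ge1$, then $(1,2),(3,1)\in P_J$ contradict it, since level $2$ is free in coordinate $1$;
\item in every remaining case $|P_J|\le 4$ directly.
\end{itemize}

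The detour you take instead does not close the argument. Your criterion for a transversal of size $3$ is wrong. A Ferrers shape $\lambda$ admits three cells in distinct rows and columns iff $\lambda_1\ge3$, $\lambda_2\ge2$, and $\lambda_3\ge1$. Hence the hooks $(3,1,1)$, $(4,1,1)$, $(3,1,1,1)$, $(4,1,1,1)$ have size at least $5$ and no $3$-transversal; an example is $P_J=\{(1,1),(1,2),(1,3),(2,1),(3,1)\}$. These shapes satisfy neither of your exceptions and are missing from your list.

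For the listed exceptional shapes you only say you would pair the two rows with a seed and close the argument in the same way. By the count you gave earlier, this produces too few traces, so as written that case is unsupported. It works only after the miscount is corrected and the seed level is chosen as in the claim above.

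Finally, in your main case the seed is superfluous. Three disjoint pairs together with $E_k$ and $E_l$ are already five disjoint traces, so the ``main obstacle'' you identify is not an obstacle. The problem is that this case does not cover all down-sets of size at least $5$.
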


\begin{proof}
Fix a pair support \(J=\{j_1,j_2\}\). 
First we prove that \(P_J\) is a down-set. Suppose \(\mathbf{x}=(x_1,x_2)\in P_J\) and \(\mathbf{y}=(y_1,y_2)\) satisfies \(1\le y_1\le x_1\) and \(1\le y_2\le x_2\). 
Since \(\mathbf{x}\in P_J\), there is an edge \(F\in \F\) whose exact trace on \(K\) is \(H_J(\mathbf{x})\).
Thus $F=H_J(\mathbf{x})\cup X$ for some \(X\subseteq [n]\setminus K\). Put
\[
        F':=H_J(\mathbf{y})\cup X.
\]
Since \(\F\) is stable, \(F'\in \F\). Moreover \(F'\cap K=H_J(\mathbf{y})\), and hence \(H_J(\mathbf{y})\in \F_0\). Therefore \(\mathbf{y}\in P_J\). Thus,  \(P_J\) is  a down-set.
We next record the local obstruction that will be used below. 
\begin{claim}\label{pair-existence}
    There do not exist two levels $\mathbf{x}=(x_1,x_2)$, $\mathbf{y}=(y_1,y_2)$ in \(P_J\) such that $x_1\neq y_1$, $x_2\neq y_2$ and $\{x_r, y_r\}\ne \{1, 2\}$ for some \(r\in\{1,2\}\).
\end{claim}
\begin{proof}
    If such \(\mathbf{x}, \mathbf{y}\) existed, then there would be a level $z\in \{1,2\}\setminus \{x_r, y_r\}$.
By Lemma~\ref{lem:seeds-r4}, the seed $R_{\F}\cup \{e_{j,z}\}$ belongs to \(\F_0\) for $j\in J$. Since \(z\notin \{x_r, y_r\}\), this seed is disjoint from both \(H_J(\mathbf{x})\) and \(H_J(\mathbf{y})\). 
Also \(H_J(\mathbf{x})\) and \(H_J(\mathbf{y})\) are disjoint from each other, because their two coordinates are different.
Hence, the two columns outside $J$, together with $R_{\F}\cup \{e_{j,z}\}$, $H_J(\mathbf{x})$, and $H_J(\mathbf{y})$, form five pairwise disjoint members of $\F_0$ within the same $19$-board.
This contradicts Lemma~\ref{lem:local-matching-r4}. 
\end{proof}
It remains to prove  \(|P_J|\le 4\). Since \(P_J\) is a   down-set in \([4]^2\), it is represented by a height vector $4\ge h_1\ge h_2\ge h_3\ge h_4\ge 0$ such that
\[
        P_J=\{(x,y):1\le x\le 4,\ 1\le y\le h_x\}.
\]
Thus $|P_J|=h_1+h_2+h_3+h_4$. We distinguish three cases.

First suppose \(h_1\ge 3\). Then \((1,3)\in P_J\). If \(h_2\ge 1\),
then \((2,1)\in P_J\). The two levels \((1,3)\) and \((2,1)\) contradict Claim~\ref{pair-existence}.
By monotonicity of the height vector, also \(h_3=h_4=0\), and consequently
\[
        |P_J|=h_1\le 4.
\]
Next suppose \(h_1=2\). Then \((1,2)\in P_J\). If \(h_3\ge 1\), then \((3,1)\in P_J\) contradict Claim~\ref{pair-existence}.
Hence \(h_3=h_4=0\), and since \(h_2\le h_1=2\), we get
\[
        |P_J|=h_1+h_2\le 2+2=4.
\]
Finally suppose \(h_1\le 1\). Since $h_1\ge h_2\ge h_3\ge h_4$, all four heights are at most \(1\). Hence
\[
        |P_J|=h_1+h_2+h_3+h_4\le 4.
\]
In all cases, \(|P_J|\le 4\). Since the support \(J\) was arbitrary and there are exactly \(\binom{4}{2}=6\) pair supports, we obtain
\begin{align*}
    |P_2|=\sum_J |P_J|\le 6\cdot 4=24.
\end{align*}
This completes the proof of Lemma \ref{lem:pair-ferrers}. 
\end{proof}

A $3$-matching $\mathbf{a}=(a_1, a_2, a_3), \mathbf{b}=(b_1,b_2,b_3), \mathbf{c}=(c_1,c_2,c_3)\in[4]^3$ is called \emph{bad} if, in every coordinate, the three levels $a_j,b_j,c_j$ are pairwise distinct, and in at least one coordinate, the unused level in $[4]\setminus\{a_j,b_j,c_j\}$ is $1$ or $2$.

\begin{lemma}\label{lem:triple-ferrers}
For a  triple support $I\in\binom{[4]}3$, we have $T_I$ is a down-set, does not contain bad \(3\)-matching and $|T_I|\le 32$. Consequently, $|T_3|\le 128$.
\end{lemma}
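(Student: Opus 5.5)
The proof follows Lemma~\ref{lem:pair-ferrers} step for step: first the down-set property, then a matching-type obstruction coming from the seeds, and finally a counting bound on down-sets that avoid this obstruction.

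\textbf{Step 1: $T_I$ is a down-set.} Fix $I=\{i_1<i_2<i_3\}$, and let $\mathbf{x}\in T_I$ and $\mathbf{y}\le\mathbf{x}$ coordinatewise. Choose $F=H_I(\mathbf{x})\cup\{u\}\in\F$ with $u\notin K$. Then $H_I(\mathbf{y})\cup\{u\}\preceq F$, so $H_I(\mathbf{y})\cup\{u\}\in\F$ by stability. Its trace on $K$ is exactly $H_I(\mathbf{y})$, hence $\mathbf{y}\in T_I$.

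\textbf{Step 2: no bad $3$-matching.} Suppose $\mathbf{a},\mathbf{b},\mathbf{c}\in T_I$ form a bad $3$-matching. Then the traces $H_I(\mathbf{a}),H_I(\mathbf{b}),H_I(\mathbf{c})$ are pairwise disjoint, since in each coordinate the three levels are distinct. Let $j$ be a coordinate whose unused level $z$ lies in $\{1,2\}$. By Lemma~\ref{lem:seeds-r4}, the seed $R_\F\cup\{e_{i_j,z}\}$ belongs to $\F_0$, and it is disjoint from all three traces. Let $E_k$ be the one selected column outside $I$. The five sets consisting of the three traces, the seed and $E_k$ are then pairwise disjoint members of $\F_0$ on the $19$-board. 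This contradicts Lemma~\ref{lem:local-matching-r4}.

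\textbf{Step 3: $|T_I|\le32$.} This is the combinatorial core. A natural attempt is to project along the last coordinate. For each $\ell\in[4]$, write $S_\ell=\{(x_1,x_2):(x_1,x_2,\ell)\in T_I\}$. These slices are nested down-sets in $[4]^2$, with $S_1\supseteq S_2\supseteq S_3\supseteq S_4$, and $|T_I|=\sum_\ell|S_\ell|$.

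The bad-matching condition gives the following constraint. Take three distinct levels $\ell_1,\ell_2,\ell_3$ whose unused level is $1$ or $2$, so $\{\ell_1,\ell_2,\ell_3\}$ is $\{2,3,4\}$ or $\{1,3,4\}$. Then one cannot pick $\mathbf{p}_m\in S_{\ell_m}$, $m=1,2,3$, whose first coordinates are pairwise distinct and whose second coordinates are pairwise distinct. Further bad configurations arise when the distinguished coordinate is the first or the second.

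\begin{itemize}
\item If $S_3=\emptyset$, then $|T_I|\le|S_1|+|S_2|\le 32$ immediately.
\item If $S_3\ne\emptyset$, then $(1,1)\in S_3\subseteq S_2\subseteq S_1$. The obstruction should then force $S_2$ and $S_1$ to avoid any two points with both coordinates different from $1$ and from each other. This is a Ferrers-shape restriction of the same kind as Claim~\ref{pair-existence}. It should cut the sum down to at most $32$. One candidate extremal shape is all $\mathbf{x}$ with $x_1\le2$, which has exactly $2\cdot16=32$ elements and contains no $3$-matching at all.
\end{itemize}

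If this case analysis becomes unwieldy, the statement is in any case a finite one. One can enumerate all down-sets of $[4]^3$, of which there are $232{,}848$ (the number of plane partitions fitting in a $4\times4\times4$ box). Discarding those containing a bad $3$-matching, one checks that the maximum size is $32$. This matches the paper's computer-assisted style, and I expect this verification to be the main obstacle if a clean hand argument is not found.

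Finally, summing over the four triple supports gives $|T_3|\le4\cdot32=128$.
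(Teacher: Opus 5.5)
Your Steps 1 and 2 are correct and match the paper. The gap is Step 3, the only substantive part of the lemma, and you do not actually prove it. In the case $S_3\neq\emptyset$ the restriction you propose is false. A bad $3$-matching needs a coordinate whose \emph{unused} level is $1$ or $2$. For $(1,1,3)$, $\mathbf p\in S_2$, $\mathbf q\in S_1$, the third coordinate has unused level $4$, so a bad matching arises only when $\{p_r,q_r\}=\{3,4\}$ for $r=1$ or $r=2$.

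For instance, take the down-closure of $\{(1,1,3),(2,3,2),(3,2,1)\}$. Its only $3$-matching consists of these three points, with unused level $4$ in every coordinate, so it is a legal $T_I$. Yet $(2,3)\in S_2$ and $(3,2)\in S_1$. Even a correct restriction of this type would still have to be turned into $\sum_\ell|S_\ell|\le 32$, and you leave that open. The enumeration fallback is legitimate in principle: $232{,}848$ is the right count of down-sets, and the answer would be $32$, attained by $\{x_1\le 2\}$. But you only predict its outcome, so the bound is asserted rather than proved.

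The idea you are missing is the paper's split of the levels into $S=\{1,2\}$ and $L=\{3,4\}$. If $(3,3,3)\in T_I$, no point of $T_I$ has a coordinate equal to $4$. Otherwise lowering gives, up to permuting coordinates, $(1,1,4)$, which together with $(2,2,1)$ and $(3,3,3)$ is a bad matching; hence $|T_I|\le 27$. If instead $T_I\cap L^3=\emptyset$, one shows for each coordinate $i$ the following. The number of points of $T_I$ with $x_i\in S$ and the other two coordinates in $L$ is at most the number of points \emph{absent} from $T_I$ with $x_i\in L$ and the other two coordinates in $S$. If there are between one and four such present points, the minimal one (e.g.\ $(3,3,1)$ for $i=3$) is present. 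It forms a bad matching with each of four disjoint pairs in $S\times S\times L$, such as $\{(1,1,3),(2,2,4)\}$. If there are at least five, a short case analysis shows that all eight points of $S\times S\times L$ are absent. Summing over $i$ gives $|T_I|\le |S^3|+3\cdot 8=32$.
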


\begin{proof}

Using the same argument as in the proof of Lemma~\ref{lem:pair-ferrers}, we have $T_I$ is a down-set; we omit the details.

We first show that $T_I$ contains no bad $3$-matching. If three present
wide triple levels formed one, then in some supporting column \(k\) they
would omit level $t\in \{1,2\}$. By Lemma~\ref{lem:seeds-r4}, the corresponding
seed $R_{\F}\cup\{e_{k,t}\}$ is present. This seed, the three triples, and
the fourth full column would be five pairwise disjoint traces of $\F_0$, contradicting
Lemma~\ref{lem:local-matching-r4}.

Next we prove $|T_I|\le 32$. Put
\[
        S:=\{1,2\},
        \qquad
        L:=\{3,4\}.
\]
Suppose that \(T_I\cap L^3\neq\emptyset\). Since  \(T_I\) is a down-set,  $(3,3,3)\in T_I$. We claim that $T_I\subseteq \{1,2,3\}^3$ and so 
\[
|T_I|\le 27<32.
\]
Indeed, if some point of \(T_I\) has a coordinate equal to \(4\), then after
permuting coordinates and lowering the other two coordinates, the down-set
property gives \((1,1,4)\in T_I\). Since \((3,3,3)\in T_I\), it also gives
\((2,2,1)\in T_I\).  But $(1,1,4), (2,2,1), (3,3,3)$ form a bad \(3\)-matching, which is impossible.

In the following, assume that $T_I\cap L^3=\emptyset$. For \(i\in\{1,2,3\}\), let
\[
        T_{L^2}^i:=\{\mathbf{x}\in[4]^3:x_i\in S,\ x_j\in L\text{ for }j\neq i\},
\]
\[
       T_{S^2}^i:=\{\mathbf{x}\in[4]^3:x_i\in L,\ x_j\in S\text{ for }j\neq i\},
\]
Then the sets
\[
        S^3, \quad  T_{L^2}^1,  T_{L^2}^2,  T_{L^2}^3, \quad T_{S^2}^1, T_{S^2}^2, T_{S^2}^3
\]
are pairwise disjoint and contain all possible points of \(T_I\). 
\begin{claim}\label{claim:relationship-related-to-T}
    For each \(i\), $|T_I\cap T_{L^2}^i| \le |T_{S^2}^i\setminus T_I|$.
\end{claim}
\begin{proof}
 By symmetry it suffices to prove this for \(i=3\). Write $T_{L^2}^3=L\times L\times S$ after permuting coordinates if necessary. 
 For notational convenience in the verification below, we prove the equivalent coordinate form $|T_I\cap (L\times L\times S)| \le|(S\times S\times L)\setminus T_I|$.

If \(T_I\cap (L\times L\times S)=\emptyset\), there is nothing to prove. Suppose first that $1\le |T_I\cap (L\times L\times S)|\le 4$.
The down-set property gives $(3,3,1)\in T_I$.
Consider the following four disjoint pairs in \(S\times S\times L\):
\begin{align*}
      \{(1,1,3),(2,2,4)\};~~ \{(1,1,4),(2,2,3)\};~~\{(1,2,3),(2,1,4)\};~~\{(1,2,4),(2,1,3)\}.
\end{align*}
Each of the above pairs forms a bad $3$-matching with \((3,3,1)\). 
Hence every pair cannot be entirely contained in \(T_I\), which implies that $|(S\times S\times L)\setminus T_I|\ge 4\ge |T_I\cap (L\times L\times S)|$.

In the following, suppose that $|T_I\cap (L\times L\times S)|\ge 5$.
Note that \(T_I\cap (L\times L\times S)\) is a down-set inside \(L\times L\times S\). If \(T_I\cap (L\times L\times S)\) contains \((3,4,2)\), then every point \(q\in S\times S\times L\) must be absent from \(T_I\). 
Indeed, if such a \(q\) were present, then the down-set property would give
\[
        (1,1,3)\in T_I,
        \qquad
        (2,3,1)\in T_I,
\]
and the three points  $(3,4,2)$, $(1,1,3)$ and $(2,3,1)$ would form a bad \(3\)-matching. 
Thus \(S\times S\times L\subseteq  (S\times S\times L)\setminus T_I\), and so \(|(S\times S\times L)\setminus T_I|=8\ge |T_I\cap (L\times L\times S)|\). 
The same argument, with the first two coordinates interchanged, applies if \(T_I\cap (L\times L\times S)\) contains \((4,3,2)\). If \(T_I\cap (L\times L\times S)\) contains \((4,4,2)\), then it also contains \((3,4,2)\), so this case is already covered.

We may therefore assume that none of $(3,4,2), (4,3,2), (4,4,2)$ lies in \(T_I\cap (L\times L\times S)\). 
Since \(|T_I\cap (L\times L\times S)|\ge 5\), the only possible point of \(T_I\cap (L\times L\times S)\) in \(L\times L\times\{2\}\) is \((3,3,2)\), and the whole bottom layer \(L\times L\times\{1\}\) must be contained in \(T_I\cap (L\times L\times S)\). 
In particular,
\[
        (3,3,2)\in T_I,
        \qquad
        (3,4,1)\in T_I.
\]
Now let \(q\in S\times S\times L\). 
If \(q\in T_I\), then \((1,1,3)\in T_I\). Also, from \((3,3,2)\in T_I\), we get \((2,3,2)\in T_I\). Hence
\[
        (1,1,3),\quad (2,3,2),\quad  (3,4,1)
\]
form a bad \(3\)-matching. Therefore again $S\times S\times L\subseteq (S\times S\times L)\setminus T_I$, so $|(S\times S\times L)\setminus T_I|=8\ge |T_I\cap (L\times L\times S|$.
This completes the proof. 
\end{proof}
By  \(T_I\cap L^3=\emptyset\) and Claim \ref{claim:relationship-related-to-T},  We conclude that 
\begin{align*}
        |T_I|&=|T_I\cap S^3|+\sum_{i=1}^3 |T_I\cap T_{L^2}^i|+\sum_{i=1}^3 |T_I\cap  T_{S^2}^i| \\
        &\le|T_I\cap S^3| +\sum_{i=1}^3 |T_{S^2}^i\setminus T_I| +\sum_{i=1}^3 |T_I\cap T_{S^2}^i| \\
        &=|T_I\cap S^3|+\sum_{i=1}^3 |T_{S^2}^i|\\
        &\le|S^3|+3\cdot 8=32.
\end{align*}

There are four triple supports in a \(19\)-board. Applying the bound
\(|T_I|\le 32\) to each support $I$ gives
\[
        |T_3|\le 4\cdot 32=128.
\]
\end{proof}

It follows from Lemmas \ref{lem:pair-ferrers} and \ref{lem:triple-ferrers} that 
\begin{equation}\label{eq:T3-P2-M}
      \frac{12}{25}|T_3|+\frac{144}{625}|P_2|
        \le \frac{12}{25}\cdot128+\frac{144}{625}\cdot24
        =66.9696<67.
\end{equation}

Hence \eqref{eq:layer1-target} holds whenever $|Q_4|\ge67$. It remains to consider the case in which \(|Q_4|\le 66\). We divide this case into two branches: the top-star branch and the no-top-star branch.

\subsection{The top-star branch: an exact Farkas certificate}

A \emph{full top-star in direction \(4\)} is $S_4^+=\{(x_1,x_2,x_3,4):x_i\in\{2,3,4\}\}$.

\begin{lemma}\label{lem:topstar}
Assume that the missing wide quads set $Q_4$ contains a full top-star.  
After permuting the four selected columns, assume that $S^+_4\subseteq Q_4$.
If \(|Q_4|\le 66\), then 
 \begin{equation*}
        \frac{12}{25}|T_3|+\frac{144}{625}|P_2|\le |Q_4|.
\end{equation*} 
\end{lemma}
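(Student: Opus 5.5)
The plan is to turn the statement into a finite linear feasibility problem and certify infeasibility with an exact rational Farkas certificate, as the section title announces. The unknowns are the $0/1$ indicators of the wide quads ($4^4=256$ levels, with $Q_4$ the missing ones), the wide triples ($4\cdot 64$ levels, giving $T_3$) and the wide pairs ($6\cdot16$ levels, giving $P_2$). We relax them to real variables in $[0,1]$.

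\textbf{Constraints.} On these variables we impose the following linear constraints, all of which are valid for every $19$-board.
\begin{enumerate}
\item Down-set constraints. $P_J$ and $T_I$ are down-sets (Lemmas~\ref{lem:pair-ferrers} and \ref{lem:triple-ferrers}), and the present wide quads form a down-set, so $Q_4$ is an up-set. Each of these becomes $x_{\mathbf y}\ge x_{\mathbf x}$ for covering pairs $\mathbf y\le\mathbf x$.
\item Five-disjoint-traces exclusions (Lemma~\ref{lem:local-matching-r4}), together with the seeds of Lemma~\ref{lem:seeds-r4}. For every configuration of wide traces and seeds that would give five pairwise disjoint members of $\F_0$ in $V_\tau$, the sum of the indicators of its non-seed members is at most (number of non-seed members) $-1$. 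These include the bad $3$-matchings and the obstruction of Claim~\ref{pair-existence}. They also include the mixed ones: two disjoint present wide quads together with a triple and a pair and a seed, a quad with a triple matching, and so on.
\item Top-star and cap. The top-star hypothesis gives $x_{\mathbf u}=0$ (missing) for all $\mathbf u\in S_4^+$ and its $3^3=27$ levels. The cap is $|Q_4|\le 66$.
\item Bounds already proved. We add $|T_I|\le 32$ and $|P_J|\le 4$.
\end{enumerate}

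\textbf{Target.} We want to show that the system together with
\[
\tfrac{12}{25}|T_3|+\tfrac{144}{625}|P_2|-|Q_4|\ge \varepsilon
\]
is infeasible. Equivalently, we exhibit nonnegative rational multipliers whose combination of the constraints yields $\tfrac{12}{25}|T_3|+\tfrac{144}{625}|P_2|-|Q_4|\le 0$ directly.

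\textbf{Why the top star helps.} The top-star hypothesis is what makes the LP tight enough. Since $(2,2,2,4),\dots$ are missing, every present quad with level $4$ in column $4$ must have some coordinate equal to $1$. By the up-set property, the present region is then squeezed. This forces many present quads of low level, which lie in disjoint pairs. Those disjoint present quads, combined with triples, pairs and seeds, forbid many triples and pairs. Heuristically, each present triple or pair should be charged to a missing quad that it ``forces''. The Farkas multipliers are exactly a fractional version of this charging scheme.

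\textbf{Steps.}
\begin{enumerate}
\item Enumerate all obstruction constraints by symmetry. We use the permutations of columns $1,2,3$ fixing direction $4$, which reduces the count.
\item Solve the LP in exact rational arithmetic, maximizing $\tfrac{12}{25}|T_3|+\tfrac{144}{625}|P_2|-|Q_4|$.
\item Extract the dual solution and verify it independently. Each multiplier must be nonnegative, and the weighted sum of constraint rows must dominate the objective row coordinatewise with right-hand side $\le 0$.
\end{enumerate}

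\textbf{Main obstacle.} The main obstacle is that the plain LP relaxation may have a positive optimum, since fractional solutions can evade the disjointness constraints. If that happens, I would first strengthen the system by adding all five-trace exclusions involving up to two quads, not only the triple/pair ones. If that still fails, I would branch on a few small integer parameters and give a separate certificate for each branch. Candidate parameters are the level of $(1,1,1,4)$-type quads, or whether $(4,4,4,3)$ is missing. Any remaining integrality gap would then be closed by a short exhaustive integer search over the branch.
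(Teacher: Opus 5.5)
\textbf{Gap 1: the closure rows are missing.} Your overall strategy is the paper's: relax to a finite LP in $0/1$ indicators, certify $300|T_3|+144|P_2|\le 625|Q_4|$ with exact rational Farkas multipliers, and branch where the plain relaxation is too weak. But the system you propose to certify lacks the constraint family on which the top-star case actually turns. You never use Lemma~\ref{fact:closure-general}: if a wide pair or triple $H$ is present, every wide quad containing $H$ is present. In LP form this reads $z_H\le y_x$ for a top extension $(x,4)$ and $z_H+m_{x,t}\le 1$ for a lower extension $(x,t)$; this is the paper's row type (R5).

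Nothing on your list implies these rows. A trace and a quad containing it never lie in a common family of pairwise disjoint traces, so no five-trace row links $z_H$ to an extension of $H$. Your extra ``mixed'' exclusions are valid but do not help here. For example, $Q_4=\{x:x_4=4\}$ with $(1,1,1)$ the only present trace satisfies every row you list (including the cap and the top star), yet it violates closure.

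Closure is exactly where the hypothesis $S_4^+\subseteq Q_4$ bites. Write $C^{\rm top}=\{x\in[4]^3:(x,4)\notin Q_4\}$. Since $(x,4)\in Q_4$ for all $x\in\{2,3,4\}^3$, closure forces every present triple on $\{1,2,3\}$ into $C^{\rm top}$, so it has a coordinate equal to $1$. It also kills outright every triple on $\{i,j,4\}$ with level $4$ in column $4$ and levels $\ge2$ in columns $i,j$, and every pair having an extension in $S_4^+$. Without these hard zero-forcing rows, traces are excluded only through hitting-type rows $z_H\le\sum m_Q$. A fractional point satisfies those cheaply by spreading missing mass, so there is no reason to expect your LP to close. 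The paper's certificates are for a system containing (R5), so they do not certify yours. Your heuristic misplaces the mechanism for the same reason: the top star only produces missing quads, and closure is what turns them into forbidden traces.

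\textbf{Gap 2: the branching is not specified.} Even the correct relaxation needs a particular tightening. The paper does not certify one LP. It fixes $c=|C^{\rm top}|\in\{0,\dots,37\}$, and for $c=23$ it also fixes $\ell=|Q_4\cap([4]^3\times\{1,2,3\})|\le c+2$. In each branch it adds cuts that are valid only once these cardinalities are fixed. Let $Z$ be the set of $x\in[4]^3$ with a coordinate equal to $1$. The cuts are $y_x\le 0$ if $|\{z\in Z:z\preceq x\}|>c$, $y_x\ge1$ if $|\{z\in Z:z\succeq x\}|>37-c$, and the analogous rows for the lower missing up-set of size $\ell$ (the paper's (R10)). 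This yields $63$ certified branches.

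Your fallback, branching on unspecified parameters and then running an ``exhaustive integer search'', is not an argument until it is carried out. Carried out without closure rows, it would test a strictly weaker system for which no certificate is known. Adding the closure rows and branching on $c$ (and $\ell$) with these cardinality cuts turns your plan into the paper's proof.
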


\begin{proof}
Let
\[
        C^{\rm top}:=\{x\in[4]^3:(x,4)\notin Q_4\},~~~~~~ N:=Q_4\cap([4]^3\times\{1,2,3\}).
\]
Write $c:=|C^{\rm top}|$, $\ell:=|N|$. Obviously,  
\begin{equation}\label{EQ:M=64-c+l}
  |Q_4|=64-c+\ell.  
\end{equation}
Moreover, \(S_4^+\subseteq Q_4\) means that $c\le 4^3-3^3=37$.
If, in addition, \(|Q_4|\le66\), then $64-c+\ell=|Q_4|\le66$, so
\[
        \ell\le c+2.
\]
We introduce the labelled variables used in the top-star relaxation.
For \(x\in [4]^3\), let
\begin{align*}
    y_x:=\mathds{1}_{\{x\in C^{top} \}},~ m_{x,t}:=\mathds{1}_{\{(x,t)\in N\}}, 
\end{align*}
that is, \(y_x=1\) means that the wide quad \((x,4)\) is present, and \(m_{x,t}=1\) means that the wide quad \((x,t)\) is missing.
For a triple support \(I=\{i_1<i_2<i_3\}\in\binom{[4]}3\) and a level  $\mathbf{v}=(v_1,v_2,v_3)\in[4]^3$, recall 
\[
        H_I(\mathbf{v}):=\{e_{i_1,v_1},e_{i_2,v_2},e_{i_3,v_3}\},  \quad T_I := \{\mathbf{v}\in[4]^3:H_I(\mathbf{v})\in \F_0\}.
\]
Define 
\[
        t_{I,\mathbf{v}}:=\mathds{1}_{\{\mathbf{v}\in T_I\}}. 
\]

For \(J=\{j_1, j_2\}\in\binom{[4]}2\) and $\mathbf{u}=(u_1,u_2)\in [4]^2$, recall 
$H_J(\mathbf{u})=\{e_{j_1,u_1},e_{j_2,u_2}\}$ and $P_J= \{\mathbf{u}\in[4]^2:H_J(\mathbf{u})\in \F_0\}$. Define 
\[
        p_{J,\mathbf{u}}:=\mathds{1}_{\{\mathbf{u}\in P_J\}}.
\]

Let \(X\) denote the vector consisting of all variables
\[
        y_x,\quad m_{x,t},\quad t_{I,\mathbf{v}},\quad p_{J,\mathbf{u}}.
\]
With supports remembered, we have 
\[
        |T_3|
        =
        \sum_{I\in\binom{[4]}3}
        \sum_{\mathbf{v}\in[4]^3} t_{I,\mathbf{v}},\quad
        |P_2|
        =
        \sum_{J\in\binom{[4]}2}
        \sum_{\mathbf{u}\in[4]^2} p_{J,\mathbf{u}}.
\]

The remainder of the proof is verified by exact rational Farkas certificates given in Appendix~\ref{app:topstar-spec}. Note that, in view of \eqref{EQ:M=64-c+l}, it suffices for the proof of the lemma to establish
\[
300|T_3|+144|P_2| \le 625(64-c+\ell),
\]
which is equivalent to
\[
625c-625\ell+300|T_3|+144|P_2|\le 40000.
\]
Thus, define the linear objective
\[
\begin{aligned}
        d^TX
        &:=
        625\sum_{x\in [4]^3}y_x
        -
        625\sum_{x\in [4]^3}\sum_{t=1}^3m_{x,t}+300\sum_{I\in\binom{[4]}3}
             \sum_{\mathbf{v}\in[4]^3}t_{I,\mathbf{v}}
        +
        144\sum_{J\in\binom{[4]}2}
             \sum_{\mathbf{u}\in[4]^2}p_{J,\mathbf{u}}.
\end{aligned}
\]
The goal is to show $d^TX\le 40000$.

Recall that $c=\sum_{x\in[4]^3}y_x\le 37$. 
We divide the feasible configurations into branches according to the
level of \(c\). For every \(c\neq23\), one branch indexed by \(c\) is
sufficient. The exceptional case \(c=23\) is refined further according
to the level of \(\ell\). In this case \(\ell\le23+2=25\). Thus the
branch set is
\[
\mathcal B
=
\{c\in\mathbb Z:0\le c\le37,\ c\neq23\}
\cup
\{(23,\ell):\ell\in\mathbb Z,\ 0\le\ell\le25\}.
\]

For a branch \(\beta=c\), the condition system \(A_\beta X\le b_\beta\)
consists of the inequalities of  types {\rm (R1)}--{\rm (R10)} listed in Appendix~\ref{app:topstar-spec},
together with the condition
\[
\sum_{x\in[4]^3}y_x=c.
\]
For a branch \(\beta=(23,\ell)\), we impose both
\[
\sum_{x\in[4]^3}y_x=23
\qquad\text{and}\qquad
\sum_{x\in[4]^3}\sum_{t=1}^3m_{x,t}=\ell.
\]
It is enough to verify the desired objective bound separately for every
\(\beta\in\mathcal B\).

The above is the computer-assisted input in this lemma. By proof-critical files of Appendix \ref{app:topstar-spec}.
We get the following conclusion: For every branch \(\beta\) there exist a positive integer \(D_\beta\), nonnegative row multipliers \(\lambda_\beta\), and nonnegative upper-bound multipliers \(\mu_\beta\)
such that
\[
        A_\beta^T\lambda_\beta+\mu_\beta\ge D_\beta d, 
        \tag{F1}
\]
 and
\[
        b_\beta^T\lambda_\beta+\mathbf 1^T\mu_\beta
        \le 40000D_\beta .
        \tag{F2}
\]
We now explain why \((F1)\) and \((F2)\) imply the desired
bound.  Taking the inner product of (F1) with $X$, we obtain
\begin{equation}\label{EQ:mian-inequ-Farkas-certificat}
         D_\beta d^TX \le \bigl(A_\beta^T\lambda_\beta+\mu_\beta\bigr)^TX
        \le
        \lambda_\beta^T A_\beta X+\mu_\beta^TX.   
\end{equation}

Because \(X\) is feasible for the branch relaxation, we have $A_\beta X\le b_\beta$. Moreover, each coordinate of $X$ does not exceed $1$, and \(\mu_\beta\ge0\). We have $\mu_\beta^TX\le \mu_\beta^T\mathbf 1$. This together with \(\lambda_\beta\ge0\) and \eqref{EQ:mian-inequ-Farkas-certificat} gives 
\begin{align*}
        D_\beta d^TX \le \lambda_\beta^T b_\beta+\mu_\beta^T\mathbf 1  =     b_\beta^T\lambda_\beta+\mathbf 1^T\mu_\beta \le  40000D_\beta,
\end{align*}

where the last inequality is (F2).  Since \(D_\beta>0\), we
obtain
\[
        d^TX\le40000, 
\]
as required.
\end{proof}

\subsection{The no-top-star branch: exact up-set searches}
Recall that \( V_\tau = R_{\F} \cup E_1 \cup E_2 \cup E_3 \cup E_4 \), where for each \( k \in [4] \), $E_k = \{\, e_{k,1} < e_{k,2} < e_{k,3} < e_{k,4} \,\}$.
For a trace \( H \in \F_0 \), its support is 
\[
\operatorname{supp}(H) \coloneqq \{\, k \in [4] : H \cap E_k \neq \emptyset \,\}.
\]
We say that \( H \) is \emph{upper} if it contains none of the elements \( e_{k,1} \) for any \( k \in [4] \). 
A matching in \( \F_0 \) is called \emph{upper} if every trace in it is upper. Let $U=\{2,3,4\}^4$ and $Q_4^U=Q_4\cap U$. 

\begin{lemma}\label{lem:upper-matching-branch}
If $U\setminus Q_4^U$ contains three pairwise disjoint upper quads, then
$T_3=P_2=\emptyset$.
\end{lemma}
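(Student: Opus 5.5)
The plan is a direct five-disjoint-traces argument: show that any present wide triple or pair, together with the three given upper quads and a suitable seed, would produce five pairwise disjoint traces of $\F_0$ inside $V_\tau$, contradicting Lemma~\ref{lem:local-matching-r4}.

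\textbf{The three quads fill the upper levels.} Let $A,B,C$ be the three pairwise disjoint upper quads in $U\setminus Q_4^U$. Being outside $Q_4$, they are present wide quads, i.e.\ members of $\F_0$. Each of them uses exactly one vertex of each selected column $E_k$, and that vertex is at a level in $\{2,3,4\}$. Because they are pairwise disjoint, in every column $k$ the three levels they use are distinct, so they are exactly $\{2,3,4\}$. Hence $A\cup B\cup C=V_\tau\setminus\bigl(R_\F\cup\{e_{1,1},e_{2,1},e_{3,1},e_{4,1}\}\bigr)$. It remains to find two further disjoint traces inside the leftover set $R_\F\cup\{e_{k,1}:k\in[4]\}$.

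\textbf{Triples.} Suppose $T_3\neq\emptyset$, say $\mathbf v\in T_I$ for some triple support $I$. By Lemma~\ref{lem:triple-ferrers}, $T_I$ is a down-set, so $(1,1,1)\in T_I$. This means the bottom triple $\{e_{i,1}:i\in I\}$ is present. Let $k$ be the unique column not in $I$. Since $q\ge1$ in the critical range, Lemma~\ref{lem:seeds-r4} shows that the seed $R_\F\cup\{e_{k,1}\}$ lies in $\F_0$. The bottom triple and this seed are disjoint from each other and from $A\cup B\cup C$. Together with $A,B,C$ they give five pairwise disjoint traces of $\F_0$ in $V_\tau$, which is impossible.

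\textbf{Pairs.} Suppose $P_2\neq\emptyset$, say $\mathbf u\in P_J$ with $J=\{j_1,j_2\}$. By Lemma~\ref{lem:pair-ferrers}, $P_J$ is a down-set, so $(1,1)\in P_J$ and $\{e_{j_1,1},e_{j_2,1}\}$ is present. Choose any column $k\notin J$. The seed $R_\F\cup\{e_{k,1}\}$ is present by Lemma~\ref{lem:seeds-r4} and is disjoint from this bottom pair and from $A\cup B\cup C$. This again yields five pairwise disjoint traces, a contradiction. Hence $T_3=P_2=\emptyset$.

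There is no real obstacle here. The one step needing care is the first: in the disjointness count, observe that wideness forces each quad to meet every column exactly once. This is what pins down the union of the three quads and leaves the bottom levels and $R_\F$ free for the lowered trace and the seed.
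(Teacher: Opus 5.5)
Your proof is correct, but it does the lowering on the opposite side from the paper. The paper keeps the given present pair or triple $H$ fixed and puts the seed $R_\F\cup\{e_{j,1}\}$ in a column $j\notin\operatorname{supp}(H)$. It then modifies the three upper quads: in each column $k\in\operatorname{supp}(H)$, the (at most one) quad using the level of $H$ is lowered to level $1$. Stability keeps the modified quads in $\F_0$, and they stay pairwise disjoint and disjoint from $H$ and the seed.

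You instead lower $H$. By the down-set property of $T_I$ and $P_J$ from Lemmas~\ref{lem:pair-ferrers} and~\ref{lem:triple-ferrers}, any present wide triple or pair forces the bottom trace $\{e_{i,1}:i\in I\}$ (resp.\ $\{e_{j_1,1},e_{j_2,1}\}$) to be present. The three given quads occupy exactly levels $2,3,4$ in every column, so they can be used unchanged. The bottom trace and a level-$1$ seed then fit into the leftover vertices $R_\F\cup\{e_{k,1}:k\in[4]\}$.

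Your route is slightly shorter, since nothing has to be constructed or rechecked for disjointness. In exchange it uses monotonicity of the trace families rather than of the present wide quads. The paper's route works directly with the specific $H$, in the same trace-plus-seed-plus-residual-$3$-matching form later used for the seed-hitting constraints in (N5).
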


\begin{proof}
Let $Q^1,Q^2,Q^3\in U\setminus Q_4^U$ be pairwise disjoint upper
quads.  Suppose, for a contradiction, that a wide pair or triple $H$ is
present.  Choose a column $j\notin\operatorname{supp}(H)$.  By
Lemma~\ref{lem:seeds-r4}, the seed
$S:=R_{\F}\cup\{e_{j,1}\}$ is present and disjoint from $H$.

For $k\in\operatorname{supp}(H)$, let $h_k$ be the level used by $H$ in
column $k$.  For every $i\in[3]$ and $k\in[4]$, define
\[
        \widetilde Q^i(k):=
        \begin{cases}
        1, & \text{if }k\in\operatorname{supp}(H)\text{ and }Q^i(k)=h_k,\\
        Q^i(k), & \text{otherwise.}
        \end{cases}
\]
In each column, the three original upper levels are $2,3,4$.  At most one
of them is lowered, and it is lowered precisely to avoid the level used by
$H$.  No lowering occurs in column $j$, so the modified quads remain
disjoint from the seed.  Consequently, $\widetilde Q^1,\widetilde Q^2,
\widetilde Q^3$ are pairwise disjoint wide quads, disjoint from both $H$ and
$S$, and $\widetilde Q^i\preceq Q^i$ for every $i\in[3]$.  Stability gives
$\widetilde Q^i\in\F_0$.
Therefore,
\[
        H,\quad S,\quad \widetilde Q^1,\quad \widetilde Q^2,
        \quad \widetilde Q^3
\]
are five pairwise disjoint present traces in $V_\tau$, contradicting
Lemma~\ref{lem:local-matching-r4}.
\end{proof}

Hence Lemma~\ref{THM:Lay1-main-result} holds whenever $U\setminus Q_4$ contains three pairwise disjoint upper quads. It therefore remains only to prove the following lemma.

Before that, we say that $T_I$ has the \emph{mixed triple pattern} if
\[
T_I=S^3\cup(L\times S\times S)
\cup(S\times L\times S)\cup(S\times S\times L),
\]
where $S=\{1,2\}$ and $L=\{3,4\}$.
\begin{lemma}\label{cert:no-topstar-main}
Assume that the present upper cube $U\setminus Q_4$ contains no three pairwise disjoint upper quads and that $Q_4^U$ contains no full top-star. Then every genuine no-top-star \(19\)-board satisfies
\[
        300|T_3|+144|P_2|\le 625|Q_4|.
\]
\end{lemma}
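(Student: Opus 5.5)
We plan to reduce Lemma~\ref{cert:no-topstar-main} to a finite enumeration over the upper part $Q_4^U$, with the remaining data controlled column by column. By \eqref{eq:T3-P2-M} we may assume $|Q_4|\le 66$. Every present wide quad in $U$ with the usual shifting constraints is determined by the up-set $Q_4^U\subseteq U=\{2,3,4\}^4$, and $Q_4\cap U$ is itself an up-set in $U$. We intend to enumerate, up to the symmetry group permuting the four selected columns, all up-sets $\mathcal U\subseteq U$ with two properties. First, $U\setminus\mathcal U$ contains no three pairwise disjoint upper quads. Second, $\mathcal U$ contains no full top-star $S_k^+$ in any direction. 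The first property is what forces $\mathcal U$ to be large. In each column the three levels $2,3,4$ must be split among three disjoint quads, so the complement has no ``Latin'' triple. Together with the no-top-star condition, this gives a finite, deterministic search, which we organize by adding minimal elements in a fixed order and pruning as soon as $|\mathcal U|>66$.

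For each surviving up-set $\mathcal U$ we then bound the left-hand side, $300|T_3|+144|P_2|$, from above. Each present upper quad $Q\notin\mathcal U$, together with the seeds from Lemma~\ref{lem:seeds-r4} and the down-set structure, forbids certain pairs and triples by Lemma~\ref{lem:local-matching-r4}. Specifically, a triple $H_I(\mathbf v)$ is absent if it can be completed to five disjoint traces using present quads (after lowering as in the proof of Lemma~\ref{lem:upper-matching-branch}), a seed, and full columns. We will compute, for each $\mathcal U$, the largest down-sets $T_I$ and $P_J$ that are compatible with these obstructions. We will also use the no-bad-$3$-matching condition and the caps $|T_I|\le 32$ and $|P_J|\le4$. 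This yields explicit bounds $\tau(\mathcal U)\ge|T_3|$ and $\pi(\mathcal U)\ge|P_2|$. These bounds should be sharp enough except in configurations where some $T_I$ realizes the mixed triple pattern, which we expect to be the extremal competitor and which we treat separately.

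For the right-hand side we need a lower bound on $|Q_4|$ beyond $|\mathcal U|$. Missing quads containing a level-$1$ coordinate can also be forced. Indeed, a present pair or triple $H$ in a given support, combined with a seed and a present quad, excludes quads with entries $1$ that would complete a five-matching. For this part we plan to use rectangle-extension enumeration. For each choice of which $T_I,P_J$ are nonempty, we list the minimal forced missing quads in $[4]^4\setminus U$, and we take $|Q_4|\ge|\mathcal U|+(\text{forced count})$. We then check $300\tau+144\pi\le 625(|\mathcal U|+\text{forced})$ in exact integer arithmetic over all cases. Any case that fails the check gets refined: we branch on the exact membership of the few triples near the mixed pattern and rerun the search.

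We expect the main obstacle to be the coupling between large $T_I$, in particular the mixed pattern, and a small $\mathcal U$. There, $|T_3|$ can approach $128$ while $\mathcal U$ is modest, so the inequality only holds because many non-upper quads are forced missing. Our first attempt will be to show directly that the mixed pattern in support $I$ together with the seeds forces every quad $(x,4)$-type extension in the complementary column to be missing. Failing that, we will refine the enumeration by the exact sets $T_I$ instead of their cardinalities.
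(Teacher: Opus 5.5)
Your framework is a sound relaxation in principle, but it leaves the decisive step unproved. The difficulty is concentrated in the range $64\le|Q_4|\le 66$. There the trivial bound $300\cdot128+144\cdot24=41856$ exceeds $625|Q_4|\in[40000,41250]$ by less than $2000$, so the coupling between large $T_I$ and missing non-upper quads must be captured almost exactly. Your decoupling cannot capture it.
\begin{itemize}
\item $\tau(\mathcal U)$ and $\pi(\mathcal U)$ use only obstructions from quads known to be present given $\mathcal U$.
\item The right-hand increment is computed from which supports are nonempty. That certifies only the bottom traces $(1,1,1)$ and $(1,1)$, which are the least constraining: for $H_I(1,1,1)$ with seed $R_{\F}\cup\{e_{\ell,1}\}$, the residual hitting condition is literally (N2) again.
\item Seed-hitting produces disjunctive clauses, not individually forced quads. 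A ``list of minimal forced missing quads'' therefore undercounts; what is needed is a minimum hitting up-set computation conditioned on the actual high-level traces.
\end{itemize}
Your fallback, refining until the check passes and ultimately enumerating exact sets $T_I$, is just the exact joint search. You do not argue that it is feasible or that it closes, so the crux is deferred rather than proved.

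The missing idea is the paper's structural reduction, which is organised by ranges of $|Q_4|$.
\begin{itemize}
\item For $33\le|Q_4|\le63$, it uses per-trace thresholds $\theta(H)$: the presence of a single trace orbit forces $|Q_4|\ge\theta(H)$. This yields $t(m)\le29$ and $p(m)\le1$.
\item For $64\le|Q_4|\le66$, if no support attains the Ferrers maxima $|P_J|=4$ or $|T_I|=32$, then $300|T_3|+144|P_2|\le39792<40000$ immediately.
\item Otherwise the extremal support realises the pair square, a triple slab, or the mixed pattern. The mixed pattern needs $|Q_4|\ge80$, so it is not the extremal competitor you anticipate; it cannot occur at all.
\item The square and the slabs force a $64$-point rectangle $R_{ij}\subseteq Q_4$. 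This leaves only $72$ up-set extensions, with worst case $(|Q_4|,|T_3|,|P_2|)=(64,96,4)$.
\end{itemize}
Your proposed first attempt on the mixed pattern also fails as stated. Forcing the $(x,4)$-type extensions in the complementary column $\ell$ to be missing contradicts Lemma~\ref{fact:closure-general}: for $x\in T_I$, the quad $H_I(x)\cup\{e_{\ell,4}\}$ contains a present triple and is therefore present.
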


\begin{proof}

We claim that every $19$-board in the no-top-star branch satisfies conditions \textup{(N1)}--\textup{(N5)}  listed in Appendix~\ref{app:notopstar-spec}.
Indeed, \textup{(N1)} is the up-set property of \(Q_4\), which follows from the stability. \textup{(N2)} and \textup{(N3)} are exactly the assumptions that the present upper cube has no \(3\)-matching and that \(Q_4^U\) has no full top-star. 
For \textup{(N4)}, Lemmas~\ref{lem:pair-ferrers} and~\ref{lem:triple-ferrers} show that the supported pair and triple trace sets are down-sets and exclude, respectively, the forbidden pair and bad \(3\)-matching configurations.
Finally, completion closure in \textup{(N5)} is Lemma~\ref{fact:closure-general}. 
If a present trace and a disjoint seed from Lemma~\ref{lem:seeds-r4} left three pairwise disjoint present wide quads, these five traces would contradict Lemma~\ref{lem:local-matching-r4}; hence at least one of the three quads must lie in \(Q_4\), which is the required seed-hitting condition.

Conclusion~\ref{Con:no-topstar-upper-blocker} concerns only $U=\{2,3,4\}^4$.  
It is therefore
encoded uniquely by a monotone \(3\times3\times3\) height matrix
\(h\), where
\[
(a,b,c,d)\in U\setminus Q_4\quad\Longleftrightarrow\quad 2\le d < 2+ h[a,b,c].
\]
The no-top-star condition forces the four unit-top points to belong to
$U\setminus Q_4$. 
Moreover, after ordering three disjoint upper quads by their first coordinates, their other three coordinates are permutations of \(2,3,4\).
Thus all possible upper \(3\)-matchings are represented by the \(6^3=216\) triples of permutations. The program enumerates every monotone height matrix satisfying the four forced inclusions and having \(|U\setminus Q_4|\ge 49\), and tests it against all these matching templates. 
The branch-and-bound pruning discards a partial matrix only when the largest possible size of any monotone completion is at most \(48\), so it cannot discard a candidate relevant to the assertion. 
In the certified run, all \(3{,}116{,}823\) candidates of size at least \(49\) contain an upper \(3\)-matching. The program also checks an explicit admissible down-set of size \(48\), showing that the computed maximum is exact.
Consequently, every configuration satisfying \textup{(N1)}--\textup{(N3)} and containing no upper \(3\)-matching has
\(|U\setminus Q_4|\le48\), and hence
\[
|Q_4^U|=|U|-|U\setminus Q_4|\ge3^4-48=81-48=33.
\]
This computation uses only exhaustive integer enumeration and exact bit-mask containment tests; no numerical tolerance or heuristic optimization is involved. 

By Conclusion~\ref{Con:no-topstar-upper-blocker}, we have $|Q_4|\ge 33$, thus we only need to consider $33\le |Q_4|\le 66$. 
Conclusion~\ref{lem:no-topstar-trace-thresholds}, which is obtained from two finite computations, is used to handle the case where \(33\le |Q_4|\le63\). For a fixed pair or triple trace \(H\), the first
computation searches over the \(256\) possible wide quads and determines
a certified threshold \(\theta(H)\) such that the presence of \(H\)
forces \(|Q_4|\ge\theta(H)\). Closure forbids all wide-quad extensions of
\(H\) from \(Q_4\), while every upper matching and every residual
3-matching arising from a disjoint seed becomes a hitting clause
that \(Q_4\) must meet. 
The exact branch-and-bound solver chooses an unhit clause, adds the up-closure of one of its quads, and uses only logically valid unit-propagation and cardinality lower-bound pruning.
Thus exhausting all nodes below a target \(B\) proves that no feasible
up-set of size less than \(B\) exists.

There are only \(10\) pair-level orbits and \(20\) triple-level orbits
under permutations of the selected columns, so \(30\) such threshold
checks suffice. The second computation enumerates all legal down-sets
on one support: \(10\) pair down-sets and \(26{,}893\) triple down-sets
satisfying the obstructions in \textup{(N4)}. For a given \(m\), it
retains only trace levels with \(\theta(H)\le m\) and maximizes the size
of a legal supported down-set. This gives
\[
|P_J|\le p(m),
\qquad
|T_I|\le t(m),
\]
where \(p(m)\) and \(t(m)\) are listed in
Table~\ref{tab:no-topstar-thresholds}. This is a safe relaxation:
individual feasibility is only a necessary condition, and allowing all
individually feasible traces can only increase the resulting maximum.
There are 6 pair supports and 4 triple supports, so
\[
|P_2|\le6p(m),
\qquad
|T_3|\le4t(m).
\]
Substituting the eight ranges in
Table~\ref{tab:no-topstar-thresholds} gives that
\[
300\cdot4t(m)+144\cdot6p(m)\le625m.
\]
Therefore \(300|T_3|+144|P_2|\le625m\) throughout this range.

It remains to consider \(64\le m\le66\). Call a pair support \(J\)
extremal if \(|P_J|=4\), and a triple support \(I\) extremal if
\(|T_I|=32\); these are the maximum levels allowed by
Lemmas~\ref{lem:pair-ferrers} and~\ref{lem:triple-ferrers}. If no
support is extremal, then
\[
|P_2|\le6\cdot3=18,
\qquad
|T_3|\le4\cdot31=124,
\]
and hence
\[
300|T_3|+144|P_2|
\le300\cdot124+144\cdot18
=39792<625\cdot64\le625m.
\]
We are done. 

Suppose  that some support is extremal.
Conclusion~\ref{lem:no-topstar-critical-range} analyzes the equality patterns from the two Ferrers bounds by $9$ exact searches. 
Up to coordinate permutation, the possible patterns are one pair square, three triple slabs, and one mixed triple diagram. The pattern solver inserts every trace in the chosen diagram, imposes its closure and residual seed-hitting clauses, and then exhausts all missing up-sets below the stated target. 
The mixed triple pattern has certified minimum \(|Q_4|=80\), and hence cannot occur when \(|Q_4|\le66\). 
For each of the remaining patterns the solver also tests the negation of the required rectangle. 
Since \(Q_4\) is an up-set, saying that \(R_{ij}\nsubseteq Q_4\) is equivalent to forbidding its minimal point.
The resulting search has no feasible solution of size at most \(66\); hence one of the high-level rectangles
\[
R_{ij}:=\{q\in[4]^4:q_i\ge 3,\ q_j\ge 3\}
\]
must be contained in \(Q_4\).

Each such rectangle has \(2^2\cdot4^2=64\) points. Therefore an up-set
\(Q_4\supseteq R_{ij}\) with \(|Q_4|\le66\) differs from the rectangle by at
most two points, together with their required up-closures. The second
stage of the audit enumerates all such extensions for the six choices
of \(i,j\), removes duplicates, and retains those satisfying the
no-top-star and no-upper-matching conditions; exactly \(72\) cases
remain. For each fixed \(Q_4\), it checks closure and every residual seed
cut for all pair and triple levels, and then maximizes independently on
each support over the \(10\) legal pair down-sets and \(26{,}893\) legal
triple down-sets. Independence can only enlarge the set of globally
compatible traces, so the resulting sum is a rigorous upper bound. Its
worst case is
\[
|Q_4|=64,\qquad |T_3|=96,\qquad |P_2|=4,
\]
for which
\[
300|T_3|+144|P_2|-625|Q_4|
=-10624<0.
\]
Thus the required inequality also holds whenever an extremal support
is present. All possible values of \(m\) have now been covered.
This completes the proof. 
\end{proof}

\subsection{Proof of Theorem \ref{THM:Lay1-main-result}}

Now we prove Theorem \ref{THM:Lay1-main-result}. If $|Q_4|\ge67$, then we are done by \eqref{eq:T3-P2-M}. If $Q_4$ contains a full top-star, Lemma~\ref{lem:topstar} applies. Suppose $Q_4$ contains no full top-star. 
In this case, if the present upper wide quad contains an $3$-matching, then Lemma~\ref{lem:upper-matching-branch} gives $T_3=P_2=\emptyset$. So Theorem \ref{THM:Lay1-main-result}  is  obviously. Otherwise, Lemma~\ref{cert:no-topstar-main} applies.  

\section{Layer $2$: the $15$-board inequality}\label{sec:layer2}

On a $19$-board $V_\tau$, let $T_2$ be present triples of spread $2$, $P_1$ be present pairs of spread $1$, and $Q_3$ be missing quads of spread $3$ in $V_\tau$. The following is the main result in this section. 
\begin{theorem}\label{cor:layer2}
On every $19$-board, 
\[
\frac{24}{25}|T_2|+\frac{432}{625}|P_1|\le |Q_3|.
\]
\end{theorem}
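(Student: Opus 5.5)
The natural route is to reduce Theorem~\ref{cor:layer2} to the $15$-vertex subboards, in the same way that Layer~$1$ was treated on the full $19$-board. For $\sigma\in\binom{\tau}{3}$, let $W_\sigma:=R_{\F}\cup\bigcup_{k\in\sigma}E_k$, and let $T_2^\sigma$, $P_1^\sigma$, $Q_3^\sigma$ be the members of $T_2,P_1,Q_3$ contained in $W_\sigma$.

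\textbf{Step 1: localize.} A spread-$3$ quad meets exactly three selected columns, so it lies in exactly one $W_\sigma$. A spread-$2$ triple lies in exactly two of the $W_\sigma$, and a spread-$1$ pair lies in exactly three. Hence
\[
\tfrac{24}{25}|T_2|+\tfrac{432}{625}|P_1|=\sum_{\sigma}\Bigl(\tfrac{12}{25}|T_2^\sigma|+\tfrac{144}{625}|P_1^\sigma|\Bigr),\qquad |Q_3|=\sum_\sigma|Q_3^\sigma|.
\]
So it suffices to prove, on every $15$-board $W_\sigma$,
\[
\tfrac{12}{25}|T_2^\sigma|+\tfrac{144}{625}|P_1^\sigma|\le|Q_3^\sigma|.
\]
These are the same coefficients as in Layer~$1$.

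\textbf{Step 2: local matching bound.} Exactly as in Lemma~\ref{lem:local-matching-r4}, $W_\sigma$ contains no four pairwise disjoint traces of $\F_0$. Otherwise, adding the remaining $s-3$ matching edges would give $s+1$ disjoint traces, contradicting Lemma~\ref{lem:trace-matching-general}.

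\textbf{Step 3: the objects on $W_\sigma$.}
\begin{itemize}
\item The relevant quads are of two kinds: one vertex of $R_{\F}$ plus one vertex in each of the three columns, or two vertices in one column plus one in each of the other two.
\item The triples of spread $2$ and the pairs of spread $1$ may also use vertices of $R_{\F}$.
\end{itemize}

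\textbf{Step 4: down-sets and Ferrers-type caps.} I would encode each support pattern (which columns, how many vertices per column, which vertices of $R_{\F}$) as a level vector. Stability makes the present small traces down-sets and $Q_3^\sigma$ an up-set, by the argument of Lemma~\ref{lem:pair-ferrers}.

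The obstructions come from Step~2 combined with the seeds of Lemma~\ref{lem:seeds-r4}. Typical forbidden configurations are:
\begin{itemize}
\item a present triple of spread $2$, a seed $R_{\F}\cup\{e_{k,t}\}$ in a free level, and two disjoint present traces filling the rest;
\item two disjoint present pairs in one column together with a seed and a disjoint quad.
\end{itemize}
Mimicking Lemmas~\ref{lem:pair-ferrers} and~\ref{lem:triple-ferrers}, this gives explicit caps on $|T_2^\sigma|$ and $|P_1^\sigma|$, hence a constant $C$ with $\tfrac{12}{25}|T_2^\sigma|+\tfrac{144}{625}|P_1^\sigma|\le C$. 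That settles every board with $|Q_3^\sigma|\ge C$.

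\textbf{Step 5: boards with few missing quads.} In the remaining range I would argue as in Lemmas~\ref{lem:upper-matching-branch} and~\ref{cert:no-topstar-main}. If the present quads of $W_\sigma$ contain two disjoint members avoiding the first levels, then any present small trace $H$ can be completed to four disjoint traces: $H$ itself, a seed in a column level not used by $H$, and the two quads lowered to avoid $H$. So $T_2^\sigma=P_1^\sigma=\emptyset$.

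Otherwise, each present small trace $H$ forces certain quads to be missing. By Lemma~\ref{fact:closure-general}, every quad extending $H$ is present. Every residual matching disjoint from $H$ and a seed must then be hit by $Q_3^\sigma$. I would convert this into a threshold $\theta(H)$ such that the presence of $H$ forces $|Q_3^\sigma|\ge\theta(H)$. Then, as before, I would maximize $|T_2^\sigma|$ and $|P_1^\sigma|$ over legal down-sets built only from traces with $\theta(H)\le m$. The board is much smaller than in Layer~$1$ (three columns of four levels plus three $R$-vertices), so I expect these checks to be short exhaustive searches, or even hand arguments.

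\textbf{Main obstacle.} The hard part is Step~5: proving that the forced missing quads outweigh the small traces in the low-$|Q_3^\sigma|$ regime. The difficulty is that a single missing quad can serve as the blocker for many different small traces at once. I would first try an explicit injection from pairs of small traces to missing quads, built from the bad-matching patterns. If that fails, I would fall back to a finite LP/Farkas certificate of the kind used in Lemma~\ref{lem:topstar}.
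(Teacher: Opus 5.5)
Your Steps 1 and 2 are correct and coincide with the paper's reduction. The statement follows by summing the $15$-board inequality $300|T_\sigma|+144|P_\sigma|\le 625|Q_\sigma|$ over the four $\sigma\subseteq\tau$ with multiplicities $2,3,1$. Your obstruction (no four disjoint traces in $W_\sigma$) is the same as the paper's use of the outside column $E_4$ together with Lemma~\ref{lem:local-matching-r4}.

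\textbf{The gap.} The $15$-board inequality itself, which is the entire content of the theorem, is never proved. Step~4 asserts caps and a constant $C$ without deriving them. Step~5 defers the small-$|Q_3^\sigma|$ range to unspecified threshold searches or a Farkas certificate, and your ``main obstacle'' paragraph concedes this.

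Moreover, the one concrete reduction in Step~5 does not go through. The Layer~1 lowering trick works because all objects are wide, so lowering within columns preserves disjointness. On a three-column board this breaks in two ways.
\begin{itemize}
\item A trace in $T^1$ or $P^1$ meets $R_{\F}$, hence meets every seed.
\item Two disjoint quads avoiding $R_{\F}$ and level $1$ use $8$ of the $9$ upper vertices. Their column profile is therefore a permutation of $(3,3,2)$, and lowering cannot change it. Take $H$ with two vertices in $E_1$ and one in $E_2$, and the seed in $E_3$. Then $W_\sigma\setminus(H\cup S)$ has profile $(2,3,3)$, so the lowered quads generally do not fit.
\end{itemize}
So $T_2^\sigma=P_1^\sigma=\emptyset$ does not follow.

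\textbf{The missing idea.} No case analysis on $|Q_3^\sigma|$, no Ferrers caps and no search are needed. The paper proves the $15$-board inequality by a single explicit fractional double count, which is exactly the answer to your ``one missing quad blocks many traces'' worry. For each candidate trace $H$ it takes all covering inequalities of two kinds, using only spread-$3$ quads as blockers so that the right side involves only $Q_\sigma$:
\begin{itemize}
\item $x_H\le m_{Q^1}+m_{Q^2}+m_{Q^3}$, over all decompositions of $W_\sigma\setminus H$ into three disjoint spread-$3$ quads, or of $W_\sigma\setminus(H\cup\{u\})$ for a pair meeting $R_{\F}$;
\item $x_H\le m_{Q^1}+m_{Q^2}$, after first removing a seed, when $H$ avoids $R_{\F}$.
\end{itemize}
These rows are weighted so that every spread-$2$ triple gets total coefficient $300$ and every spread-$1$ pair gets $144$:
\begin{itemize}
\item weight $\tfrac{25}{36}$ on the $432$ rows of a triple in $T^1$;
\item weight $\tfrac1{36}$ on the $5184$ rows of a pair in $P^1$, and $\tfrac13$ on the $432$ rows of a pair in $P^0$;
\item for $T^0$: weight $\tfrac{25}{3}$ on the $36$ omitted-column seed rows when the singleton of $H$ is at level $3$ or $4$; otherwise $\tfrac{23}{12}$ on those rows and $\tfrac{77}{4}$ on the $12$ singleton-column seed rows.
\end{itemize}
A direct count then shows that each spread-$3$ quad carries load exactly $495$ if it meets $R_{\F}$ and at most $625$ otherwise. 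Summing all weighted rows gives the inequality on every $15$-board at once.
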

For a three-column subset $\sigma\subseteq\tau$, define the $15$-board $V_\sigma\coloneqq R_{\F}\cup\bigcup_{i\in\sigma}E_i$.
Let $T_\sigma$ be present triples of spread $2$ in this $15$-board, $P_\sigma$ be present pairs of spread $1$, and $Q_\sigma$ be missing quads of spread $3$.  
\begin{theorem}\label{prop:15-board}
Every genuine \(15\)-board arising as a three-column subboard $\sigma$ of a genuine \(19\)-board satisfies 
\begin{equation}\label{eq:15-target}
 300|T_\sigma|+144|P_\sigma|\le 625|Q_\sigma|.
\end{equation}
\end{theorem}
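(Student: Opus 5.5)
The plan is to mirror the Layer~1 argument on the smaller board, replacing Lemma~\ref{lem:local-matching-r4} by its three-column analogue. Fix $\sigma=\{1,2,3\}$ after relabelling. The first step is to show that $V_\sigma$ contains no four pairwise disjoint traces of $\F_0$: such traces, together with the $s-3$ matching edges outside $\sigma$, would give $s+1$ disjoint traces, contradicting Lemma~\ref{lem:trace-matching-general}. Seeds $R_\F\cup\{e_{k,1}\}$ and $R_\F\cup\{e_{k,2}\}$ remain available by Lemma~\ref{lem:seeds-r4}.

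Next I would classify the objects on $V_\sigma$. A triple of spread $2$ either has two vertices in one column and one in another, or one vertex in each of two columns and one vertex of $R_\F$. A pair of spread $1$ either lies in a single column or consists of one column vertex and one vertex of $R_\F$. A missing quad of spread $3$ either has one vertex of $R_\F$, or two vertices in one column and one in each of the others. As in Lemma~\ref{lem:pair-ferrers}, stability makes each family of present traces with fixed support pattern a down-set in the appropriate level poset; likewise $Q_\sigma$ is an up-set. The local obstruction is the same as in Claim~\ref{pair-existence} and Lemma~\ref{lem:triple-ferrers}. If a present small trace $H$, a seed disjoint from it, and present spread-$3$ quads together covered four disjoint slots, we would have a contradiction. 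So every such configuration of quads must be hit by $Q_\sigma$.

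From these facts I would first derive absolute Ferrers-type bounds $|T_\sigma|\le t_{\max}$ and $|P_\sigma|\le p_{\max}$, arguing support by support with the seed obstruction exactly as in Lemmas~\ref{lem:pair-ferrers} and~\ref{lem:triple-ferrers}. This settles \eqref{eq:15-target} whenever $625|Q_\sigma|\ge 300t_{\max}+144p_{\max}$. The remaining case is $|Q_\sigma|$ below this threshold. There I would use the analogue of Lemma~\ref{lem:upper-matching-branch}. If the present upper spread-$3$ quads (levels $\ge2$) contain enough disjoint members to combine with one seed, then every small trace is blocked and $T_\sigma=P_\sigma=\emptyset$. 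Otherwise $Q_\sigma$ must meet every such upper matching template, which forces $|Q_\sigma|$ up. For this residual range I would compute, for each orbit of a single trace level $H$, a certified threshold $\theta(H)$ such that the presence of $H$ forces $|Q_\sigma|\ge\theta(H)$. This would be done by exact up-set branch-and-bound using the closure of Lemma~\ref{fact:closure-general} and the seed-hitting clauses. Maximizing legal supported down-sets restricted to levels with $\theta(H)\le m$ then yields bounds $t(m)$ and $p(m)$, and the inequality $300\,t(m)+144\,p(m)\le625m$ can be checked for each $m$.

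The main obstacle I expect is the top of this residual range, where an extremal down-set on some support is nearly allowed. I would handle it as in Lemma~\ref{cert:no-topstar-main}. If no support is extremal, a slack computation finishes the case. If some support is extremal, I would run exact pattern searches showing that $Q_\sigma$ must contain a specific large rectangle. I would then enumerate the few up-set extensions of that rectangle within the size budget and maximize traces over each one exactly. If this enumeration is too large, I would fall back on rational Farkas certificates over branches indexed by $|Q_\sigma|$.

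Finally, summing \eqref{eq:15-target} over the four three-column subsets $\sigma\subseteq\tau$ proves Theorem~\ref{cor:layer2}. Each spread-$2$ triple lies in two $15$-boards, each spread-$1$ pair in three, and each spread-$3$ quad in exactly one. Summing therefore gives $600|T_2|+432|P_1|\le625|Q_3|$, which is Theorem~\ref{cor:layer2}.
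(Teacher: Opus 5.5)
There is a genuine gap. Your proposal is a computational programme rather than a proof. Every quantitative ingredient is left to searches you have not run: $t_{\max}$, $p_{\max}$, the thresholds $\theta(H)$, the bounds $t(m),p(m)$, and the extremal patterns and forced rectangles. You even keep a fallback to Farkas certificates in case the enumeration is too large. Nothing therefore establishes that the final check $300\,t(m)+144\,p(m)\le 625m$ succeeds, and that check is a lossy relaxation, since each trace is treated individually.

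In addition, part of the Layer~1 machinery does not transfer. Every seed $R_{\F}\cup\{e_{k,u}\}$ contains all of $R_{\F}$. So for the spread-$2$ triples and spread-$1$ pairs containing a vertex of $R_{\F}$, no seed is disjoint from $H$. The seed obstructions of Claim~\ref{pair-existence} and Lemma~\ref{lem:triple-ferrers} say nothing about these traces. Neither does your analogue of Lemma~\ref{lem:upper-matching-branch}, the claim that a seed plus disjoint present upper quads forces $T_\sigma=P_\sigma=\emptyset$. For such $H$, the only constraint is that $H$ and three pairwise disjoint spread-$3$ quads of $V_\sigma$ cannot all be present, since together with $E_4$ they would give five disjoint traces. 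Your ``no four disjoint traces in $V_\sigma$'' observation is correct, but your hitting clauses and blocking step are formulated only with a seed.

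The missing idea is that no case split on $|Q_\sigma|$, no Ferrers bounds and no search are needed. The hitting inequalities alone give the bound through an explicit fractional weighting.

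\textbf{Traces meeting $R_{\F}$.} The paper averages $x_H\le m_{Q^1}+m_{Q^2}+m_{Q^3}$ over all decompositions of $V_\sigma\setminus H$ into three disjoint spread-$3$ quads; for a pair, one further vertex is discarded first. The weights are $25/36$ on the $432$ rows of a triple and $1/36$ on the $5184$ rows of a pair.

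\textbf{Traces inside the columns.} Here the paper averages $x_H\le m_{Q^1}+m_{Q^2}$ over a disjoint seed together with a decomposition of the rest. A pair gets weight $1/3$ on each of its $432$ rows. A triple whose singleton sits at level $3$ or $4$ gets weight $25/3$ on the $36$ rows with the seed in the omitted column. If the singleton sits at level $1$ or $2$, it instead gets $23/12$ on those $36$ rows plus $77/4$ on the $12$ rows with the seed in the singleton column.

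\textbf{Loads.} Let $E_1$ be the doubled column of $H_Q$, let $a=|H_Q\cap\{e_{1,1},e_{1,2}\}|$, and let $k$ be the number of singleton columns in which $H_Q$ sits at level $1$ or $2$. A direct count shows that each spread-$3$ quad $H_Q$ receives load $495$ if it meets $R_{\F}$. Otherwise it receives $625-2a-\tfrac{185+77a}{2}k\le 625$, with equality exactly for quads $\{e_{1,3},e_{1,4},e_{2,j},e_{3,j'}\}$ with $j,j'\in\{3,4\}$. This yields $300|T_\sigma|+144|P_\sigma|\le 625|Q_\sigma|$ by hand. The tightness at these quads is why the level-dependent weights are needed.
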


\begin{proof}[Proof of Theorem \ref{cor:layer2} assuming Theorem \ref{prop:15-board}]
Fix a $19$-board $V_\tau$, and a three-column subset $\sigma\subseteq\tau$. 
By Theorem~\ref{prop:15-board}, \begin{equation*}\label{eq:15-target}
        \frac{12}{25}|T_\sigma|+\frac{144}{625}|P_\sigma|\le |Q_\sigma|. 
\end{equation*}
Summing over the four $\sigma$ inside $\tau$ completes the proof. This is because a spread-$2$ triple is seen in two $15$-boards, a spread-1 pair is seen in three, and a spread-$3$ missing quad is seen in exactly one.
\end{proof}

Now we prove Theorem \ref{prop:15-board}
\begin{proof}[Proof of Theorem \ref{prop:15-board}]
Without loss of generality, we write the $15$-board as $V_{\sigma}=R_{\F}\cup E_1\cup E_2\cup E_3$.
$E_4$ is the fourth selected column of the $19$-board, the one not contained in this $15$-board.
For a  quad \(H_Q\subseteq V_{\sigma}\) of spread \(3\), let \(m_{H_Q}\) be its missing indicator, that is, $m_{H_Q}=1$ if and only if $H_Q\notin \F^0$. 
For a spread-\(2\) triple or a  spread-\(1\)  pair \(H\subseteq V_{\sigma}\), let \(x_H\) be its present indicator.
 
First, suppose that \(H\) is a spread-\(2\) triple or a spread-\(1\) pair, and suppose that \(H_Q^1, H_Q^2, H_Q^3\) are three pairwise disjoint spread-\(3\) quads, all disjoint from \(H\).  Then
\begin{align}\label{eq:x<mQ123}
    x_H\le m_{H_Q^1}+m_{H_Q^2}+m_{H_Q^3}.
\end{align}
Indeed, if \(H, H_Q^1, H_Q^2, H_Q^3\) were all present, then together with the outside full column \(E_4\) they would form five pairwise disjoint present traces in the ambient $19$-board, contradicting Lemma~\ref{lem:local-matching-r4}.

Second, by the same reason, suppose that \(S\) is one of the seed from $E_1, E_2$ or $E_3$, disjoint from \(H\), and that \(H_Q^1, H_Q^2\) are two pairwise disjoint spread-\(3\) quads, disjoint from both \(H\) and \(S\).  Then
\begin{align}\label{eq:x<mQ12}
    x_H\le m_{H_Q^1}+m_{H_Q^2}.
\end{align}

We refer to each such configuration of five pairwise disjoint traces, with $H$ as the distinguished trace, as an \emph{$H$-row}.
Let $T^1, T^0, P^1, P^0$ denote respectively the following four classes of possible traces:
\[
\begin{array}{ll}
T^1:\text{spread-\(2\) triples containing one vertex of }R_{\F},\\
T^0:\text{spread-\(2\) triples containing no vertex of }R_{\F},\\
P^1:\text{spread-\(1\) pairs containing one vertex of }R_{\F},\\
P^0:\text{spread-\(1\) pairs containing no vertex of }R_{\F}.
\end{array}
\]
Then
\[
|T_\sigma|=\sum_{H\in T^1\cup T^0}x_H,
\qquad
|P_\sigma|=\sum_{H\in P^1\cup P^0}x_H.
\]
We now take an explicit nonnegative weighted sum of inequalities of the forms
\eqref{eq:x<mQ123} and \eqref{eq:x<mQ12}.  More precisely, for each possible
trace $H$, we enumerate suitable decompositions of the remaining vertices into
pairwise disjoint spread-$3$ quads, with a seed included when necessary.  Each
such decomposition gives one valid inequality, which we regard as a row, and
we assign a nonnegative weight to every row.  The weights are chosen so that,
after summing all rows, every spread-$2$ triple has total left-hand coefficient
$300$ and every spread-$1$ pair has total left-hand coefficient $144$.  We then
count the total coefficient, or load, received by each variable $m_Q$ on the
right-hand side and verify that it is at most $625$.

\medskip
\noindent
\textbf{1. For \(T^1\).}
Let \(H\in T^1\).  Suppose, without loss of generality, that \(H\) contains one vertex of \(R_{\F}\), one vertex of \(E_1\), and one vertex of \(E_2\).
Then \(V_{\sigma}\setminus H\) has
\[
2 \text{ vertices in }R_{\F},\qquad
3 \text{ vertices in }E_1,\qquad
3 \text{ vertices in }E_2,\qquad
4 \text{ vertices in }E_3.
\]
Later on, we will use the form $R_{\F}^2E_1^3E_2^3E_3^4$ to express this meaning. We take all unordered decompositions
\[
V_{\sigma}\setminus H=H_Q^1\cup H_Q^2\cup H_Q^3
\]
into three pairwise disjoint spread-\(3\) quads. 
There are \(432\) such decompositions.  Here is the count.  
Since \(E_3\) has four remaining vertices, exactly one of the three quads must contain two \(E_3\)-vertices, one $E_1$-vertex and $E_2$-vertex. 
The remaining two \(E_3\)-vertices distinguish the two remaining quads. We then match the two remaining \(E_1\)-vertices, the two remaining \(E_2\)-vertices, and the two remaining \(R_{\F}\)-vertices, to these two quads.  
Hence the number of $T^1$-rows is
\[
\binom42\cdot 3\cdot 3\cdot 2!\cdot 2!\cdot 2!=432.
\]
Each of the $432$ decompositions gives one inequality of the form
\eqref{eq:x<mQ123}.  Summing these inequalities gives
\[
 432x_H\le
 \sum_{H_Q^1\cup H_Q^2\cup H_Q^3=V\setminus H}
 (m_{H_Q^1}+m_{H_Q^2}+m_{H_Q^3}).
\]

We now give each of these $432$ inequalities weight $\frac{25}{36}$. Then every \(H\in T^1\) receives total left-hand coefficient \(300\).
Note that in the following discussion on $m_Q$, the $m_Q$ associated with $T^1$ is similarly multiplied by this weight. The same applies to the rest with their respective weights, a convention we will not emphasize further.

\medskip
\noindent
\textbf{2. For \( T^0\).}
Let \(H\in T^0\). 
Without loss of generality, we assume $|H \cap E_1|=2$, $|H\cap E_2|=1$.

For this fixed $H$, let \(\mathcal O(H)\) be the family of inequality rows
obtained by choosing a seed in the column omitted by $H$ and then decomposing
the remaining vertices into two quads.  
Here the omitted column is $E_3$, so the seed is $R_{\F}\cup\{e_{3,j}\}$, with two possible choices by Lemma \ref{lem:seeds-r4}.  
After deleting $H$ and the chosen seed, the remaining type is $E_1^2E_2^3E_3^3$.  
Every unordered decomposition of these eight vertices into two pairwise disjoint spread-\(3\) quads gives one row of the form \eqref{eq:x<mQ12}, and \(\mathcal O(H)\) consists of all such rows.
The two $A$-vertices label the two quads, so we get $2\binom32\binom31=18$ decompositions for each such seed. Thus,  
\begin{align}\label{|O(H)|=36}
    |\mathcal O(H)|=36.
\end{align}
The second family, denoted \(\mathcal S(H)\), consists of the analogous rows
obtained by choosing the seed in the singleton column $E_2$, namely
$R_{\F}\cup\{e_{2,j}\}$. If $H\cap E_2\in\{e_{2,3}, e_{2,4}\}$, then both seeds in \(E_2\) are available.  If $H\cap E_2\in\{e_{2,1}, e_{2,2}\}$, then only the other seed is available.
For a fixed available seed in \(E_2\), the remaining type is $E_1^2E_2^2E_3^4$.
The two \(E_1\)-vertices label the two quads, this gives
\[
2\binom42=12
\]
decompositions for each fixed seed. Hence
\begin{align}\label{|S(H)|=24/12}
    |\mathcal S(H)|=
\begin{cases}
24,& H\cap E_2\in \{e_{2,3}, e_{2,4}\},\\
12,& H\cap E_2\in \{e_{2,1}, e_{2,2}\}.
\end{cases}
\end{align}

We now assign weights.  If $H\cap E_2\in \{e_{2,3}, e_{2,4}\}$, we use only the \(\mathcal O(H)\)-rows, each with weight $\frac{25}{3}$. 
Then, by \eqref{|O(H)|=36},
\[
36\cdot\frac{25}{3}=300.
\]

If $H\cap E_2\in \{e_{2,1}, e_{2,2}\}$, we use the \(\mathcal O(H)\)-rows with weight $\frac{23}{12}$, and the \(\mathcal S(H)\)-rows with weight $\frac{77}{4}$.
Then, by \eqref{|O(H)|=36} and \eqref{|S(H)|=24/12},
\[
36\cdot\frac{23}{12}
+
12\cdot\frac{77}{4}
=
69+231
=
300.
\]
Thus every \(H\in T^0\) receives total left-hand coefficient
\(300\).

\medskip
\noindent
\textbf{3. For \(P^1\).}
Let \(H\in P^1\).  Suppose, for example, that \(H\) contains one vertex of \(R_{\F}\) and one vertex of \(E_1\).  We choose one unused vertex $u\in V_{\sigma}\setminus H$ and then decompose
\[
V_{\sigma}\setminus(H\cup\{u\})
=
H_Q^1\cup H_Q^2\cup H_Q^3
\]
into three pairwise disjoint spread-\(3\) quads.
The unused vertex \(u\) cannot lie in \(E_1\), otherwise only two \(E_1\)-vertices would remain, and three spread-\(3\) quads would not all be able to meet \(E_1\).
Hence, there are two cases.

If \(u\in R_{\F}\), then there are \(2\) choices for \(u\).  The remaining vertices are $R_{\F}^1E_1^3E_2^4E_3^4$.
The three \(E_1\)-vertices label the three quads. One quad must contain two \(E_2\)-vertices, one must contain two \(E_3\)-vertices, and one must contain the single vertex of \(R_{\F}\). 
Thus there are $3!\binom42\binom42\cdot 2\cdot 2=864$ decompositions for each such \(u\).

If $u\in E_2 \cup E_3$, then there are \(8\) choices for \(u\). Without loss of generality, the remaining vertices are $R_{\F}^2E_1^3E_2^3E_3^4$.
Hence there are \(\binom{4}{2}\cdot3\cdot3\cdot2\cdot2\cdot2=432\) decompositions for each such \(u\).

Therefore the number of rows for a fixed \(H\in P^1\) is
\[
2\cdot 864+8\cdot 432=5184.
\]
For every one of these decompositions we use \eqref{eq:x<mQ123} with weight $\frac1{36}$. Then every \(H\in P^1\) receives left-hand coefficient \(144\).

\medskip
\noindent
\textbf{4. For \(P^0\).}
Let \(H\in P^0\).  Suppose, without loss of generality, that \(H\subseteq E_1\). Choose a seed in one of the two other columns \(E_2, E_3\).  Thus the seed is one of
\[
R_{\F}\cup\{e_{2,1}\},\quad R_{\F}\cup\{e_{2,2}\},\quad
R_{\F}\cup\{e_{3,1}\},\quad R_{\F}\cup\{e_{3,2}\}.
\]
After choosing the seed, choose one unused vertex and decompose the remaining eight vertices into two spread-\(3\) quads.  We use the seed version \eqref{eq:x<mQ12}.

For a fixed seed, say \(R_{\F}\cup\{e_{2,i}\}\).
If the unused vertex lies in \(E_2\), there are \(3\) choices for it, and the remaining type is \(E_1^2E_2^2E_3^4\).  The two \(E_1\)-vertices label the two quads; after choosing the matching \(B\)-vertex and the two \(E_3\)-vertices for the first \(E_1\)-labelled quad, we get $2\binom42=12$ decompositions for each such unused vertex.

If the unused vertex lies in \(E_3\), there are \(4\) choices for it,
and the remaining type is \(E_1^2E_2^3E_3^3\).  Again the two \(E_1\)-vertices
label the two quads. We get $2\binom32\binom31=18$ decompositions for each such unused vertex.

Thus, for each fixed seed, the number of decompositions is $3\cdot 12+4\cdot 18=108$.
There are \(4\) possible seeds, so a fixed \(H\in P^0\) generates $4\cdot108=432$ rows.  We give each such row weight $\frac13$. Therefore every \(H\in P_0\) receives total coefficient $144$.

Combining the four classes, the left-hand side of the total weighted
sum is exactly
\[
300|T_\sigma|+144|P_\sigma|.  
\]

For a spread-$3$ quad $H_Q$, define its \emph{load}, to be the total coefficient with which $m_{H_Q}$ appears on the right-hand side of the weighted sum of the selected inequalities. 
It remains to prove that no spread-\(3\) missing quad \(H_Q\) receives load greater than \(625\).  

\medskip
\noindent
\textbf{Case 1. Load when \(H_Q\cap R_{\F} \neq\emptyset\).}

Note that \(H_Q\) contains exactly one vertex of \(R_{\F}\) and one vertex in each of \(E_1, E_2, E_3\).
Only the \( T^1\)-rows and the \(P^1\)-rows can contain such a \(H_Q\).  

First count the \(T^1\)-rows containing this fixed \(H_Q\).
Choose the two selected columns used by \(H\); there are \(3\) choices.
For a fixed choice, say \(E_1, E_2\), the trace \(H\) must choose one of the two $R_{\F}$-vertices not in \(H_Q\), one of the three \(E_1\)-vertices not in \(H_Q\), and one of the three \(E_2\)-vertices not in \(H_Q\).  
This gives $2\cdot3\cdot3=18$ choices for \(H\). After deleting \(H_Q\cup H\), the remaining vertices have type $R_{\F}^1E_1^2 E_2^2 E_3^3$.
They can be decomposed into two spread-\(3\) quads in $2\cdot2\cdot3=12$ ways.
Thus the number of \(T^1\)-rows containing \(H_Q\) is $3\cdot18\cdot12=648$. 

Since each has weight \(25/36\), the \(T^1\)-load is
\begin{equation}\label{EQ:load-Q-T1}
   648\cdot\frac{25}{36}=450.  
\end{equation}

Next count the \(P^1\)-rows containing \(H_Q\). Suppose first that \(H\) uses column \(E_1\). There are $2\cdot3=6$ choices for \(H\), since it must choose one remaining vertex and one \(E_1\)-vertex outside \(H_Q\).  
After deleting \(H_Q\cup H\), the remaining type is $R_{\F}^1E_1^2 E_2^3 E_3^3$.
If the unused vertex is the remaining \(R_{\F}\)-vertex, then the remaining type is \(E_1^2 E_2^3 E_3^3\), giving \(18\) decompositions.  
If the unused vertex lies in \(E_2\cup E_3\), then there are \(6\) choices for it, and each choice gives \(12\) decompositions.  Hence, for each fixed \(H\), the number of this rows is $18+6\cdot12=90$.
Thus column \(E_1\) contributes \(6\cdot90=540\) pairs.  The same count
holds for columns \(E_2\) and \(E_3\), so the total number of \( P^1\)-rows containing \(H_Q\) is $3\cdot540=1620$.
Since each has weight \(1/36\), the \( P^1\)-load is
\begin{equation}\label{EQ:load-Q-P1}
1620\cdot\frac1{36}=45. 
\end{equation}

Therefore, combining \eqref{EQ:load-Q-T1} and \eqref{EQ:load-Q-P1}, if \(Q\cap R_{\F}\neq\emptyset\), the coefficient of $m_Q$ is $450+45=495\le 625$.

\medskip
\noindent
\textbf{Case 2. Load when \(H_Q \cap R_{\F}=\emptyset\).}

Without loss of generality, assume $|Q\cap E_1|=2$ and $|Q\cap E_2|=|Q\cap E_3|=1$. 
For the \(T^1\)-rows, $H$ must use exactly one vertex from each of $R_{\F}$, $E_2$, and $E_3$. There are $3\cdot3\cdot3=27$ choices for \(H\). 
After deleting \(H_Q\cup H\), the remaining type is $R_{\F}^2 E_1^2 E_2^2 E_3^2$,
which can be decomposed into two spread-\(3\) quads in \(2^3=8\) ways.  
Thus the number of \(T^1\)-rows containing \(H_Q\) is $27\cdot8=216$, and their contribution to coefficient of $m_{H_Q}$ is
\begin{align}\label{eq:150}
    216\cdot\frac{25}{36}=150.
\end{align}

For the \(P^1\)-rows, \(H\) must use exactly one vertex from $R_{\F}$ and $E_2$ (or $E_3$). Suppose \(H\) uses \(E_2\). There are $3\cdot3=9$ choices for \(H\). After deleting \(H_Q\cup H\), the remaining type is $R_{\F}^2 E_1^2 E_2^2 E_3^3$.
If the unused vertex lies in \(R_{\F}\), there are \(2\) choices and each gives \(12\) decompositions.  If it lies in \(E_3\), there are \(3\) choices and each gives \(8\) decompositions.  Thus, for each \(H\), there are
\[
2\cdot12+3\cdot8=48
\]
rows.  
Hence the choice \(H\) in column \(E_2\) contributes \(9\cdot48=432\) rows.  The same holds with \(E_3\).  
Thus the total number of \(P^1\)-pairs containing \(H_Q\) is $2\cdot432=864$, and their contribution to coefficient of $m_{H_Q}$ is
\begin{align}\label{eq:24}
    864\cdot\frac1{36}=24. 
\end{align}
Let $a:=|Q\cap \{e_{1,1}, e_{1,2}\}|\in\{0,1,2\}$,
and $\mathbf{y}, \mathbf{z}$ be indicate function whether $H_Q\cap E_2\in \{1, 2\}$ and $Q\cap E_3\in \{1, 2\}$, respectively.  
Put
\[
k:=\mathbf{y}+\mathbf{z}\in\{0,1,2\}.
\]
For the \(P^0\)-rows, \(H\) must be a pair inside $E_2$ or $E_3$.  
Suppose first that \(H\subseteq E_2\), then \(H\) can be chosen in \(\binom32=3\) ways. 
If the seed $S$ is in \(E_3\),  the seed can be chosen in \(2-\mathbf{z}\) ways, and after \(H_Q,H,S\) are fixed there are \(4\) choices to spread-$3$ quads by remaining vertices. The number of such rows is
\[
3\cdot(2-\mathbf{z})\cdot4=12(2-\mathbf{z}).
\]

If the seed is in \(E_1\), the seed can be chosen in \(2-a\) ways, and after \(H_Q,H,S\) are fixed there are \(3\) choices to spread-$3$ quads by remaining vertices. The number of such rows is
\[
3\cdot(2-a)\cdot3=9(2-a).
\]
The same argument with \(E_2, E_3\) interchanged gives $12(2-\mathbf{y})+9(2-a)$ rows. Therefore the total number of \(P^0\)-rows containing \(Q\) is
\[
12(2-\mathbf{z})+12(2-\mathbf{y})+18(2-a)
=
84-18a-12k.
\]
Since each such row has weight $\frac{1}{3}$, the contribution to coefficient of $m_{H_Q}$ is
\begin{align}\label{eq:28-6r-4k}
    \frac13(84-18a-12k)=28-6a-4k. 
\end{align}
It remains to count the \(T^0\)-rows containing \(H_Q\).  
There is no unused vertex in these rows: once \(H\), the seed \(S\), and \(H_Q\) are fixed, the other variable quad is forced to be $H_Q'=V\setminus(H_Q\cup H\cup S)$.
Thus we only need to count the choices for which this forced \(H_Q'\)
is still spread-\(3\).

For the \(\mathcal O\)-rows, the seed lies in the column disjoint from $H$.  
A direct column-by-column count gives:
\begin{align}\label{eq:type-of-O-row}
    \begin{array}{c|c}
\text{type of \(\mathcal O\)-row} & \text{number of rows containing \(H_Q\)}\\ \hline
\text{singleton of \(H\) is $e_{i,3}$ or $e_{i,4}$}
&
12+6a+6k-6ak
\\[2pt]
\text{singleton of \(H\) is $e_{i,1}$ or $e_{i,2}$}
&
6(2-a)(4-k).
\end{array}
\end{align}

For completeness, we indicate the count.  If the column disjoint from $H$ is \(E_1\), the seed has \(2-a\) choices. For the case of $|E_2\cap H|=2$ and $|E_3\cap H|=1$, the number of choices of $E_3\cap H\in \{e_{3,3}, e_{3,4}\}$ is $1+\mathbf{z}$,  $E_3\cap H\in \{e_{3,1}, e_{3,2}\}$ is $2-\mathbf{z}$. 
The case with $|E_2\cap H|=1$ and $|E_3\cap H|=2$ is the same.

The number of \(\mathcal O\)-rows where singleton of \(H\) is $e_{i,3}$ or $e_{i,4}$ is
\[
(2-a)\cdot\binom32\cdot(1+\mathbf{z})+(2-a)\cdot\binom32\cdot(1+\mathbf{y})=3(2-a)(2+k),
\]
singleton of \(H\) is $e_{i,1}$ or $e_{i,2}$ is
\[
(2-a)\cdot\binom32\cdot(2-\mathbf{z})+(2-a)\cdot\binom32\cdot(2-\mathbf{y})=3(2-a)(4-k).
\]  
If the column disjoint from $H$ is \(E_2\) or \(E_3\), it is obvious that $|H\cap E_1|=1$. 
When $|H\cap E_3|=\emptyset$, the seed have $2-\mathbf{z}$ choices and $|H\cap E_2|=\emptyset$ is the same.
Hence, the number of \(\mathcal O\)-rows where singleton of \(H\) is $e_{i,3}$ or $e_{i,4}$ is
\[
a\cdot\binom32\cdot(2-\mathbf{z})+a\cdot\binom32\cdot(2-\mathbf{y})=3a(4-k),
\]
singleton of \(H\) is $e_{i,1}$ or $e_{i,2}$ is
\[
(2-a)\cdot\binom32\cdot(2-\mathbf{z})+(2-a)\cdot\binom32\cdot(2-\mathbf{y})=3(2-a)(4-k).
\]    
Adding the two contributions $3(2-a)(2+k)+3a(4-k)$ and $3(2-a)(4-k)+3(2-a)(4-k)$ gives exactly \eqref{eq:type-of-O-row}.

For the \(\mathcal S\)-rows, the seed lies in the singleton column of \(H\).  
In this case \(H\) must use the two singleton columns \(E_2, E_3\) and omit \(E_1\).  
If the singleton of \(H\) is $e_{i,3}$ or $e_{i,4}$, so there are
\[
2\cdot 2 \cdot \binom{3}{2}=12
\]
such rows. 
If the singleton of \(H\) is $e_{i,1}$ or $e_{i,2}$, the number of \(\mathcal S\)-rows is
\begin{align}\label{eq:low-S-rows}
    (1-\mathbf{y}) \binom21 \cdot \binom{3}{2}+(1-\mathbf{z})\binom21\cdot \binom{3}{2}=6(2-k). 
\end{align}

Recall the weights for \(T^0\)-rows:
\[
\begin{array}{c|cc}
 & \mathcal O\text{-row weight} & \mathcal S\text{-row weight}\\ \hline
\text{singleton of \(H\) is $e_{i,3}$ or $e_{i,4}$}
& \dfrac{25}{3} & 0\\[4pt]
\text{singleton of \(H\) is $e_{i,1}$ or $e_{i,2}$}
& \dfrac{23}{12} & \dfrac{77}{4}.
\end{array}
\]
Using \eqref{eq:type-of-O-row} and \eqref{eq:low-S-rows}, the contribution to coefficient of $m_{H_Q}$ is
\begin{align}\label{eq:423+4r-}
\frac{25}{3}(12+6a+6k-6ak)
+\frac{23}{12}\cdot 6(2-a)(4-k)
+\frac{77}{4}\cdot 6(2-k)
=
423+4a-\frac{177+77a}{2}\,k.
\end{align}
Combining \eqref{eq:150}, \eqref{eq:24}, \eqref{eq:28-6r-4k}, and \eqref{eq:423+4r-}, we get the coefficient of $m_Q$ is
\begin{align*}
150+24+(28-6a-4k)+\left(423+4a-\frac{177+77a}{2}k\right)
&=625-2a-\frac{185+77a}{2}k
\le 625,
\end{align*}
as \(a,k\ge0\).  Thus every spread-\(3\) quad \(H_Q\) receives load at most \(625\).

Therefore the weighted sum of all valid remaining inequalities gives
\[
300|T_\sigma|+144|P_\sigma|
\le
625\sum_{H_Q} m_{H_Q}
=
625|Q_\sigma|, 
\]
which completes the proof. 
\end{proof}

\section{Layer $3$: the $11$-board inequality}\label{sec:layer3}

In this section, we prove the following. 
\begin{theorem}\label{cor:layer3}
On every $19$-board, 
\[
\frac{36}{25}|T_1|\le |Q_2|, 
\]
where $T_1$ is the present triples of spread $1$
and $Q_2$ is the missing quads of spread $2$ in $V_{\tau}$.
\end{theorem}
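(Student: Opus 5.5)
Following the pattern of Layer~2, I will reduce Theorem~\ref{cor:layer3} to an inequality on $11$-boards and then prove that inequality by an explicit weighted sum of five-disjoint-trace obstructions. For a two-column subset $\pi\subseteq\tau$, let $V_\pi=R_{\F}\cup\bigcup_{i\in\pi}E_i$. Let $T_\pi$ be the present spread-$1$ triples in $V_\pi$, and let $Q_\pi$ be the missing spread-$2$ quads in $V_\pi$. A spread-$1$ triple lies in exactly $\binom{3}{1}=3$ such $11$-boards, while a spread-$2$ quad lies in exactly one. Hence it suffices to prove
\[
\frac{12}{25}|T_\pi|\le |Q_\pi|,\qquad\text{that is,}\qquad 12|T_\pi|\le 25|Q_\pi|,
\]
on every $11$-board. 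Summing this over the six choices of $\pi$ gives $\frac{36}{25}|T_1|\le|Q_2|$.

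Write $V_\pi=R_{\F}\cup E_1\cup E_2$. The two columns $E_3,E_4$ of $\tau$ outside $\pi$ are full present traces. Let $H$ be a spread-$1$ triple, and suppose $V_\pi\setminus H$ (after possibly deleting one unused vertex) contains two disjoint spread-$2$ quads $H_Q^1,H_Q^2$. Then $x_H\le m_{H_Q^1}+m_{H_Q^2}$. Otherwise $H$, $H_Q^1$, $H_Q^2$, $E_3$, $E_4$ would be five disjoint present traces, contradicting Lemma~\ref{lem:local-matching-r4}. In a second type of row I use a seed from Lemma~\ref{lem:seeds-r4} in the column missed by $H$ (or in the column of $H$ itself, if a seed level remains there), together with one spread-$2$ quad. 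This gives $x_H\le m_{H_Q}$ whenever the quad is disjoint from $H$ and the seed.

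I split the triples by their intersection with $R_{\F}$: a triple contains $0$, $1$, or $2$ vertices of $R_{\F}$, plus $3$, $2$, or $1$ vertices of one column respectively. For each class I count the rows, assign uniform weights so that every $H$ receives left-hand coefficient exactly $12$, and then count the load received by each spread-$2$ quad. Quads are classified by their type on $(R_{\F},E_1,E_2)$: a quad either contains one vertex of $R_{\F}$ and has column pattern $(2,1)$ or $(1,2)$, or contains no vertex of $R_{\F}$ and has pattern $(2,2)$, $(3,1)$, or $(1,3)$. The load must also be tracked by how many low levels $\{1,2\}$ the quad occupies in each column, since this affects which seeds are available. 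Seed rows are the strong ones: a single missing quad absorbs the whole coefficient. I would use them mainly for triples avoiding $R_{\F}$, because for those triples the seed in the other column is disjoint from $H$.

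The main obstacle is balancing the loads. For a triple inside a single column, say $H\subseteq E_1$, the complement has only one $E_1$ vertex. Quads disjoint from $H$ and from a seed $R_{\F}\cup\{e_{2,j}\}$ must therefore use $e_{1,u}$ together with three $E_2$ vertices. So the rows concentrate on a few quads of type $(1,3)$, and those quads could be overloaded. If uniform weights fail, I would introduce level-dependent weights, as was done for $T^0$ in Layer~2, split according to whether the relevant levels lie in $\{1,2\}$. Failing that, I would reduce to a small linear program with exact rational solution, solved per quad orbit; a small LP should suffice given the slack $25/12$.
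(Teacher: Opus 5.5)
Your reduction to $11$-boards and both kinds of rows match the paper's. However, the proposal stops exactly where the proof's content lies. You never bound the load on a spread-$2$ quad, and you leave open whether uniform weights overload the $(1,3)$ quads, offering level-dependent weights or an LP as fallbacks. As written this is a plan, not a proof.

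Several details of the setup also need correcting. Below, $(c,a,b)$ counts the vertices of a quad in $R_{\F},E_1,E_2$.
\begin{itemize}
\item A seed in the column of $H$ itself never yields a row with a spread-$2$ quad. If $H=E_1\setminus\{e_{1,u}\}$ with $u\in\{1,2\}$, then removing $H$ and $R_{\F}\cup\{e_{1,u}\}$ from $V_\pi$ leaves the present column $E_2$. So $H$, $R_{\F}\cup\{e_{1,u}\}$, $E_2,E_3,E_4$ are five disjoint present traces, and $x_H=0$ outright. Hence only $E_1\setminus\{e_{1,3}\}$ and $E_1\setminus\{e_{1,4}\}$, and their $E_2$ analogues, can be present among column-only triples.
\item Seed rows are unavailable for every triple meeting $R_{\F}$, since each seed contains all of $R_{\F}$.
\item Since $|V_\pi\setminus H|=8$, there is never an unused vertex to delete.
\item Your list of quad types omits $(c,a,b)=(2,1,1)$, that is, two vertices of $R_{\F}$ and one in each column. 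Such quads do occur in the decompositions for triples with $|H\cap R_{\F}|=1$.
\end{itemize}

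The missing count is short, and uniform weights suffice with room to spare.

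For column-only triples, charge each present $E_1\setminus\{e_{1,v}\}$, $v\in\{3,4\}$, weight $6$ on each of the two quads $\{e_{1,v}\}\cup(E_2\setminus\{e_{2,u}\})$, $u=1,2$. A spread-$2$ quad has at most one representation of this form or of the symmetric form with $E_1,E_2$ swapped. So column-only triples contribute load at most $6$ to any quad.

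For a triple meeting $R_{\F}$, count decompositions of $V_\pi\setminus H$ into two spread-$2$ quads. There are $20$ when $|H\cap R_{\F}|=1$ and $30$ when $|H\cap R_{\F}|=2$. This gives weights $\omega_1=3/5$ and $\omega_2=2/5$. The load these rows put on a quad of type $(c,a,b)$ is
\[
\sum_{h=1}^{2}\omega_h\binom{3-c}{h}\Bigl[\binom{4-a}{3-h}\mathbf 1_{h\ge a}+\binom{4-b}{3-h}\mathbf 1_{h\ge b}\Bigr].
\]
For the six types $(0,1,3),(0,2,2),(0,3,1),(1,1,2),(1,2,1),(2,1,1)$ this equals $9,\ 24/5,\ 9,\ 28/5,\ 28/5,\ 18/5$ respectively.

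So every quad carries total load at most $9+6=15<25$, giving $12|T_\pi|\le 15|Q_\pi|\le 25|Q_\pi|$. This is exactly the paper's computation scaled by $25$, where the loads are $225+150=375\le 625$.
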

For a two-column subset $\iota \subseteq \tau$, define the $11$-board $V_{\iota}:=R_{\F}\cup E_i\cup E_j$.
Let $T_{\iota}$ be the present triples of spread $1$ in this $11$-board
and $Q_{\iota}$ be the missing quads of spread $2$. 
\begin{theorem}\label{prop:11-board}
For every genuine $11$-board, we have 
\begin{equation}\label{eq:11-target}
        \frac{12}{25}|T_{\iota}|\le |Q_{\iota}|.
\end{equation}
\end{theorem}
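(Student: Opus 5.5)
The plan is to mimic the row-weighting argument used for Theorem~\ref{prop:15-board}, now on the $11$-board $V_\iota=R_{\F}\cup E_1\cup E_2$. I will then deduce Theorem~\ref{cor:layer3} by summation. A spread-$1$ triple meets exactly one column, so it lies in exactly $3$ of the $\binom42=6$ eleven-boards inside $V_\tau$. A spread-$2$ quad lies in exactly one. Summing \eqref{eq:11-target} over the six choices of $\iota$ therefore gives $3\cdot\frac{12}{25}|T_1|=\frac{36}{25}|T_1|\le |Q_2|$. So it suffices to prove the equivalent integer form $12|T_\iota|\le 25|Q_\iota|$.

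\textbf{Valid rows.} The two columns of $\tau$ outside $\iota$ are full present traces. Hence, by Lemma~\ref{lem:local-matching-r4}, $V_\iota$ cannot contain three pairwise disjoint present traces. This yields two kinds of valid inequalities for a spread-$1$ triple $H$ with present-indicator $x_H$:
\begin{itemize}
\item if $H_Q^1,H_Q^2$ are disjoint spread-$2$ quads covering $V_\iota\setminus H$, then $x_H\le m_{H_Q^1}+m_{H_Q^2}$;
\item if a seed $S$ from Lemma~\ref{lem:seeds-r4} is disjoint from $H$ and $Q=V_\iota\setminus(H\cup S)$ is a spread-$2$ quad, then $x_H\le m_Q$.
\end{itemize}

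\textbf{Classes of triples.} Write $a=|H\cap R_{\F}|\in\{0,1,2\}$ and, by symmetry, assume $H$ lies in column $E_1$.
\begin{itemize}
\item For $a=0$ we have $H\subseteq E_1$. Then $H$ avoids $R_{\F}$, so both seeds $R_{\F}\cup\{e_{2,t}\}$, $t\in\{1,2\}$, are available. Each gives the single-quad row with $Q=\{e_{1,u}\}\cup(E_2\setminus\{e_{2,t}\})$, where $e_{1,u}$ is the vertex of $E_1$ missed by $H$. I give each of these two rows weight $6$, so the left coefficient is $12$.
\item For $a\ge1$, no seed is disjoint from $H$. The complement has type $R_{\F}^{3-a}E_1^{1+a}E_2^4$, and I use all decompositions into two spread-$2$ quads. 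For $a=1$ these are the splits in which each quad gets one $R_{\F}$-vertex, one $E_1$-vertex and two $E_2$-vertices, or one quad takes both $R_{\F}$-vertices. For $a=2$ the single $R_{\F}$-vertex goes to one quad. Each row gets a uniform weight $12/(\#\text{rows})$.
\end{itemize}
The seed rows could instead be spread with weights depending on levels, as was done for $T^0$ in Layer~2, if balancing requires it.

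\textbf{Load check.} Next I compute, for each spread-$2$ quad $Q$, its load, classified by its type $R_{\F}^bE_1^cE_2^d$ with $c,d\ge1$, and verify that every load is at most $25$. A quad of type $E_1^1E_2^3$ receives at most $6$ from $a=0$ seed rows, since $H$ is forced to be $E_1\setminus Q$. It can also receive load from rows with $a\ge1$. Quads containing $R_{\F}$-vertices are hit only by rows with $a\ge1$.

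\textbf{Main obstacle.} I expect the difficulty to be the load on quads that are hit by many $a=1$ and $a=2$ rows. There are many such triples: $3\cdot 4\cdot\binom42$ with $a=1$ in each column, and similarly with $a=2$. If uniform weights overshoot $25$ on some quad type, I will first try to reweight by class, shifting mass toward the cheaper seed rows and toward quads of type $E_1^2E_2^2$. Failing that, I will exploit stability: present triples with $a\ge1$ form a down-set, and their presence forces present lower triples in $E_1$. This lets me charge several triples to one missing quad only at its forced multiplicity. In the worst case I would fall back to an exact finite check of the $11$-board, as in Layer~1.
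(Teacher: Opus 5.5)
Your rows and weights are exactly the paper's. If you rescale your left-hand coefficient $12$ to $300$, the seed-row weight $6$ becomes $150$, and the uniform weights $12/20$ and $12/30$ on the $20$ and $30$ two-quad decompositions become $15$ and $10$. (The paper also discards $E_1\setminus\{e_{1,1}\}$ and $E_1\setminus\{e_{1,2}\}$, which can never be present because of the seed $R_{\F}\cup\{e_{1,u}\}$ in the same column; your extra seed rows for these two triples are valid and harmless.)

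The gap is that you stop where the proof's content lies. You announce the load check but never do it, you anticipate it might exceed $25$, and you list fallbacks. As written, the proposal does not yet establish $12|T_\iota|\le 25|Q_\iota|$.

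The check does close, with room to spare. For a spread-$2$ quad $Q$, let $c=|Q\cap R_{\F}|$, $\alpha=|Q\cap E_1|$ and $\beta=|Q\cap E_2|$. Take a triple $H\subseteq R_{\F}\cup E_1$ with $h=|H\cap R_{\F}|\ge1$. It has a row containing $Q$ iff $H\cap Q=\emptyset$ and $V_\iota\setminus(H\cup Q)$ still meets $E_1$, i.e.\ iff $h\ge\alpha$. (It meets $E_2$ automatically, since $\beta\le 3$.) Hence the load on $Q$ from the two-quad rows is
\[
\sum_{h=1}^{2} w_h\binom{3-c}{h}\left[\binom{4-\alpha}{3-h}\mathbf 1_{h\ge\alpha}+\binom{4-\beta}{3-h}\mathbf 1_{h\ge\beta}\right],\qquad w_1=\tfrac{3}{5},\quad w_2=\tfrac{2}{5}.
\]
For $(c,\alpha,\beta)=(0,1,3),(0,3,1),(0,2,2),(1,1,2),(1,2,1),(2,1,1)$ this equals $9,\,9,\,\tfrac{24}{5},\,\tfrac{28}{5},\,\tfrac{28}{5},\,\tfrac{18}{5}$ respectively.

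The seed rows hit only quads of the form $\{e_{1,u}\}\cup(E_2\setminus\{e_{2,t}\})$ and their mirror images. Each such quad comes from a single pair $(u,t)$, so these rows add at most $6$. Every load is therefore at most $15<25$. You actually obtain $12|T_\iota|\le 15|Q_\iota|$, which is the paper's $300|T_\iota|\le 375|Q_\iota|$. None of your contingency plans (reweighting, stability charging, or a computer check) is needed.
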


\begin{proof}[Proof Theorem \ref{cor:layer3} through Theorem \ref{prop:11-board}]
Sum Theorem~\ref{prop:11-board} over the six two-column subboards inside the $19$-board.  
A spread-$1$ triple is seen in three $11$-boards, and a spread-$2$ missing quad is seen in exactly one.
\end{proof}

\begin{proof}[Proof of Theorem \ref{prop:11-board}]
Without loss of generality, write the $11$-board as $V_{\iota }=R_{\F}\cup E_1\cup E_2$.
For a spread-$2$ quad \( H_Q\subseteq V_{\iota}\), let \(m_{H_Q}\) be its missing indicator. For a spread-$1$ triple \(H\subseteq V_{\iota}\), let \(x_H\) be its present indicator.
We now bound the spread-$1$ triples. 

First consider triples \(H\) which are contained entirely in one selected column.  Suppose \(H\subseteq E_1\), \(|H|=3\).  If \(e_{1,1}\notin H\), then
\[
H, R_{\F}\cup\{e_{1,1}\},\ E_2,\ E_3,\ E_4
\]
are five pairwise disjoint present traces, contradicting Lemma~\ref{lem:local-matching-r4}.  
Similarly, it is impossible that \(e_{1,2}\notin H\).
Thus a present column-only triple in \(E_1\) must be one of $E_1\setminus\{e_{1,3}\}, E_1\setminus\{e_{1,4}\}$.
For \(v\in\{3,4\}\) and \(u\in\{1,2\}\), put
\[
Q^{E_1}_{v,u}:=\{e_{1,v}\}\cup (E_2\setminus\{e_{2,u}\}).
\]
If \(H=E_1\setminus\{e_{1,v}\}\) is present and \(Q^{E_1}_{v,u}\) is also
present, then
\[
H,\quad Q^{E_1}_{v,u},\quad  R_{\F}\cup\{e_{2,u}\},\quad E_3,\quad E_4
\]
are five pairwise disjoint present traces, again impossible.  
Hence
\[
x_{E_1\setminus\{e_{1,v}\}}\le m_{Q^{E_1}_{v,u}}
\qquad (v=3,4;\ u=1,2).
\]
Multiplying the two inequalities for \(u=1,2\) by \(150\) and summing
gives
\begin{align}\label{eq:A-and-150m}
    300x_{E_1\setminus\{e_{1,v}\}} \le 150m_{Q^{E_1}_{v,1}}+150m_{Q^{E_1}_{v,2}}.
\end{align}
The same argument with \(E_1\) and \(E_2\) interchanged gives, for
\(v\in\{3,4\}\),
\begin{align}\label{eq:B-and-150m}
    300x_{E_2\setminus\{e_{2,v}\}}\le 150m_{Q^{E_2}_{v,1}}+150m_{Q^{E_2}_{v,2}}.
\end{align}
where
\[
Q^{E_2}_{v,u}:=\{e_{2,v}\}\cup(E_1\setminus\{e_{1,u}\}).
\]
It remains to treat spread-$1$ triples which meet \( R_{\F}\). 
Let \(H\) be such a triple.  
Let \(\Pi(H)\) be the set of unordered decompositions
\[
V_{\iota}\setminus H=H_Q\cup H_Q'
\]
such that both \(H_Q\) and \(H_Q'\) are two disjoint spread-$2$ quads.  
For every \(\{H_Q, H_Q'\}\in\Pi(H)\), if \(H, H_Q, H_Q'\) were all present, then $H,\ H_Q,\ H_Q',\ E_3,\ E_4$ would be five pairwise disjoint present traces.  
Therefore
\begin{align}\label{eq:xH<mQ+mQ'}
    x_H\le m_{H_Q}+m_{H_{Q'}} \qquad \text{for every } \{H_Q, H_Q'\}\in\Pi(H).
\end{align}
We need only count how many such decompositions there are. 
By symmetry, it suffices to assume \(H\subseteq R_{\F}\cup E_1\).

If \(|H\cap  R_{\F}|=1\), then \(V_{\iota}\setminus H\) has type $R_{\F}^2E_1^2E_2^4$.
In a valid decomposition, each spread-$2$ quads must contain one of the two remaining \(E_1\)-vertices.  
Hence the number of unordered decompositions is
\[
\frac12\binom21
\left[
\binom41\binom22+\binom42\binom21+\binom43\binom20
\right]
=20.
\]
Multiplying each inequality in \eqref{eq:xH<mQ+mQ'} by \(15\) and summing over
\(\Pi(H)\), we get
\begin{align}\label{eq:300xH<15sumPi(H)}
    300x_H\le 15\sum_{\{H_Q, H_Q'\}\in\Pi(H)}(m_{H_Q}+m_{H_{Q'}}). 
\end{align}
If \(|H\cap R_{\F}|=2\), then \(V_{\iota}\setminus H\) has type $R_{\F}^1E_1^3 E_2^4$
The number of unordered decompositions is
\[
\frac12\left[
\binom31\left(\binom42\binom11+\binom43\binom10\right)
+
\binom32\left(\binom41\binom11+\binom42\binom10\right)
\right]
=30.
\]
Multiplying each inequality in \eqref{eq:xH<mQ+mQ'} by \(10\) and summing over
\(\Pi(H)\), we get
\begin{align}\label{eq:300xH<10sumPi(H)}
    300x_H\le 10\sum_{\{H_Q, H_Q'\}\in\Pi(H)}(m_{H_Q}+m_{H_{Q'}}). 
\end{align}

We now sum the inequalities \eqref{eq:A-and-150m}, \eqref{eq:B-and-150m}, \eqref{eq:300xH<15sumPi(H)}, and \eqref{eq:300xH<10sumPi(H)} over all spread-1 triples \(H\). 
We will now prove that each spread-$2$ quad $H_Q$ receives a total load of at most $375$.
For the triples meeting \(R_{\F}\), let
\[
a=|H_Q\cap E_1|,\qquad b=|H_Q\cap E_2|,  \qquad c=|H_Q\cap  R_{\F}|.
\]
This kind of triple load on \(H_Q\) is
\[
L_{\mathrm{mix}}(H_Q)=\sum_{h=1}^{2}w_h\binom{3-c}{h}
\left[\binom{4-a}{3-h}{\bf 1}_{h\ge a}+\binom{4-b}{3-h}{\bf 1}_{h\ge b}\right],
\]
where \(w_1=15\) and \(w_2=10\).  This formula simply chooses the \(h\) $R_{\F}$-vertices and the \(3-h\) vertices in the same selected column as \(H\), and the condition \(h\ge a\) or \(h\ge b\) ensures that the complementary quad still meets that selected column.
Evaluating this elementary expression for the six possible types of
\((c,a,b)\) gives
\[
\begin{array}{c|cccccc}
(c,a,b)     &(0,1,3)&(0,2,2)&(0,3,1)&(1,1,2)&(1,2,1)&(2,1,1)\\ \hline
L_{\mathrm{mix}}(Q)&225&120&225&140&140&90 .
\end{array}
\]

The column-only inequalities \eqref{eq:A-and-150m} and \eqref{eq:B-and-150m} add load only to
quads of the special forms
\[
\{e_{1,v}\}\cup(E_2\setminus\{e_{2,u}\})
\quad\text{or}\quad
(E_1\setminus\{e_{1,u}\})\cup\{e_{2,v}\},
\qquad v\in\{3,4\},\ u\in\{1,2\}.
\]
Each spread-$2$ quad has at most one such representation, so the
column-only part contributes at most \(150\) to any fixed quad.
Consequently the total load on every spread-2 quad is at most $225+150=375.$
Therefore,
\[
300|T_{\iota}|
=
300\sum_H x_H
\le
\sum_{H_Q} 375\,m_{H_Q}
=
375|Q_{\iota}|
\le
625|Q_{\iota}|,
\]
which completes the proof. 
\end{proof}

\section{Proof of Theorem~\ref{thm:main}}\label{sec:r4-board-input-proof}
In this section, we prove Theorem~\ref{thm:main}. 
For a fixed $19$-board $V_\tau$, recall that $T_z$, $P_z$, and $Q_z$ are the sets of present triples, present pairs, and missing quads of spread $z$, respectively. Summarizing Theorems  \ref{THM:Lay1-main-result}, \ref{cor:layer2} and \ref{cor:layer3} gives 
\begin{align}
 \frac{12}{25}|T_3|+\frac{144}{625}|P_2|&\le |Q_4|,\label{eq:L1}\\
 \frac{24}{25}|T_2|+\frac{432}{625}|P_1|&\le |Q_3|,\label{eq:L2}\\
 \frac{36}{25}|T_1|&\le |Q_2|.\label{eq:L3}
\end{align}
Also $|T_0|=|P_0|=0$ by \eqref{z(H)}.
At the critical values $n\in\{n_4(s)-1,n_4(s)\}$ and for $s\ge3481$, Lemma~\ref{lem:r4-explicit-threshold} gives
$\frac q{s-3}\le\frac{12}{25}$.
Recall that $q=n-(4s+3)$ and $w(H)=\frac{\binom q{4-|H|}}{\binom{s-z(H)}{4-z(H)}}$.
Since this weight depends only on $|H|$ and $z(H)$, we can write $w(T_z)$
and $w(P_z)$ for the common weights of triples and pairs of spread $z$.
By Lemma~\ref{lem:r4-explicit-threshold}, we have 
\begin{align*}
 w(T_3)=\frac q{s-3}&\le\frac{12}{25},
 &
 w(P_2)=\frac{\binom q2}{\binom{s-2}2}&\le\frac{144}{625},\\
 (s-3)w(T_2)=\frac{2q}{s-2}&\le\frac{24}{25},
 &
 (s-3)w(P_1)=\frac{3q(q-1)}{(s-1)(s-2)}&\le\frac{432}{625},\\
 \binom{s-2}{2}w(T_1)=\frac{3q}{s-1}&\le\frac{36}{25}.
\end{align*}
The matching missing-quad weights are $1$, $1/(s-3)$, and $1/\binom{s-2}{2}$ for spreads $4$, $3$, and
$2$, respectively.

\begin{theorem}\label{thm:weighted-board}
Let \(s\ge 3481\) and \(n\in\{n_4(s)-1,n_4(s)\}\).
For every $\tau\in \binom{[s]}{4}$, the corresponding \(19\)-board \(V_\tau\) satisfies
\[
 \sum_{\substack{H\subseteq V_\tau, \ H\in \F_0,\ \\ |H|=2,3}} w(H)
 \le  \sum_{\substack{H_Q\subseteq V_\tau,\ H_Q\notin \F_0\\ |H_Q|=4}} w(H_Q).
\]
\end{theorem}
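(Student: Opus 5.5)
The plan is to decompose both sides of the desired inequality by trace size and spread, and then add the three layer inequalities \eqref{eq:L1}--\eqref{eq:L3} after rescaling each by the matching missing-quad weight.

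\textbf{Left-hand side.} Every present trace $H\subseteq V_\tau$ with $|H|\in\{2,3\}$ has spread $1\le z(H)\le 4$ by \eqref{z(H)}.
\begin{itemize}
\item A triple has spread at most $3$ and a pair at most $2$, since each vertex outside $R_{\F}$ meets one column.
\item By \eqref{EQ:size-trace-F0-least-2}, together with the fact that $R_{\F}$ has only three vertices, no present trace lies inside $R_{\F}$.
\item Hence $T_0=P_0=\emptyset$, and a pair of spread $1$ may meet $R_{\F}$.
\end{itemize}
The left-hand side is therefore
\[
w(T_3)|T_3|+w(P_2)|P_2|+w(T_2)|T_2|+w(P_1)|P_1|+w(T_1)|T_1|.
\]

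\textbf{Right-hand side.} Every $4$-subset of $V_\tau$ has spread at least $1$, because it cannot lie inside $R_{\F}$. The right-hand side is therefore at least
\[
|Q_4|+\frac{|Q_3|}{s-3}+\frac{|Q_2|}{\binom{s-2}{2}},
\]
where missing quads of spread $1$ contribute nonnegatively and can be dropped. The quads of spread $4$ split into two kinds:
\begin{itemize}
\item wide quads, which take one vertex from each column and are counted in $Q_4$;
\item nothing else, since a spread-$4$ quad must contain exactly one vertex from each of the four columns and so cannot meet $R_{\F}$.
\end{itemize}
So $Q_4$ is exactly the set of missing spread-$4$ quads. Likewise $T_3$ and $P_2$ as defined in Layer $1$ are exactly the present wide triples and pairs, which I will confirm against the definition of ``wide''. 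In the same way, $Q_3,T_2,P_1$ and $Q_2,T_1$ as used in \eqref{eq:L2} and \eqref{eq:L3} are all spread-$3$ and spread-$2$ objects in $V_\tau$.

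\textbf{Combining.} Apply the weight estimates of Lemma~\ref{lem:r4-explicit-threshold}, which are valid for $s\ge 3481$ at both critical values of $n$.
\begin{itemize}
\item For spread $4$, the estimates give $w(T_3)|T_3|+w(P_2)|P_2|\le \frac{12}{25}|T_3|+\frac{144}{625}|P_2|\le |Q_4|$ by \eqref{eq:L1}.
\item For spread $3$, multiply \eqref{eq:L2} by $\frac{1}{s-3}$. Using $(s-3)w(T_2)\le\frac{24}{25}$ and $(s-3)w(P_1)\le\frac{432}{625}$, this yields $w(T_2)|T_2|+w(P_1)|P_1|\le |Q_3|/(s-3)$.
\item For spread $2$, multiply \eqref{eq:L3} by $1/\binom{s-2}{2}$ and use $\binom{s-2}{2}w(T_1)\le\frac{36}{25}$.
\end{itemize}
Summing the three bounds gives the claim.

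\textbf{Main obstacle.} The only step needing care is bookkeeping: the classes in the three layer theorems must exactly partition the present pairs and triples of $V_\tau$, and each missing quad must be counted in at most one layer with the correct weight $1/\binom{s-z}{4-z}$. Both follow from the fact that the weight depends only on $(|H|,z(H))$ and that the spread classes are disjoint.
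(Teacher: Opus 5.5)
Your proposal is correct and follows essentially the same route as the paper. Both arguments split both sides by size and spread, rescale \eqref{eq:L1}--\eqref{eq:L3} by the missing-quad weights $1$, $1/(s-3)$, $1/\binom{s-2}{2}$ using the estimates of Lemma~\ref{lem:r4-explicit-threshold}, sum the three bounds, and discard the nonnegative spread-$1$ missing quads. One small slip: the fact that no pair or triple trace lies inside $R_{\F}$ does not follow from \eqref{EQ:size-trace-F0-least-2}, since a $2$- or $3$-set fits inside the $3$-set $R_{\F}$; it follows from \eqref{z(H)}, which you had already cited: such a trace together with $\M$ would give $s+1$ disjoint traces.
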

\begin{proof}
Because the weight is constant on each size-and-spread class, the contribution
of the spread-$3$ triples and spread-$2$ pairs is bounded by
\[
\begin{aligned}
        w(T_3)\cdot|T_3|+w(P_2)\cdot|P_2|\le \frac{12}{25}|T_3|+\frac{144}{625}|P_2|\le |Q_4|,
\end{aligned}
\]
where the last inequality is~\eqref{eq:L1}.  
A missing spread-$4$ quad has weight $1$, so the right-hand side is exactly the total weight of the missing quads used at this layer.
For the next layer, the displayed weight estimates and~\eqref{eq:L2} give
\[
\begin{aligned}
        w(T_2)\cdot |T_2|+w(P_1)\cdot |P_1|\le \frac{1}{s-3}
        \left(\frac{24}{25}|T_2|+\frac{432}{625}|P_1|\right)
        &\le \frac{1}{s-3}|Q_3|.
\end{aligned}
\]
This again matches the missing-quad weight, since every missing spread-$3$
quad has weight $1/(s-3)$.
Similarly, after dividing~\eqref{eq:L3} by $\binom{s-2}{2}$, we obtain
\[
\begin{aligned}
        w(T_1)\cdot |T_1|\le \frac{1}{\binom{s-2}{2}}
        \left(\frac{36}{25}|T_1|\right)
        &\le \frac{1}{\binom{s-2}{2}}|Q_2|,
\end{aligned}
\]
which is the total weight of the missing spread-$2$ quads.
Hence,
\[
 \sum_{\substack{H\subseteq V_\tau, \ H\in \F_0,\ \\ |H|=2,3}} w(H)
 \le  |Q_4|+\frac{1}{s-3}|Q_3|+\frac{1}{\binom{s-2}{2}}|Q_2|.
\]
The right-hand side is the total weight of all missing quads of spreads
$2,3,4$. 
No quad has spread $0$, and the missing quads of spread $1$ contribute only nonnegative terms to the right-hand side. This completes the proof. 
\end{proof}

Let $\rho=\rho_4$ be the root of $\rho^4-(\rho-1)^4=256$; numerically,
$\rho=4.479166\ldots<112/25$. We now complete the proof of Theorem~\ref{thm:main}.

\begin{proof}[Proof of Theorem~\ref{thm:main}]
For an $n$-vertex stable maximal property-$\ONE$ $4$-graph, Theorem~\ref{thm:weighted-board}
says that, on every $19$-board, the total weight of the present pairs and
triples is at most the total weight of the missing quads.  Since the comparison
family $\mathcal A_4(n,s)$ contains every quad trace and no smaller trace,
this is exactly the local comparison~\eqref{eq:local-board-general}.  Thus the
finite-board input holds for every matching number $s\ge3481$ and
$n\in\{n_4(s)-1,n_4(s)\}$.

Theorem~\ref{lem:FRR2017} verifies Assumption~\ref{ass:link-general} for
$r=4$ with $s_{\rm link}(4)=33$.  
Lemma~\ref{lem:r4-explicit-threshold} gives
$n_4(s)-n_4(s-1)\ge2$ for every $s\ge3481$, so the difference hypothesis in
the finite-board criterion holds with $s_\Delta(4)=3481$.

It remains to verify the quantitative lower estimate for $n_4(s)$.  Direct expansion of
$\Delta_s(x)=\binom{x}{4}-\binom{x-s}{4}-\binom{4s+3}{4}$ gives
\[
24\Delta_s\left(\rho s+\frac72\right)
=8(3\rho^2-3\rho-47)s^3
 +\left(43\rho-\frac{395}{2}\right)s^2-2s<0,
\]
where the last inequality follows from $\rho<112/25$.  Because $n_4(s)$
is the first integer at which $\Delta_s$ is nonnegative, it follows that
\[
        n_4(s)-1\ge \rho s+\frac52.
\]
We may therefore take $\eta_4=5/2$ in
Theorem~\ref{thm:finite-board-criterion}.  Its constants are
\[
        c_0=\frac{2r-2}{\rho-1}=\frac6{\rho-1},
        \qquad
        c_1=\frac{r-1-\eta_4}{\rho-1}
             =\frac{1/2}{\rho-1}.
\]
Applying Theorem~\ref{thm:finite-board-criterion} with $s_0=3481$ now yields
\[
        m_4(n,s)=\max\{a_4(s), b_4(n,s)\}
\]
for every $n\ge4s+4$ and every integer $s$ satisfying
\[
s\ge
\left\lceil\frac{6\cdot3481+1/2}{\rho-1}\right\rceil
=6004.
\]
This proves the theorem.
\end{proof}

\section*{Acknowledgments}
J.H. was supported by the National Key R\&D Program of China (No.~2023YFA1010202).
X.L. was supported by the Excellent Young Talents Program (Overseas) of the National Natural Science Foundation of China.

\section*{Declaration on the use of AI}
The authors used OpenAI GPT-5.5 solely to generate the code for the computer-assisted components of Section~\ref{sec:layer1}. 
The initial ideas underlying the remaining mathematical arguments and
proofs were inspired by the work of Frankl, R\"odl, and Ruci\'nski \cite{FRR2017}, and the arguments and proofs were subsequently developed independently by the authors without the use of AI-assisted tools. 
Each author has carefully verified all mathematical content in the paper and accepts
full responsibility for its correctness.

\bibliographystyle{abbrv}
\bibliography{matching}

@article{AlonEtAl2012,
  author  = {Alon, Noga and Frankl, Peter and Huang, Hao and R{\"o}dl, Vojt{\v e}ch and Ruci{\'n}ski, Andrzej and Sudakov, Benny},
  title   = {Large matchings in uniform hypergraphs and the conjectures of {Erd{\H{o}}s} and {Samuels}},
  journal = {J. Combin. Theory Ser. A},
  volume  = {119},
  year    = {2012},
  pages   = {1200--1215}
}

@article{BDE1976,
  author  = {Bollob{\'a}s, B{\'e}la and Daykin, David E. and Erd{\H{o}}s, Paul},
  title   = {Sets of independent edges of a hypergraph},
  journal = {Quart. J. Math. Oxford Ser. (2)},
  volume  = {27},
  year    = {1976},
  pages   = {25--32}
}

@misc{CaoLiuZhang2026,
  author = {Cao, Mengyu and Liu, Hong and Zhang, Haixiang},
  title  = {A Near-Optimal Linear Range for the {Erd{\H{o}}s} Matching Conjecture},
  year   = {2026},
  note   = {arXiv:2608.19118}
}

@article{EKR1961,
  author  = {Erd{\H{o}}s, Paul and Ko, Chao and Rado, Richard},
  title   = {Intersection theorems for systems of finite sets},
  journal = {Quart. J. Math. Oxford Ser. (2)},
  volume  = {12},
  year    = {1961},
  pages   = {313--320}
}

@article{Erdos1965,
  author  = {Erd{\H{o}}s, Paul},
  title   = {A problem on independent {$r$}-tuples},
  journal = {Ann. Univ. Sci. Budapest. E{\"o}tv{\"o}s Sect. Math.},
  volume  = {8},
  year    = {1965},
  pages   = {93--95}
}

@article{ErdosGallai1959,
  author  = {Erd{\H{o}}s, Paul and Gallai, Tibor},
  title   = {On maximal paths and circuits of graphs},
  journal = {Acta Math. Acad. Sci. Hungar.},
  volume  = {10},
  year    = {1959},
  pages   = {337--356}
}

@article{FLM2012,
  author  = {Frankl, Peter and {\L}uczak, Tomasz and Mieczkowska, Katarzyna},
  title   = {On matchings in hypergraphs},
  journal = {Electron. J. Combin.},
  volume  = {19},
  number  = {2},
  year    = {2012},
  pages   = {Paper 42}
}

@article{Frankl2013,
  author  = {Frankl, Peter},
  title   = {Improved bounds for {Erd{\H{o}}s}' matching conjecture},
  journal = {J. Combin. Theory Ser. A},
  volume  = {120},
  year    = {2013},
  pages   = {1068--1072}
}

@article{Frankl2017,
  author  = {Frankl, Peter},
  title   = {On the maximum number of edges in a hypergraph with given matching number},
  journal = {Discrete Appl. Math.},
  volume  = {216},
  year    = {2017},
  pages   = {562--581}
}

@article{Frankl2017NewRange,
  author  = {Frankl, Peter},
  title   = {Proof of the {Erd{\H{o}}s} matching conjecture in a new range},
  journal = {Israel J. Math.},
  volume  = {222},
  year    = {2017},
  pages   = {421--430}
}

@article{FranklKupavskii2017,
  author  = {Frankl, Peter and Kupavskii, Andrey},
  title   = {Families with no {$s$} pairwise disjoint sets},
  journal = {J. Lond. Math. Soc. (2)},
  volume  = {95},
  number  = {3},
  year    = {2017},
  pages   = {875--894}
}

@article{FranklKupavskii2022,
  author  = {Frankl, Peter and Kupavskii, Andrey},
  title   = {The {Erd{\H{o}}s} Matching Conjecture and concentration inequalities},
  journal = {J. Combin. Theory Ser. B},
  volume  = {157},
  year    = {2022},
  pages   = {366--400}
}

@misc{FranklLuMaWu2026,
  author = {Frankl, Peter and Lu, Hongliang and Ma, Jie and Wu, Yuze},
  title  = {Towards the {Erd{\H{o}}s} matching conjecture for 4-uniform hypergraphs: stability and applications},
  year   = {2026},
  note   = {arXiv:2602.19230}
}

@article{FRR2017,
  author  = {Frankl, Peter and R{\"o}dl, Vojt{\v e}ch and Ruci{\'n}ski, Andrzej},
  title   = {A short proof of {Erd{\H{o}}s}' conjecture for triple systems},
  journal = {Acta Math. Hungar.},
  volume  = {151},
  year    = {2017},
  pages   = {495--509}
}

@article{HuangZhao2017,
  author  = {Huang, Hao and Zhao, Yi},
  title   = {Degree versions of the {Erd{\H{o}}s--Ko--Rado} theorem and {Erd{\H{o}}s} hypergraph matching conjecture},
  journal = {J. Combin. Theory Ser. A},
  volume  = {150},
  year    = {2017},
  pages   = {233--247}
}

@article{Han2016,
  author  = {Han, Jie},
  title   = {Perfect matchings in hypergraphs and the {Erd{\H{o}}s} matching conjecture},
  journal = {SIAM J. Discrete Math.},
  volume  = {30},
  number  = {3},
  year    = {2016},
  pages   = {1351--1357}
}

@article{LuczakM2014,
  author  = {{\L}uczak, Tomasz and Mieczkowska, Katarzyna},
  title   = {On {Erd{\H{o}}s}' extremal problem on matchings in hypergraphs},
  journal = {J. Combin. Theory Ser. A},
  volume  = {124},
  year    = {2014},
  pages   = {178--194}
}

@article {HLS2012,
    AUTHOR = {Huang, Hao and Loh, Po-Shen and Sudakov, Benny},
     TITLE = {The size of a hypergraph and its matching number},
   JOURNAL = {Combin. Probab. Comput.},
  FJOURNAL = {Combinatorics, Probability and Computing},
    VOLUME = {21},
      YEAR = {2012},
    NUMBER = {3},
     PAGES = {442--450},
      ISSN = {0963-5483,1469-2163},
   MRCLASS = {05D15 (05C65)},
  MRNUMBER = {2912790},
MRREVIEWER = {David\ J.\ Galvin},
       DOI = {10.1017/S096354831100068X},
       URL = {https://doi.org/10.1017/S096354831100068X},
}

@article {K1968,
    AUTHOR = {Kleitman, Daniel J.},
     TITLE = {Maximal number of subsets of a finite set no {$k$} of which
              are pairwise disjoint},
   JOURNAL = {J. Combinatorial Theory},
  FJOURNAL = {Journal of Combinatorial Theory},
    VOLUME = {5},
      YEAR = {1968},
     PAGES = {157--163},
      ISSN = {0021-9800},
   MRCLASS = {05.04},
  MRNUMBER = {228350},
MRREVIEWER = {H.\ V.\ Kronk},
}

@article {FRR2012,
    AUTHOR = {Frankl, Peter and R\"odl, Vojtech and Ruci\'nski, Andrzej},
     TITLE = {On the maximum number of edges in a triple system not
              containing a disjoint family of a given size},
   JOURNAL = {Combin. Probab. Comput.},
  FJOURNAL = {Combinatorics, Probability and Computing},
    VOLUME = {21},
      YEAR = {2012},
    NUMBER = {1-2},
     PAGES = {141--148},
      ISSN = {0963-5483,1469-2163},
   MRCLASS = {05D05 (05C65 05C70)},
  MRNUMBER = {2900053},
MRREVIEWER = {Zbigniew\ B.\ Lonc},
       DOI = {10.1017/S0963548311000496},
       URL = {https://doi.org/10.1017/S0963548311000496},
}

@article {GLM2022,
    AUTHOR = {Guo, Mingyang and Lu, Hongliang and Mao, Dingjia},
     TITLE = {A stability result on matchings in 3-uniform hypergraphs},
   JOURNAL = {SIAM J. Discrete Math.},
  FJOURNAL = {SIAM Journal on Discrete Mathematics},
    VOLUME = {36},
      YEAR = {2022},
    NUMBER = {3},
     PAGES = {2339--2351},
      ISSN = {0895-4801,1095-7146},
   MRCLASS = {05C70 (05C65 05C72)},
  MRNUMBER = {4487902},
MRREVIEWER = {Ioan\ Tomescu},
       DOI = {10.1137/21M1422720},
       URL = {https://doi.org/10.1137/21M1422720},
}

@article {LWY2022,
    AUTHOR = {Lu, Hongliang and Wang, Yan and Yu, Xingxing},
     TITLE = {Rainbow perfect matchings for 4-uniform hypergraphs},
   JOURNAL = {SIAM J. Discrete Math.},
  FJOURNAL = {SIAM Journal on Discrete Mathematics},
    VOLUME = {36},
      YEAR = {2022},
    NUMBER = {3},
     PAGES = {1645--1662},
      ISSN = {0895-4801,1095-7146},
   MRCLASS = {05C65 (05C35 05C70)},
  MRNUMBER = {4451911},
MRREVIEWER = {Anna\ A.\ Taranenko},
       DOI = {10.1137/21M1442383},
       URL = {https://doi.org/10.1137/21M1442383},
}

@article {LYY2021,
    AUTHOR = {Lu, Hongliang and Yu, Xingxing and Yuan, Xiaofan},
     TITLE = {Rainbow matchings for 3-uniform hypergraphs},
   JOURNAL = {J. Combin. Theory Ser. A},
  FJOURNAL = {Journal of Combinatorial Theory. Series A},
    VOLUME = {183},
      YEAR = {2021},
     PAGES = {Paper No. 105489, 21},
      ISSN = {0097-3165,1096-0899},
   MRCLASS = {05C70 (05C15 05C65)},
  MRNUMBER = {4275007},
MRREVIEWER = {Mark\ Rowland\ Budden},
       DOI = {10.1016/j.jcta.2021.105489},
       URL = {https://doi.org/10.1016/j.jcta.2021.105489},
}

@article {LWY2023,
    AUTHOR = {Lu, Hongliang and Wang, Yan and Yu, Xingxing},
     TITLE = {Co-degree threshold for rainbow perfect matchings in uniform
              hypergraphs},
   JOURNAL = {J. Combin. Theory Ser. B},
  FJOURNAL = {Journal of Combinatorial Theory. Series B},
    VOLUME = {163},
      YEAR = {2023},
     PAGES = {83--111},
      ISSN = {0095-8956,1096-0902},
   MRCLASS = {05C70 (05C07 05C65)},
  MRNUMBER = {4630904},
MRREVIEWER = {Vahan\ V.\ Mkrtchyan},
       DOI = {10.1016/j.jctb.2023.07.003},
       URL = {https://doi.org/10.1016/j.jctb.2023.07.003},
}

@article {GLMY2022,
    AUTHOR = {Gao, Jun and Lu, Hongliang and Ma, Jie and Yu, Xingxing},
     TITLE = {On the rainbow matching conjecture for 3-uniform hypergraphs},
   JOURNAL = {Sci. China Math.},
  FJOURNAL = {Science China. Mathematics},
    VOLUME = {65},
      YEAR = {2022},
    NUMBER = {11},
     PAGES = {2423--2440},
      ISSN = {1674-7283,1869-1862},
   MRCLASS = {05C65 (05D05)},
  MRNUMBER = {4499524},
MRREVIEWER = {Paola\ Bonacini},
       DOI = {10.1007/s11425-020-1890-4},
       URL = {https://doi.org/10.1007/s11425-020-1890-4},
}

\newpage

\appendix

\section{Machine-checkable certificate specifications}\label{app:certificate-specs}
This appendix describes the two computer-assisted components of the proof.  Section~ \ref{app:topstar-spec} reconstructs the top-star linear relaxation and checks an exact integer-scaled Farkas certificate.
Section~\ref{app:notopstar-spec} uses deterministic exhaustive searches to certify the exact minima or target-bounded lower bounds required in the no-top-star branch.  
The accompanying JSON files contain the corresponding certificate data or search specifications, and all proof-critical checks use exact arithmetic.

\subsection{Top-star Farkas system}\label{app:topstar-spec}
The following row types are the mathematical specification of the top-star branch relaxation.  Each row type is justified below as a necessary condition for a genuine top-star board. Put
\[
        Z:=\{x\in[4]^3:\text{at least one coordinate of }x\text{ is }1\}.
\]
\noindent\textbf{(R1)}~ For \(x\in [4]^3\setminus Z\), $y_x\le 0$; for \(x\in [4]^3\) with \(x_i>1\),
$y_x\le y_{x-e_i}$, where $e_i$ denotes the $i$-th standard basis vector. 

\textbf{Note:}~The first inequality follows from \(S^+_4\subseteq Q_4\), and the second follows from stability.

\medskip
\noindent\textbf{(R2)}~If $(x,t)\le (x',t')$ coordinatewise with $t,~t'\in\{1,2,3\}$, then $m_{x,t}\le m_{x',t'}$.

If $(x,t)\le (x',4)$ coordinatewise with $t\in\{1,2,3\}$, then $m_{x,t}+y_{x'}\le 1$.

\textbf{Note:}~These rows express the up-set property of $Q_4$, which is
equivalent to stability of the present wide-quad traces.

\medskip
\noindent\textbf{(R3)}~$\ell\le c+2$ is written in variables as
\[
        \sum_{x\in [4]^3}\sum_{t=1}^3 m_{x,t}
        -
        \sum_{x\in [4]^3} y_x
        \le 2.
\]

\medskip

\noindent\textbf{(R4)}~For every triple support \(I\), if \(\mathbf{v}, \mathbf{v}'\in [4]^3\) and \(\mathbf{v}'\le \mathbf{v}\)
coordinatewise, then
\[
        t_{I,\mathbf{v}}\le t_{I\mathbf{,}\mathbf{v}'}.
\]
For every pair support \(J\), if \(\mathbf{u}, \mathbf{u}'\in[4]^2\) and \(\mathbf{u}'\le \mathbf{u}\) coordinatewise, then
\[
        p_{J, \mathbf{u}}\le p_{J,\mathbf{u}'}.
\]
\textbf{Note:}~These rows again follow from stability.

\medskip

\noindent\textbf{(R5)}~Let \(H\) be a supported pair or triple trace, and let \(z_H\) be its indicator,
and let \(Q\supset H\) be a wide quad extending the corresponding trace.

If \(Q=(x,4)\), then
\[
        z_H\le y_x.
\]
If \(Q=(x,t)\), \(t\in\{1,2,3\}\), then
\[
        z_H+m_{x,t}\le 1.
\]
\textbf{Note:}~By Lemma~\ref{fact:closure-general}, every wide quad containing a present trace of size two or three is present.

\medskip

\noindent\textbf{(R6)}~For every pair support \(J\), and for every two levels
$\mathbf{a}=(a_1,a_2)$, $\mathbf{b}=(b_1,b_2)\in[4]^2$ satisfying $a_1\ne b_1$, $a_2\ne b_2$, and $\{a_r,b_r\}\ne\{1,2\}$ for some $r\in\{1,2\}$, we impose
\[
        p_{J, \mathbf{a}}+p_{J, \mathbf{b}}\le 1.
\]
\textbf{Note:}~If both pair traces were present, then seed $R_{\F}\cup \{e_{i,{1~\text{or}~2}}\}$ where $i$ is a column related to $a_r$, together with the two full columns outside \(J\), would give five pairwise disjoint traces, contradicting
Lemma~\ref{lem:local-matching-r4}.

\medskip

\noindent\textbf{(R7)}~For every triple support \(I\), and for every three levels
\(\mathbf{a}, \mathbf{b}, \mathbf{c}\in [4]^3\) such that in every coordinate the three used levels are pairwise distinct and in at least one coordinate the unused level is \(1\) or \(2\), we impose
\[
        t_{I,\mathbf{a}}+t_{I,\mathbf{b}}+t_{I,\mathbf{c}}\le 2.
\]
\textbf{Note:}~If all three triples were present, then the corresponding seed and the fourth full column would give five pairwise disjoint traces, contradicting Lemma~\ref{lem:local-matching-r4}.

\medskip
\noindent\textbf{(R8)}~Let \(H\) be a supported pair or triple trace, with indicator \(z_H\).  Choose a coordinate \(j\in\{1,2,3,4\}\) and a level \(u\in\{1,2\}\) not used by \(H\) in the coordinate \(j\). The seed $R_{\F}\cup\{e_{j,u}\}$ is present by Lemma~\ref{lem:seeds-r4}.  

Let \(R=\{Q_1, Q_2, Q_3\}\) be three pairwise disjoint wide quads, all disjoint from both \(H\) and this seed. Write $Q_i=(x_i, t_i)\in [4]^3\times\{1,2,3,4\}$, where \(t_i\) is the fourth coordinate, i.e. the top-star direction. Put
\[
R_{\rm top}:=\{i:t_i=4\},\qquad
R_{\rm low}:=\{i:t_i\le 3\}.
\]
For \(i\in R_{\rm top}\), the variable \(y_{x_i}\) records whether the top-slice quad \(Q_i=(x_i,4)\) is present. For \(i\in R_{\rm low}\), the variable \(m_{x_i,t_i}\) records whether the lower-slice quad
\(Q_i=(x_i,t_i)\) is missing.
We impose
\[
z_H+\sum_{i\in R_{\rm top}} y_{x_i}
   -\sum_{i\in R_{\rm low}} m_{x_i,t_i}
\le |R_{\rm top}|.
\]
Equivalently, $z_H\le \sum_{i\in R_{\rm top}}(1-y_{x_i})+\sum_{i\in R_{\rm low}}m_{x_i,t_i}$.

(For rows of type \textbf{(R8)}, the verifier uses the fixed cyclic subfamily generated by \texttt{residual\_three\_quads}: after ordering the three residual quads by their first coordinates, it uses the three cyclic alignments in each of the remaining
coordinates.  
Every generated row is a valid instance of {\rm (R8)}.  Since this gives a weaker relaxation, proving the objective bound for this subfamily is sufficient for every
genuine board.)

\textbf{Note:}~The case of $z_H=0$ is obvious. For \(z_H=1\), this inequality is equivalent to saying that at least one of the three remaining quads \(Q_1,Q_2,Q_3\) is missing:
\[
        \sum_{(x,4)\in R_{\rm top}}(1-y_x)
        +
        \sum_{(x,t)\in R_{\rm low}}m_{x,t}
        \ge 1.
\]
If this failed, then \(H\), the seed, and all three quads \(Q_1,Q_2,Q_3\) would be pairwise disjoint present traces in the same \(19\)-board, contradicting Lemma~\ref{lem:local-matching-r4}.

\medskip

\noindent\textbf{(R9)}~For every pair support \(J\),
\[
        \sum_{\mathbf{u}\in[4]^2} p_{J, \mathbf{u}}\le 4,
\]
and for every triple support \(I\),
\[
        \sum_{\mathbf{v}\in[^3} t_{I, \mathbf{v}}\le 32.
\]
\textbf{Note:}~The first bound is Lemma~\ref{lem:pair-ferrers}; the second is Lemma~\ref{lem:triple-ferrers}.

\medskip

\noindent\textbf{(R10)}
The branch set is
\[
        \mathcal B
        :=
        \{c:0\le c\le 37,\ c\ne 23\}
        \cup
        \{(23,\ell):0\le \ell\le 25\}.
\]
For every branch, \(c\) is fixed by
\[
\sum_{x\in[4]^3}y_x\le c,
\qquad
-\sum_{x\in[4]^3}y_x\le -c.
\]

For the down-set \(C^{\rm top}\subseteq V_3\) of size \(c\), 
\[
 \text{if}~~|\downarrow_Z x=\{z\in Z:z\preceq x\}|>c,~~\text{then we add}~~y_x\le0.
\]
and 
\[
 \text{if}~~|\uparrow_Z x=\{z\in Z:z\succeq x\}|>|Z|-c=37-c,~~\text{then we add}~~-y_x\le -1.
\]

In the exceptional branch \(\beta=(23,\ell)\), the lower missing size is
also fixed by
\[
\sum_{x\in[4]^3}\sum_{t=1}^3 m_{x,t}\le\ell,
\qquad
-\sum_{x\in[4]^3}\sum_{t=1}^3 m_{x,t}\le-\ell.
\]

Let $L:=[4]^3\times\{1,2,3\}$. The lower missing family \(N\subseteq L\) is an up-set of size \(\ell\).
For \(q\in L\),
\[
 \text{if}~~|\uparrow_L q=\{r\in L:q\preceq r\}|>\ell,~~\text{then we add}~~m_q\le0.
\]
and 
\[
 \text{if}~~|\downarrow_L q:=\{r\in L: q\succeq r\}|>|L|-\ell=192-\ell,~~\text{then we add}~~-m_q\le -1.
\]
\textbf{Note:}~A down-set of size \(c\) cannot contain a point whose principal down-set has size larger than \(c\), and it must contain a point whose principal up-set is too large to be avoided by a set of size \(c\).
The verifier 
\[
\texttt{src/topstar.py}
\]
reconstructs the finite linear relaxation used by the certificate from the row rules \textup{(R1)}--\textup{(R10)}. The variable vector is
\[
X=(y,m,t,p),
\]
where \(y\) records present top-slice quads, \(m\) records missing lower quads, \(t\) records present supported triples, and \(p\) records present supported pairs. The objective vector \(d\) has coefficients
\[
625 \text{ on } y,\qquad -625 \text{ on } m,\qquad
300 \text{ on } t,\qquad 144 \text{ on } p.
\]
Thus
\[
d^T X
= 625\sum_x y_x -625\sum_{x,t}m_{x,t}+300\sum_{I,\mathbf{v}}t_{I,\mathbf{v}}+144\sum_{J,\mathbf{u}}p_{J,\mathbf{u}}.
\]

For every branch
\[
\beta\in
\{c:0\le c\le 37,\ c\ne 23\}
\cup
\{(23,\ell):0\le \ell\le 25\},
\]
the verifier regenerates the matrix inequality
\[
A_\beta X\le b_\beta
\]
from the mathematical definitions. The verifier does not read \(A_\beta\), \(b_\beta\), or \(d\) from the certificate file. Instead, it reconstructs them from the row rules \textup{(R1)}--\textup{(R10)}. The certificate file supplies only the Farkas data \(D_\beta,\lambda_\beta,\mu_\beta\).
The file 
\[
\texttt{certificates/topstar\_farkas.json}
\]
contains, for each branch \(\beta\), a positive integer \(D_\beta\), nonnegative row
multipliers \(\lambda_\beta\), and nonnegative upper-bound multipliers
\(\mu_\beta\). The verifier checks
\[
A_\beta^T\lambda_\beta+\mu_\beta\ge D_\beta d
\tag{F1}
\]
coordinatewise and
\[
b_\beta^T\lambda_\beta+\mathbf 1^T\mu_\beta\le 40000D_\beta .
\tag{F2}
\]
Equivalently, it computes
\[
g_\beta=A_\beta^T\lambda_\beta+\mu_\beta-D_\beta d,
\qquad
h_\beta=40000D_\beta-b_\beta^T\lambda_\beta-\mathbf 1^T\mu_\beta
\]
and checks that all coordinates of \(g_\beta\) and all \(h_\beta\) are
nonnegative integers.

The submitted run reports
\[
\texttt{topstar\_certificate\_ok=True},
\quad
\texttt{branches=63},
\quad
\texttt{min\_gap=0},
\quad
\texttt{min\_rhs\_gap=386598099}.
\]
Hence \textup{(F1)} and \textup{(F2)} hold for every branch, and the usual
Farkas-duality argument gives \(d^T X\le 40000\), which is precisely
\[
300|T_3|+144|P_2|\le 625|Q_4|.
\]

\subsection{No-top-star finite universe}\label{app:notopstar-spec}

The no-top-star branch uses the universe $V_4\coloneqq [4]^4$ and the upper
cube $U\coloneqq\{2,3,4\}^4$.  
The missing wide-quad set is an up-set $Q_4\subseteq V_4$, and the present upper cube is $C\coloneqq U\setminus Q_4$.
The finite theorem uses pair trace sets $P_J\subseteq [4]^2$ for $J\in\binom{[4]}2$ and triple trace sets $T_I\subseteq [4]^3$ for
$I\in\binom{[4]}3$.

The finite no-top-star system consists of the following exact conditions.
\begin{enumerate}[label=(N\arabic*)]
\item $Q_4$ is an up-set in $V_4$.
\item $C$ contains no upper $3$-matching.  Equivalently, for every choice of
permutations $\pi_1,\pi_2,\pi_3$ of $\{2,3,4\}$, at least one of the three
points $(t,\pi_1(t),\pi_2(t),\pi_3(t))$, $t=2,3,4$, lies in $Q_4$.
\item $Q_4\cap U$ contains no full top-star, equivalently none of
$(4,2,2,2)$, $(2,4,2,2)$, $(2,2,4,2)$, $(2,2,2,4)$ lies in $Q_4$.
\item Each $P_J$ is a down-set avoiding the forbidden pair obstruction
from (R6), and each $T_I$ is a down-set avoiding the bad triple
configuration from (R7).
\item Closure and remaining seed hitting hold.  If a trace $H$ is present, then
every wide quad extension of $H$ lies outside $Q_4$.  For every seed disjoint from $H$, every remaining 3-matching $R=(H_{Q_1}, H_{Q_2}, H_{Q_3})$ disjoint from $H$ and the seed must be hit by $Q_4$, i.e.$\{H_{Q_1}, H_{Q_2}, H_{Q_3}\}\cap Q_4 \neq \emptyset$.
\end{enumerate}

We briefly recall why every genuine no-top-star board satisfies
(N1)--(N5).  Condition (N1) is exactly stability of the present wide quads.
Condition (N2) is the assumption that the present upper cube has no upper
3-matching.  Condition (N3) is the no-top-star assumption.  
The conditions in (N4) again follow from stability, and the forbidden pair and
bad triple obstructions follow from the local five-trace obstruction:
otherwise the offending pair or triple traces, together with a seed from
Lemma~\ref{lem:seeds-r4} and the unused full columns, would give five pairwise disjoint present traces in the same $19$-board, contradicting Lemma~\ref{lem:local-matching-r4}.  
Finally, completion closure in (N5) is Lemma~\ref{fact:closure-general}, and remaining seed hitting follows because if \(H,S,H_{Q_1}, H_{Q_2}, H_{Q_3}\) were all present and pairwise disjoint, they would again form five pairwise disjoint present traces, contradicting Lemma~\ref{lem:local-matching-r4}.

The finite conditions are exactly \((N1)\)--\((N5)\).
The remaining seed matching generator is used to encode the local
five-trace obstruction. Fix a supported trace \(H\) and a seed \(S\) disjoint from \(H\). In coordinate \(j\), let $A_j\subseteq\{1,2,3,4\}$ be the set of levels already occupied by \(H\) and \(S\) in that coordinate.

A remaining triple \(R=(H_{Q_1}, H_{Q_2}, H_{Q_3})\) of wide quads is disjoint from both \(H\) and \(S\), and is pairwise disjoint, if and only if for every coordinate \(j\), the three levels
\[
        H_{Q_1}(j),~H_{Q_2}(j),~H_{Q_3}(j)   
\]
are three distinct elements of $\{1,2,3,4\}\setminus A_j$. The verifier generates these triples coordinate by coordinate
and imposes that each such triple is hit by $Q_4$.
We next record the conclusions supplied by the finite verifiers.

\begin{conclusion}\label{Con:no-topstar-upper-blocker}
   Every finite configuration satisfying (N1)--(N3) has
\[
        |Q_4\cap U|\ge 33.
\]
\end{conclusion}

\begin{proof}[Certificate description]
This is checked by the source file
\[
        \texttt{src/no\_topstar\_upper\_blocker.cpp}.
\]

By \(U=\{2,3,4\}^4\), (N3) becomes the four forced inclusions, which we call these the forced unit-top conditions,
\[
        (4,2,2,2),\quad
        (2,4,2,2),\quad
        (2,2,4,2),\quad
        (2,2,2,4)
        \in C.
\]
The verifier enumerates down-sets $ C\subseteq\{2,3,4\}^4$ by \(3\times3\times3\) height matrices. Namely, for each $(a,b,c)\in\{2,3,4\}^3$, because \(C\) is a down-set,
\[
        (a,b,c,d)\in C
        \quad\Longleftrightarrow\quad
        2\le d<2+h[a,b,c]~\text{where}~h[a,b,c]\in\{0,1,2,3\}.
\]
The verifier first checks an explicit down-set \(C\) of size \(48\) satisfying the forced unit-top conditions and containing no upper \(3\)-matching.
It then exhaustively enumerates all monotone \(3\times3\times3\) height matrices representing down-sets \(C\subseteq\{2,3,4\}^4\) of size at least \(49\) satisfying the forced unit-top inclusions, and verifies that each such candidate contains an upper \(3\)-matching. Hence no admissible
candidate has size at least \(49\), so \(|C|\le48\) (hence $|Q_4\cap U|=|U|-|C|\ge 81-48=33$).
The successful run reports
\[
        \texttt{max\_C=48},\qquad
        \texttt{min\_Q4U=33},\qquad
        \texttt{exact\_upper\_blocker\_branch\_bound\_ok=True}.
\]
\end{proof}

When $|Q_4|=m$, let \(p(m)\) be an upper bound for the number of present pair traces on one fixed pair support, 
and let \(t(m)\) be an upper bound for the number of present triple traces on one fixed triple support.
\begin{conclusion}\label{lem:no-topstar-trace-thresholds}
    Put \(m:=|Q_4|\). If $33\le m\le 63$, then every pair support \(J\) and every triple support \(I\) satisfy
\[
        |P_J|\le p(m),
        \qquad
        |T_I|\le t(m),
\]
where \(p(m)\) and \(t(m)\) are given in Table~\ref{tab:no-topstar-thresholds}.
\begin{table}[h]
\[
\begin{array}{c|cccccccc}
m
&33&34\text{--}35&36\text{--}41&42\text{--}47
&48\text{--}49&50\text{--}55&56\text{--}59&60\text{--}63\\
\hline
p(m)
&0&0&0&0&0&0&0&1\\
t(m)
&0&1&4&7&25&26&29&29
\end{array}
\]
\caption{Per-support trace bounds in the no-top-star threshold range.}
\label{tab:no-topstar-thresholds}
\end{table}
\end{conclusion}
\begin{proof}[Certificate description]
The certificate file
\[
        \texttt{certificates/no\_topstar\_threshold.json}
\]
contains the threshold records for the \(10\) pair-trace orbits and the
\(20\) triple-trace orbits, where the orbit means modulo permutations of the selected coordinates: the pair-trace representatives are the unordered pairs \(1\le a\le b\le 4\), giving \(\binom{4+2-1}{2}=10\), and the triple-trace representatives are the unordered triples
\(1\le a\le b\le c\le 4\), giving \(\binom{4+3-1}{3}=20\).


For each supported trace orbit \(H\), the C++ solver
\[
        \texttt{src/no\_topstar\_search.cpp}.
\]
certifies an integer threshold \(\mu(H)\) such that every up-set \(Q'_4\subseteq[4]^4\) satisfying \textnormal{(N1)}--\textnormal{(N3)}, the completion closure for \(H\), and all remaining seed-hitting constraints for \(H\) satisfies
\[
|Q'_4|\ge \mu(H).
\]
For records marked \texttt{exact}, the value \(\mu(H)\) is the true minimum; for the remaining records, only the stated lower bound is asserted.  
Consequently, if \(H\) is present in a feasible configuration with \(|Q_4|=m\), then necessarily \(\mu(H)\le m\).  
The Python verifier
\[
        \texttt{src/no\_topstar.py}
\]
combines this necessary condition with the legal down-set conditions in (N4), yielding
the per-support bounds \(p(m)\) and \(t(m)\) in Table~\ref{tab:no-topstar-thresholds}.

This use of \(\mu(H)\) is one-sided and therefore safe: the verifier only
uses individual trace feasibility as a necessary condition.  Allowing all
traces that are individually feasible can only enlarge the set of
admissible trace families, so the resulting levels $( p(m),t(m))$ are valid
upper bounds for genuine no-top-star boards.
The verifier checks \(30\) threshold cases, corresponding to the
\(10+20\) orbit representatives.  The low-range arithmetic is then the
direct integer check
\[
        300\cdot 4t(m)+144\cdot 6p(m)\le 625m,
        \qquad 33\le m\le 63.
\]
\end{proof}

\begin{conclusion}\label{lem:no-topstar-critical-range}
Put \(m:=|Q_4|\). Suppose that $64\le m\le 66$.
If some support is extremal, meaning that either
\[
        |P_J|=4
        \quad\text{for some }J\in\binom{[4]}2,
\]
or
\[
        |T_I|=32
        \quad\text{for some }I\in\binom{[4]}3,
\]
then
\[
        300|T_3|+144|P_2|-625|Q_4|\le -10624.
\]
\end{conclusion}
\begin{proof}[Certificate description]
The critical-range certificate treats the range \(64\le |Q_4|\le 66\).
The critical-pattern part of
\[        \texttt{certificates/no\_topstar\_threshold.json}
\]
contains nine exact pattern records, all checked by
\[
        \texttt{src/no\_topstar\_search.cpp}.
\]
Up to permutation of the four coordinates, these records are the following

On a pair support \((i,j)\), the threshold certificate gives $\mu(1,3), \mu(1,4)\ge 67$, thus the only possible extremal pair pattern is
\[
        \{1,2\}\times\{1,2\}.
\]
On a triple support \((i,j,k)\), with omitted coordinate \(\ell\), the
extremal triple slabs are
\[
        \{(a,b,c)\in[4]^3:a\le 2\},
        \qquad
        \{(a,b,c)\in[4]^3:b\le 2\},
        \qquad
        \{(a,b,c)\in[4]^3:c\le 2\}.
\]
The mixed triple equality pattern is the down-set $\{(a,b,c): 1\le c \le h_{ab}\}$, with height matrix
\[
        h=(h_{ab})_{a,b=1}^{4}
        \begin{pmatrix}
        4&4&2&2\\
        4&4&2&2\\
        2&2&0&0\\
        2&2&0&0
        \end{pmatrix},
\]
which comes from the equality case in the proof of Lemma~\ref{lem:triple-ferrers}: after decomposing \([4]^3\) by \(S=\{1,2\}\) and \(L=\{3,4\}\), the non-slab equality model is
\(S^3\cup(L\times S\times S)\cup(S\times L\times S)\cup(S\times S\times L)\), whose size is \(32\).
Here, for two distinct coordinates \(a,b\in[4]\), we write
\[
R_{ab}:=\{q=(q_1,q_2,q_3,q_4)\in\{1,2,3,4\}^4:
q_a\ge3,\ q_b\ge3\}.
\]
Thus \(R_{ab}\) is the high-level rectangle of wide quads whose \(a\)-th and
\(b\)-th coordinates both lie in the upper two levels \(\{3,4\}\). 
For the pair square, the solver proves minimum \(|Q_4|=64\), and under
\(|Q_4|\le66\) it forces $     R_{ij}\subseteq Q_4$.
For the three triple slabs, it proves minimum \(|Q_4|=64\), and under
\(|Q_4|\le66\) it respectively forces
\[
        R_{i\ell}\subseteq Q_4,
        \qquad
        R_{j\ell}\subseteq Q_4,
        \qquad
        R_{k\ell}\subseteq Q_4.
\]
For the mixed triple equality pattern, the current certificate record is
\[
        \texttt{triple\_mixed\_min80},
        \qquad
        \texttt{--pattern triple\_mixed --target 81},
        \qquad
        \texttt{expect: best=80}.
\]
Thus every feasible configuration realizing this pattern has \(|Q_4|\ge 80\).  Hence this pattern
cannot occur in the critical range \(|Q_4|\le66\).
The rectangle forcing is certified by contradiction.  
The negation \(R_{ab}\nsubseteq Q_4\) is equivalent to declaring this minimal point (\(q_a=q_b=3\) and all other coordinates \(1\)) to lie outside \(Q_4\).  The solver adds this negation to the finite system and proves the target-bounded search finds no feasible solution with \(|Q_4|\le66\).
Hence the rectangle containment \(R_{ab}\subseteq Q_4\) is forced.

After a rectangle \(R_{ij}\subseteq Q_4\) is forced, the Python
rectangle-extension audit enumerates all up-set extensions of the six
rectangles \(R_{ij}\) of total size at most \(66\). There are \(72\)
cases. For each cases, the verifier recomputes the maximum legal trace contribution using \((N4)\)--\((N5)\). The worst case has
\[
        |Q_4|=64,\qquad |T_3|=96,\qquad |P_2|=4,
\]
and gives
\[
        300\cdot 96+144\cdot 4-625\cdot 64=-10624<0.
\]
Therefore every extremal-support configuration in the range
\(64\le |Q_4|\le66\) satisfies the claimed bound.
The submitted verifier run reports
\[
\begin{gathered}
\texttt{no\_topstar\_certificate\_ok=True},\\
\texttt{trace\_threshold\_checks=30},\qquad
\texttt{critical\_pattern\_checks=9},\\
\texttt{rectangle\_cases=72},\qquad
\texttt{rectangle\_max\_case=(64,96,4,-10624)}.
\end{gathered}
\]
\end{proof}

Together with the top-star verifier, the top-level command
\[
        \texttt{python3 run\_all.py}
\]
finishes with
\[
        \texttt{topstar\_no\_topstar\_certificate\_ok=True}.
\]

\end{document}